\newtheorem{theorem}{Theorem}[section]
\newtheorem{lemma}[theorem]{Lemma}
\newtheorem{corollary}[theorem]{Corollary}
\theoremstyle{remark}
\newtheorem{remark}[theorem]{Remark} 
\newtheorem{example}[theorem]{Example}
\numberwithin{equation}{section}
\newcommand {\rn}{{\mathrm n}}
\newcommand {\rb}{{\mathrm b}}
\newcommand{\kX}{{\mathcal X}}
\newcommand{\kE}{{\mathcal E}}
\newcommand{\kY}{{\mathcal Y}}
\newcommand{\kB}{{\mathcal B}}
\newcommand{\kC}{{\mathcal C}}
\newcommand{\kV}{{\mathcal V}}
\newcommand{\kW}{{\mathcal W}}
\newcommand{\kD}{{\mathcal D}}
\newcommand{\kL}{{\mathcal L}}
\newcommand{\kH}{{\mathcal H}}
\newcommand{\kU}{{\mathcal U}}
\newcommand{\kI}{{\mathcal I}}
\newcommand{\kZ}{{\mathcal Z}}
\newcommand{\kCb}{\kC_{\operatorname{b}}}
\newcommand  {\D}{{\mathrm D}}
\renewcommand{\d}{{\mathrm d}}
\renewcommand{\i}{{\mathrm i}}
\newcommand  {\e}{{\mathrm e}}
\renewcommand{\a}{{\mathsf a}}
\newcommand {\mS}{{\mathsf S}}
\renewcommand{\b}{{\mathsf b}}
\newcommand  {\1}{\mathbbm{1}}
\newcommand  {\R}{{\mathbb R}}
\newcommand  {\N}{{\mathbb N}}
\newcommand  {\C}{{\mathbb C}}
\newcommand  {\Z}{{\mathbb Z}}
\renewcommand{\P}{{\mathbb P}}
\newcommand  {\Q}{{\mathbb Q}}
\newcommand {\id}{\operatorname{id}}
\newcommand {\spec}{\operatorname{spec}}
\renewcommand {\Re}{\operatorname{Re}}
\newcommand{\dist}{\operatorname{dist}}
\newcommand{\interior}{\operatorname{int}}
\newcommand{\norm}[3][]{%
  \mathchoice{\lVert #2 \rVert_{#3}^{#1\vphantom\int}}%
             {\lVert #2 \rVert_{#3}^{#1}}{}{}}
\newcommand{\tnorm}[2]{%
  \lVert #1 \rVert_{#2}}
\def\vvvert{\hbox{\ensuremath{|\hspace{-0.16em}|\hspace{-0.16em}|}}}
\newcommand{\Norm}[2]{%
  \mathchoice{\vvvert #1 \vvvert\vphantom\vert_{#2}^{\vphantom\int}}%
             {\vvvert #1 \vvvert\vphantom\vert_{#2}}{}{}}
\begin{document}
\title[Galerkin approximations of A-stable Runge--Kutta discretizations]
{Stability under Galerkin truncation of
A-stable Runge--Kutta discretizations in time}

\author[M. Oliver]{Marcel Oliver}
\address[M. Oliver]%
{School of Engineering and Science \\
 Jacobs University \\
 28759 Bremen \\
 Germany}
\email{oliver@member.ams.org}

\author[C. Wulff]{Claudia Wulff}
\address[C. Wulff]%
{Department of Mathematics\\
 University of Surrey \\
 Guildford GU2 7XH \\
 UK}
\email{c.wulff@surrey.ac.uk}

\date{\today}
 
\begin{abstract}
We consider semilinear evolution equations for which the linear part
is normal and generates a strongly continuous semigroup and the
nonlinear part is sufficiently smooth on a scale of Hilbert spaces.
We approximate their semiflow by an implicit, A-stable Runge--Kutta
discretization in time and a spectral Galerkin truncation in space.
We show regularity of the Galerkin-truncated semiflow and its
time-discretization on open sets of initial values with bounds that
are uniform in the spatial resolution and the initial value.  We also
prove convergence of the space-time discretization without any
condition that couples the time step to the spatial resolution.  Then
we estimate the Galerkin truncation error for the semiflow of the
evolution equation, its Runge--Kutta discretization, and their
respective derivatives, showing how the order of the Galerkin
truncation error depends on the smoothness of the initial data.  Our
results apply, in particular, to the semilinear wave equation and to
the nonlinear Schr\"odinger equation.
\end{abstract}

\maketitle

\tableofcontents

\section{Introduction}

We study semilinear evolution equations
\begin{equation}
  \label{e.pde}
  \partial_t U  = F(U)  = AU + B(U) 
\end{equation}
posed on a Hilbert space $\kY$ under spectral spatial Galerkin
truncation and temporal discretization by a large class of A-stable
Runge--Kutta methods.  The methods considered permit a well-defined
temporal semi-discretization \cite{LubOst93,OW2010}; particular
examples are Gauss--Legendre Runge--Kutta schemes.  The linear
operator $A$ of \eqref{e.pde} is assumed to be normal and to generate
a strongly continuous, not necessary analytic semigroup; $B$ is a
bounded nonlinear operator on $\kY$.  (This setting includes, without
loss of generality, cases where $A$ is normal up to a bounded
perturbation as a bounded non-normal part can always be included into
the operator $B$).  The examples we have in mind are semilinear
Hamiltonian evolution equations such as the semilinear wave equation
or the nonlinear Schr\"odinger equation, though for the results in
this paper we do not assume a Hamiltonian structure.

Differentiation of the semiflow in time results in multiplication with
the unbounded operator $A$.  Hence, in general, the time derivative of
the semiflow is only well-defined when considered as a map from a
subset of $D(A)$ to $\kY$ \cite{Pazy}; to be able to differentiate
repeatedly, we assume that $B$ is $\kC^{N-k}$ as a map from some open
set $\kD_k \subset \kY_k \equiv D(A^k)$ to $\kY_k$ for $k=0, \ldots,
K$ and $N>K$.  This is formalized as condition (B1) in the main text
of the paper.  We prove that the semiflow of the Galerkin truncated
evolution equation and its temporal discretization are of class
$\kC^K$ jointly in time (resp.\ stepsize) and in the initial data when
considered as a map from $\kD_K \subset \kY_K$ to $\kY$, with uniform
bounds in the spatial resolution.  Analogous results hold true for the
full semiflow and its time-semidiscretization \cite{OW2010}.  We prove
full-order convergence of the space-time discretization on open sets
of initial data without the need of a Courant condition that couples
spatial and temporal resolution.  We then provide estimates on the
truncation error of the Galerkin approximation of the semiflow, the
temporal discretization and their derivatives, and study the
dependence of the order of the truncation error on the smoothness of
the initial data.

When implicit Runge--Kutta methods are applied to stiff problems, they
often converge at less than their formal order of convergence.  This
phenomenon is called \emph{order reduction} \cite{BW96}.  For
time-semidiscretizations of initial-boundary value problems,
order-reduction can be tied to lack of regularity \cite{BrennerThomee}
or mismatch of boundary conditions in the internal stages of the
method \cite{AM02}; both papers give conditions under which full-order
convergence is achieved.  In our work, we are in the setting of
\cite[Theorem 3]{BrennerThomee} except that we consider semilinear
equations.  For linear evolution equations, this earlier result gives
order $p$ convergence when $p$ is the formal order of the method and
the initial data is in $D(A^{p+1})$.  When considering semilinear
problems, our condition (B1) on the mapping properties of the
nonlinearity typically imposes additional boundary conditions that the
nonlinearity $B$ has to match if the operator $A$ has boundary
conditions which are not periodic.  Condition (B1), together with the
assumption that the initial data lie in $D(A^{p+1})$, enforces
matching boundary conditions and excludes order reduction.

The standard requirement for the existence of a semiflow is Lipshitz
continuity of the nonlinearity $B$ \cite{Pazy}.  It holds true for a
large class of evolution equations and will be referred to as
condition (B0) in the main text of the paper.  Our assumption (B1)
implies Lipshitz continuity.  Whether the stronger condition (B1)
holds true with $K>0$ depends nontrivially on the evolution equation
and its boundary conditions.  It is satisfied by our main examples,
the semilinear wave equation and nonlinear Schr\"odinger equation with
smooth nonlinearities and periodic or homogeneous Neumann boundary
conditions.  It is also satisfied for homogeneous Dirichlet conditions
under additional conditions on the nonlinearity, see
Section~\ref{s.pde} below.  If (B1) is not satisfied for sufficiently
large $K$, we cannot ensure that the solution $U(t)$ of \eqref{e.pde}
and its numerical approximations have enough temporal smoothness to
obtain full order convergence of the time-discretization independent
of the spatial resolution.

We recall from \cite{Pazy} that the solution to the full semilinear
evolution equation is obtained as a fixed point of a contraction map,
which we consider on the scale of Hilbert spaces $\kY_0, \dots,
\kY_K$.  Similarly, the Runge--Kutta temporal discretizations are
functions of the Runge--Kutta stage vectors, which in turn are
obtained as fixed points of contraction maps.  Remaining in this
setting, we now consider spatial Galerkin approximation as a
perturbation of these contraction maps.  To do so, we provide an
abstract theory for the stability of fixed points under perturbation
of contraction mappings on a scale of Banach spaces, thereby extending
the theory of contraction maps on scales of Banach spaces from
\cite{OW2010,Vand87,Wulff00}.  This theory provides us with a unified
framework for the time-continuous and the time-discrete cases.

Let us mention some related results.  Spatial spectral Galerkin
approximation (also called Faedo--Galerkin approximation) is
frequently used as a theoretical tool for the construction of
solutions to partial differential equations; see, e.g.,
\cite{DoeringGibbon,Temam}.  Error estimates for smooth solutions of
parabolic problems under spectral and more general Galerkin
approximations (such as finite element methods) can be found, e.g., in
\cite{ThomeeEtAl,ThomeeGalerkin}.  In the parabolic case, there has
been a lot of interest in the so-called nonlinear Galerkin method
which has been shown to have a better convergence rate than the
standard spectral Galerkin method, see \cite{Titi,MTW03} and
references therein.  For analytic initial data, an exponential rate of
convergence of the Galerkin approximation to the semiflow of the
Ginzburg--Landau equation has been shown in \cite{Ginzburg}.

Hyperbolic problems, namely the semilinear wave equation, and their
discretizations have been studied, e.g., by Baker \emph{et al.}\
\cite{BakerGalerkinHyp}.  They provide estimates for the order of
convergence of the spatial Galerkin method of the semiflow for smooth
enough data and globally Lipshitz nonlinearities under an assumption
on the elliptic projection of the solution; they also treat explicit
multistep time discretizations of the spatial approximation under a
Courant condition that couples the accuracy of the Galerkin method
with the time-stepsize.  Bazley \cite{Bazley} shows the convergence of
the Faedo--Galerkin approximations of the semilinear wave equation for
a special class of nonlinearities on the interval of existence of the
continuous solution.  Verver and Sanz-Serna \cite{VS84} identify
general consistency and stability conditions in which convergence of
spatial semidiscretizations and of their temporal discretizations can
be proved.  They further verify these conditions for a nonlinear
parabolic PDE and for the cubic nonlinear Schr\"odinger equation.  In
this paper we provide a general framework in which those conditions
hold true with uniform bounds on open sets of initial data.  Miklavcic
\cite{Miklavcic} studies a class of parabolic and hyperbolic
semilinear evolution equations with a linear part that generates a
$\kC^0$ semigroup, and shows pointwise convergence of the spatial
Galerkin approximations of the semiflow; he considers nonlinearities
$B$ which are Lipshitz on the whole of $\kY$.  Karakashian \emph{et
al.}\ \cite{RK-NLS-98} study a class of implicit Runge--Kutta
time-discretizations (including Gauss--Legendre methods) and spatial
Galerkin approximations for the cubic nonlinear Schr\"odinger
equations and prove convergence for smooth solutions under mesh
conditions that couple spatial and temporal resolution.
  
In this paper, the emphasis is on estimates for the spatial Galerkin
truncation error of the joint higher order derivatives in time and in
the initial data both of the semiflow and of its temporal
discretization.  In contrast to \cite{BakerGalerkinHyp,RK-NLS-98}, our
estimates for the numerical method hold uniformly in the time-stepsize
and do not require conditions that couple the spatial and temporal
accuracy of the discretization.  Our results include statements on the
pointwise convergence of Galerkin spatial semi-discretizations for
non-smooth solutions of \eqref{e.pde} on their interval of existence,
see Theorem~\ref{t.wpp}.  These are similar to the results of
\cite{Bazley,Miklavcic}, but include more general evolution equations.
Our results yield algebraic orders of the Galerkin truncation error
for smooth, but non-analytic initial data.  However, using the methods
developed here, it is also possible to obtain exponential estimates
for analytic data as in \cite{Ginzburg}.
 
The paper is organized as follows.  In Section~\ref{s.pde}, we
introduce the class of semilinear evolution equations considered, and
show how the semilinear wave equation and the nonlinear Schr\"odinger
equation fit into this framework for different types of boundary
conditions.  In this setting, we study regularity and stability under
Galerkin truncation of the semiflow.  In Section~\ref{s.rk}, we apply
a class of A-stable Runge--Kutta methods to the semiflow of the
Galerkin truncated evolution equation and prove results on regularity
and stability under Galerkin truncation for the temporal
discretization which are analogous to the semiflow.  We also study
convergence of the space-time discretization.

We present our results in two versions: we label results that provide
uniformity of the time interval of existence (for the semiflow) and
the maximum time step (for the numerical method) on sufficiently small
balls of initial data as ``local version.''  Assuming more regularity
for the initial data, we also obtain results which are uniform on
bounded open sets so long as $B$ is well-defined and bounded.  We will
label results of this type by ``uniform version.''

In the appendix, we present a number of technical results on stability
of fixed points of contraction maps on scales of Banach spaces, which
are needed in the main body of the paper.

%%%%%%%%%%%%%%%%%%%%%%%%%%%%%%%%%%%%%%%%%%%%%%%%%%%%%%%%%%%%%%%%%%%%%%%%%%%%%%
%%%%%%%%%%%%%%%%%%%%%%%%%%%%%%%%%%%%%%%%%%%%%%%%%%%%%%%%%%%%%%%%%%%%%%%%%%%%%%
%%%%%%%%%%%%%%%%%%%%%%%%%%%%%%%%%%%%%%%%%%%%%%%%%%%%%%%%%%%%%%%%%%%%%%%%%%%%%%
%%%%%%%%%%%%%%%%%%%%%%%%%%%%%%%%%%%%%%%%%%%%%%%%%%%%%%%%%%%%%%%%%%%%%%%%%%%%%%

\section{Semilinear evolution equations under Galerkin truncation}
\label{s.pde}
 
We begin by introducing the class of semilinear evolution equations
which we study in this paper.  We then prove regularity of the
Galerkin truncated semiflow with uniform bounds in its spatial
resolution and analyze the dependency of the truncation error of the
semiflow and its derivatives on the smoothness of the initial data.

%%%%%%%%%%%%%%%%%%%%%%%%%%%%%%%%%%%%%%%%%%%%%%%%%%%%%%%%%%%%%%%%%%%%%%%%%%%%%%

\subsection{General setting}
\label{ss.genSett}

We consider the semilinear evolution equation \eqref{e.pde} on a
Hilbert space $\kY$ and assume the following.

\begin{itemize}
\item[(A)] $A$ is a normal operator on a Hilbert space $\kY$ which
generates a $\kC^0$-semigroup $\e^{tA}$.

\item[(B0)] $B \colon \kD \to \kY$ is Lipshitz.
\end{itemize}
Recall that an operator $A$ is normal if it is closed and $AA^* = A^*
A$.  For a definition of strongly continuous semigroups
($\kC^0$-semigroups), see \cite{Pazy}.  Assumption (A) implies that
there exists $\omega \in \R$ such that
\begin{equation}
  \label{e.etAEstimate}
  \Re (\spec A) \leq \omega 
  \qquad \mbox{and} \qquad 
  \lVert \e^{tA} \rVert \leq  \, \e^{\omega t}
\end{equation}
for all $t \geq 0$.  In case $A= A_\rn + A_\rb$ where $A_\rn$
satisfies (A) and $A_\rb$ is bounded, we can redefine $B$ as $B+A_\rb$
and and $A$ as $A_\rn$, whence conditions (A) and (B0) hold true.
This situation is typical for semilinear wave equations, see Example
\ref{ex.swe}.
 
For fixed $T>0$ and $U^0 \in \kD$ let $W \in \kC([0,1];\kY)$ satisfy
the fixed point equation $W = \Pi(W;U^0,T)$ where, for $\tau \in
[0,1]$,
\begin{equation}
  \label{e.PiFlow}
  \Pi(W;U^0,T)(\tau) = \e^{\tau T A} U^0 +
  T \int_0^\tau \e^{(\tau-\sigma)TA} \, B(W(\sigma)) \, \d\sigma \,.
\end{equation}
When $T$ is small enough, $\Pi$ is a contraction on the space
$\kCb([0,1];\kY)$ so that the contraction mapping theorem implies the
existence of a fixed point \cite{Pazy}.  We then define the semiflow
$\Phi$ of \eqref{e.pde} by $\Phi^{\tau T}(U^0) = W(U^0,T)(\tau)$.  We
sometimes write $\Phi^t$ to denote the map $\Phi(\,\cdot\,,t)$.

It is apparent from \eqref{e.PiFlow} with $B=0$ that the $\ell$-th
time derivative of $U(t)$ is in $\kY$ only if $U^0 \in D(A^\ell)$.
Hence, we work on a hierarchy of Hilbert spaces defined as follows.
We set $\kY_0 \equiv \kY$ and, for $\ell \in \N$, we define $\kY_\ell
\equiv D(A^\ell)$ endowed with scalar product
\[
  \langle U_1, U_2 \rangle_{\kY_{ \ell}}
   = \langle \P U_1, \P U_2 \rangle_{\kY}   + 
     \langle 
       \lvert  A \rvert^{\ell} \, \Q U_1,
       \lvert   A \rvert^{\ell} \, \Q U_2
     \rangle_{\kY} \,.
\]
Here $\P \equiv \P_1$ is the spectral projector of $A$ onto the set
$\{\lambda \in \spec (A) \colon \lvert \lambda \rvert \leq 1 \}$, and
$\Q = 1 - \P$.  This definition of the norm ensures that
\begin{equation}
  \label{e.normA}
  \norm{A}{\kY_{\ell+1} \to \kY_{\ell}} \leq  1
  \quad \text{and} \quad
  \norm{U}{\kY_{\ell}} \leq  \norm{U}{\kY_{\ell+1}}
\end{equation}
for all $U \in \kY_{\ell+1}$.  

Let $\kD \subset \kY$ be open.  We define
\begin{equation}
  \label{e.kD-delta}
  \kD^{-\delta}  
  = \{ U \in \kD  \colon 
       \dist_{\kY } (U, \partial \kD ) > \delta \} \,.
\end{equation}
Given $\delta>0$ and a hierarchy of open sets $\kD_\ell \subset
\kY_\ell$ for $\ell = 0, \ldots, L$ for $L \in \N$ with $\kD_0 \equiv
\kD$, we define $\kD_0^{-\delta} \equiv \kD^{-\delta}$ as in
\eqref{e.kD-delta} and, for $\ell=1, \dots, L$,
\begin{equation}
  \label{e.Dk-delta}
  \kD^{-\delta}_\ell 
  \equiv \{ U \in \kD_\ell \colon 
           \dist_{\kY_\ell} (U, \partial \kD_\ell) > \delta \}  \,.
\end{equation}
Then, by construction, $\kB_{\delta}^{\kY_\ell}(U) \subset \kD_\ell$
for all $U \in \kD^{-\delta}_\ell$ and $\ell=0, \ldots, L$ where, for
any Banach space $\kX$ and $X^0 \in \kX$, we write
\[
  \kB_R^\kX(X^0) = \{ X \in \kX \colon \norm{X-X^0}\kX \leq R \}
\]
to denote the closed ball of radius $R$ around $X^0$.

Let $\kY_1$ be a Banach space continuously embedded into the Banach
space $\kY$.  Then $\kD_1 \subset \kY_1$ is called a $\delta_*$-nested
subset of $\kD \subset \kY$ if $\kD^{-\delta}_1 \subset \kD^{-\delta}
$ for all $\delta \in [0,\delta_*]$.  Furthermore we say that the
family $\kD_0, \dots, \kD_L$ is $\delta_*$-nested if
\[
\kD^{-\delta}_\ell \subset \kD^{-\delta}_{\ell-1} ~~\mbox{for all}~~\delta
\in [0,\delta_*] \mbox{ and}~~ \ell = 1,\ldots, L,
\]  
 with $\delta_*>0$.
For example, the family $\kD_k = \kB_R^{\kY_k}(U^0)$ is $\delta_*$-nested
for every $\delta_* \in (0,R)$ and $U^0 \in \kY_L$.  However, an
arbitrary nested family $\kD_\ell \subset \kY_\ell$ may not be
$\delta_*$-nested for any $\delta_*>0$.

Finally, we write $B \in \kCb^{N}(\kD, \kY)$ for some $\kD \subset
\kY$ if $B \in \kC^{N}(\kD, \kY)$ and if its derivatives are, in
addition, bounded and extend continuously to the boundary.  Then we
can state a condition under which $\Phi$ defines a semiflow on the
scale of spaces $\kY_0, \dots, \kY_K$.

\begin{itemize}
\item[(B1)] There exist $K \in \N_0$, $N \in \N$ with $N>K$, and a
$\delta_*$-nested sequence of $\kY_k$-bounded and open sets $\kD_k
\subset \kY_k$ such that $B \in \kCb^{N-k}(\kD_k, \kY_k)$ for $k = 0,
\dots, K$.
\end{itemize}
We denote the bounds of the maps $B \colon \kD_k \to \kY_k$ and their
derivatives by constants $M_k$, $M_k'$, etc., for $k = 0, \dots, K$
and set $M=M_0$, $M'=M'_0$, and so forth.  In addition to the domains
$\kD_0, \dots, \kD_K$ defined in this assumption, we will sometimes
need to refer to $\kD_{K+1}$, which may be any $\delta_*$-nested
subset of $\kD_K$ which is bounded and open in $\kY_{K+1}$.

We now give two examples of PDEs that satisfy assumptions (A) and
(B1).

\begin{example}[Functional setting for the semilinear wave equation]
\label{ex.swe} 
For the semilinear wave equation 
\begin{equation}
  \label{e.swe}
  \partial_{tt}u = \partial_{xx} u - f(u)
\end{equation} 
on $I=(0,1)$ with periodic boundary conditions $u(0)=u(1)$, we set
$U=(u,v)$ and
\[
  \kY_{\ell} = \kH_{\ell+1}(I;\R) \times \kH_{\ell}(I;\R)
\]
for $\ell \in \N$.  Here, $\kH_{\ell}(I;\R)$ denotes the Sobolev space
of square integrable functions whose first $\ell$ weak derivatives are
square-integrable.  Then the operators $A$ and $B$ are given by
\begin{equation}
  \label{e.AB}
 \tilde A = 
  \begin{pmatrix}
  0 & \id \\ \partial^2_x & 0
  \end{pmatrix} \,, \quad 
  A = (1-\P_0) \tilde A \,,
  \quad \text{and} \quad
  B(U) =   
  \begin{pmatrix}
    u \\ -f(u)
  \end{pmatrix} \,,
\end{equation}
where $\P_0$ is the spectral projector of $\tilde A$ to the eigenvalue
$0$.  Note that we have moved $\P_0 \tilde A U$ into the nonlinearity
$B$ as $\P_0 \tilde A$ is not normal.  Then the group generated by $A$
is unitary on any $\kY_{\ell}$ and $A$ generates a $\kC^0$-group on
$\kY_{\ell}$.  So, assumption (A) is satisfied.  If the nonlinearity
$f$ of the semilinear wave equation \eqref{e.swe} is, e.g., a
polynomial, then (B1) is satisfied for any $K$ and $N$ as $\kH_\ell$
is a topological algebra for $\ell>1/2$ \cite{Adams}.  More generally,
if $f \in \kC^N(D,\R)$ for some $N\in \N$ and $D\subset \R$ open, then
(B1) holds for $K<N$; see, e.g., \cite[Theorem 2.12]{OW2010}.  The
same holds true in the case of homogeneous Neumann boundary
conditions.  For homogeneous Dirichlet conditions we must additionally
require that $f^{(2j)}(0)=0$ for $0\leq 2j\leq K-1$; the same
restriction on the nonlinearity applies when $A$ is a nonconstant
coefficient operator and $K\leq 4$, see \cite[Section~2.5]{OW2010}.
\end{example}
 
%%%%%%%%%%%%%%%%%%%%%%%%%%%%%%%%%%%%%%%%%%%%%%%%%%%%%%%%%%%%%%%%%%%%%%%%%%
%%%%%%%%%%%%%%%%%%%%%%%%%%%%%%%%%%%%%%%%%%%%%%%%%%%%%%%%%%%%%%%%%%%%%%%%%%

\begin{example}[Functional setting for the nonlinear Schr\"odinger
equation] \label{ex.nse}
For the nonlinear Schr\"odinger equation
\begin{equation}
  \label{e.nse}
  \i \, \partial_t u 
  = - \partial_{xx} u + \partial_{\overline{u}} V(u,\overline{u})
\end{equation}
with periodic boundary conditions on $I=(0,1)$, we set $U \equiv u$
and identify
\begin{equation}
  \label{e.nlsdefs}
  A = \i \, \partial^2_x 
  \quad \text{and} \quad
  B(U) = -\i \, \partial_{\overline{u}} V(u,\overline{u}) \,. 
\end{equation}
The Laplacian is diagonal in the Fourier representation with
eigenvalues $-k^2$ where $k \in \Z$.  Hence $A$ generates a unitary
group on the square integrable functions $\kL_2 \equiv \kL_2(I;\C)$
and, more generally, on every $\kH_{\ell}(I;\C)$ with $\ell \in \N_0$.
So the operator $A$ is normal, and assumption (A) holds trivially.  In
the notation of the abstract functional setting of
Section~\ref{ss.genSett}, we choose $\kY_{\ell} =
\kH_{2\ell+1}(I;\C)$.  If the potential $V(u, \overline u)$ satisfies
$V \in \kC^{K+2+N}(D,\R^2)$ for some open subset $D \subset \R^2
\equiv \C$, then, by \cite[Theorem 2.12]{OW2010}, the nonlinearity $B$
defined in \eqref{e.nlsdefs} satisfies assumption (B1) for $K<N$ and,
in particular, (B0).
\end{example}

%%%%%%%%%%%%%%%%%%%%%%%%%%%%%%%%%%%%%%%%%%%%%%%%%%%%%%%%%%%%%%%%%%%%%%%%%

\subsection{Spectral Galerkin truncation and convergence}
\label{s.truncation}

We now truncate the evolution equation \eqref{e.pde} to an
$A$-invariant subspace (Galerkin subspace) as follows.  For $m \in \N$
let $\P_m$ be the sequence of spectral projectors of $A$ onto the set
$\{ \lambda \in \spec (A) \colon \lvert \lambda \rvert \leq m \}$.
Then, assumption (A) implies that
\[
  \lim_{m \to \infty} \P_m U = U
\]
for all $U \in \kY$, and
\begin{equation}
  \label{eq:PmAEstimate}
  \norm{A \P_m U}{\kY} \leq m \, \norm{\P_m U}{\kY} 
\end{equation}
for $m \in \N$.  Functions $U \in \kY_{ \ell }$ are well approximated
by their Galerkin projections $\P_m U$.  Indeed, setting $\Q_m = \id
-\P_m$,
\begin{equation}
  \norm{\Q_m U}\kY 
  \leq m^{-\ell} \, \norm{U}{\kY_{ \ell}} \,.
 \label{e.qm-estimate}
\end{equation}
We now introduce the restricted evolution equation
\begin{equation} 
  \label{e.pde-m}
\begin{aligned}
  \dot u_m 
  & =   A u_m + B_m(u_m) 
  = \P_m F(u_m) \\
  & \equiv f_m(u_m)  
  = A u_m + B_m(u_m) \,,
\end{aligned} 
\end{equation}
where $B_m = \P_m B$.  We write $\phi^t_m(\cdot)$ to denote the
semiflow of \eqref{e.pde-m} on $\P_m \kY$ and define $\Phi_m= \phi_m
\circ \P_m$.

The following theorem provides well-posedness for the projected system
on the same interval of time on which a solution to the full equation
exists, and convergence of solutions.

\begin{theorem}[Convergence of the projected system]
\label{t.wpp}
Under assumptions \textup{(A)} and \textup{(B0)}, let $U \in
\kC([0,T], \kD)$ be a mild solution to the semilinear evolution
equation \eqref{e.pde} with initial value $U(0)= U^0$.  Then there is
$m_* \in \N$ such that for every $m \geq m_*$ there exists a solution
$u_m \in \kC([0,T], \kD)$ to the projected system \eqref{e.pde-m} with
initial value $u_m(0) = \P_m U^0$.  Moreover,
\begin{equation}
  \sup_{t \in [0,T]} \norm{U(t)-u_m(t)}{\kY} \to 0
  \label{e.ym-convergence}
\end{equation}
as $m \to \infty$.
\end{theorem}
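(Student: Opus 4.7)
The plan is a continuation argument built on a Gronwall-type comparison between $U$ and $u_m$. Since $U \in \kC([0,T],\kD)$ and $\kD$ is open in $\kY$, the image $U([0,T])$ is a compact subset of $\kD$, so there exists $\delta>0$ such that the open tube $\kU_\delta \equiv \bigcup_{t \in [0,T]} \{V \in \kY \colon \norm{V-U(t)}{\kY} < \delta\}$ is contained in $\kD$ and bounded in $\kY$. The truncated nonlinearity $B_m = \P_m B$ inherits both a sup-norm bound and the Lipschitz constant $L$ of $B$ on $\kU_\delta$, uniformly in $m$, because $\P_m$ is an orthogonal projection. Hence the fixed point argument of \eqref{e.PiFlow} applied to $B_m$ in place of $B$ yields a uniform local existence time $\tau_0>0$, independent of $m$, such that any initial datum in the $(\delta/2)$-core of $\kU_\delta$ launches a mild solution of \eqref{e.pde-m} that remains in $\kU_\delta$ for time at least $\tau_0$.

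Next, I would subtract the two fixed point identities for $U$ and $u_m$. Inserting the decomposition $B(U) - \P_m B(u_m) = (B(U)-B(u_m)) + \Q_m B(u_m)$ and using \eqref{e.etAEstimate} yields, on any subinterval $[0,T^*]$ on which $u_m$ exists and stays in $\kU_\delta$,
\[
  \norm{U(t)-u_m(t)}{\kY} \leq \e^{\omega T} \norm{\Q_m U^0}{\kY} + \int_0^t \e^{\omega (t-s)} \bigl( L \norm{U(s)-u_m(s)}{\kY} + \norm{\Q_m B(u_m(s))}{\kY} \bigr) \, \d s \,.
\]
Estimating $\norm{\Q_m B(u_m(s))}{\kY} \leq \norm{\Q_m B(U(s))}{\kY} + L \norm{U(s)-u_m(s)}{\kY}$ and observing that $s \mapsto B(U(s))$ is continuous from $[0,T]$ into $\kY$, hence of compact range, strong convergence $\P_m \to \id$ together with Banach--Steinhaus gives $\epsilon_m \equiv \sup_{s \in [0,T]} \norm{\Q_m B(U(s))}{\kY} \to 0$; clearly also $\norm{\Q_m U^0}{\kY} \to 0$. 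Gronwall's inequality then provides a constant $C$ depending only on $L$, $\omega$, and $T$ such that
\[
  \sup_{t \in [0,T^*]} \norm{U(t)-u_m(t)}{\kY} \leq C \, \bigl( \norm{\Q_m U^0}{\kY} + \epsilon_m \bigr) \,.
\]

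Finally, I would close the bootstrap. Let $T^*(m)$ denote the supremum of $t \in [0,T]$ for which $u_m$ exists and remains in $\kU_\delta$. Choose $m_*$ so large that for all $m \geq m_*$ the right-hand side of the Gronwall bound is strictly smaller than $\delta/2$. If $T^*(m)<T$, this forces $u_m(T^*(m))$ into the $(\delta/2)$-core of $\kU_\delta$, and the uniform local existence result extends $u_m$ past $T^*(m)$ while keeping it in $\kU_\delta$, contradicting the definition of $T^*(m)$. Hence $T^*(m)=T$ for all $m \geq m_*$, and the same Gronwall bound on $[0,T]$ yields \eqref{e.ym-convergence}.

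The main obstacle is the circular dependence between existence on $[0,T]$ and the a~priori error bound: the Gronwall estimate is valid only as long as $u_m$ remains in $\kU_\delta \subset \kD$, while the only means to confine $u_m$ to $\kU_\delta$ is the error bound itself. Extracting a uniform local existence time $\tau_0$ depending only on the geometry of the tube $\kU_\delta$ rather than on $m$ is what decouples the two requirements and makes the bootstrap close.
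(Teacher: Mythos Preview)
Your proof is correct and follows the same strategy as the paper's: a Gronwall comparison isolating $\norm{\Q_m U^0}{\kY}$ and $\sup_s\norm{\Q_m B(U(s))}{\kY}$, uniform decay of the latter via compactness, and a continuation argument confining $u_m$ to $\kD$. The only cosmetic differences are that the paper obtains the uniform decay through Arzel\`a--Ascoli (equicontinuity of $s\mapsto\norm{\Q_m B(U(s))}{\kY}$) rather than your compact-range argument, and its continuation step is terser---it simply notes that $A\P_m$ is bounded so local existence is automatic for each $m$, and that the Gronwall bound itself keeps $u_m$ inside $\kD$---so your uniform $\tau_0$ is not actually needed.
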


\begin{proof}
Local existence of a solution $u_m(t)$ of \eqref{e.pde-m} is obvious
since $A_m$ is bounded.  However, we need to show that the interval of
existence is at least $[0,T]$.  We note that the solution can only
cease to exist if $u_m$ leaves the domain $\kD$, so we proceed to
prove \eqref{e.ym-convergence} directly.  Clearly,
\begin{equation}
  \label{e.mild-diff}
  U(t) - u_m(t) = \e^{tA} \Q_m U^0 
  + \int_0^t \e^{(t-s)A} \, (B(U(s)) - B_m(u_m(s))) \, \d s \,.
\end{equation}
Taking the $\kY$-norm and noting that, by \eqref{e.etAEstimate}, there
is $c>0$ such that $\lVert \e^{tA} \rVert_{\kE(\kY)} \leq c$ for $t
\in [0,T]$, we find that
\begin{align}
  & \norm{U(t) - u_m(t)}{\kY}
    \leq c\, \norm{\Q_m U^0}{\kY}
       + c \int_0^t \norm{B(U(s)) - B_m(u_m(s))}{\kY} \, \d s
       \notag \\
  & \leq c \, \norm{\Q_m U^0}{\kY}
       + c \, T \, \sup_{s \in [0,T]} \norm{\Q_m B(U(s))}{\kY} 
       + c \int_0^t \norm{B(U(s)) - B(u_m(s))}{\kY} \, \d s \,.
       \notag \\
  & \leq c \, \norm{\Q_m U^0}{\kY}
       + c \, T \, \sup_{s \in [0,T]} \norm{\Q_m B(U(s))}{\kY} 
       + c \, M_0' \int_0^t \norm{U(s) - u_m(s)}{\kY} \, \d s \,.
  \label{e.u-ym}
\end{align}
Now note that the sequence of functions $f_m(s) = \lVert \Q_m B(U(s))
\rVert_{\kY}$ converges pointwise to zero as $m \to \infty$.
Moreover, since
\[
  \lvert f_m(s_1) - f_m(s_2) \rvert
  \leq \norm{\Q_m (B(U(s_1)) - B(U(s_2)))}{\kY}
  \leq \norm{B(U(s_1)) - B(U(s_2))}{\kY} \,,
\]
the sequence is uniformly equicontinuous.  Hence, by the
Arzel\`a--Ascoli theorem, $f_m$ converges to zero uniformly as $m \to
\infty$.  Thus, applying the Gronwall inequality to \eqref{e.u-ym}, we
see for any $\varepsilon > 0$ there exists a possibly larger $m_*$
such that for $m \geq m_*$, $\lVert U(t) - u_m(t) \rVert_\kY \leq
\varepsilon$ so long as $u_m(t)$ does not leave $\kD$.  Choosing
$\varepsilon < \dist(\{U(s) \colon s \in [0,T]\},\partial \kD)$, we
conclude that $t$ in this estimate may be chosen as large as $T$.
\end{proof}

We now define
\begin{equation}
  \label{e.Rk}
  R_{K+1} = \sup_{U \in \kD_{K+1}} \norm{U}{\kY_{K+1}} \,.
\end{equation}

The following theorem provides higher order bounds for the Galerkin
approximation error of the semiflow.
                                  
\begin{corollary}[Convergence of the projected system -- higher order
error bounds] \label{c.wpp.unif}
Assume \textup{(A)} and \textup{(B1)}.  Let $\delta\in (0,\delta_*]$
be such that $\kD_{K+1}^{-\delta}$ is nonempty.  Then there exists
$m_*$ such that for all $m \geq m_*$ and every mild solution $U \in
\kC([0,T];\kD_{K+1}^{-\delta})$ of the semilinear evolution equation
\eqref{e.pde} there exists a solution $u_m \in \kC([0,T],\kD_K \cap
\kY_{K+1})$ to the projected system \eqref{e.pde-m} with initial value
$u_m(0) = \P_m U^0$ such that
\begin{equation}
  \label{e.wpp.higher}
  \sup_{t \in [0,T]} \norm{U(t)-u_m(t)}{\kY} = O(m^{-K-1}) 
\end{equation}
The order constants in \eqref{e.wpp.higher} depend only on the bounds
afforded by \textup{(B1)}, \eqref{e.etAEstimate}, and \eqref{e.Rk}, on
$\delta$, and on $T$.
 \end{corollary}
\begin{proof}
As in the proof of Theorem~\ref{t.wpp}, we begin with
\eqref{e.mild-diff}.  Here, we apply $\P_m$ and rearrange terms to
obtain the estimate
\begin{equation}
  \label{e.u-ym-unif}
  \norm{U(t) - u_m(t)}{\kY}
  \leq \norm{\Q_m U(t)}{\kY} 
   + c \int_0^t \norm{B(U(s)) - B(u_m(s))}{\kY} \, \d s \,.
\end{equation}
Due to \eqref{e.qm-estimate}, $\tnorm{\Q_m U(\cdot)}{\kY} \leq
R_{K+1}m^{-K-1}$.  The mean value theorem applies so long as $u_m(s)
\in \kD$.  Then, by the Gronwall lemma as before, we find that
\eqref{e.wpp.higher} holds true for all $m \geq m_*$, where we choose
$m_*$ such that $\norm{U(t)-u_m(t)}{\kY}<\delta$ for $t\in [0,T]$ and
$m\geq m_*$ so that indeed $u_m(s) \in \kD$ for $s\in [0,T]$ and
$m\geq m_*$.
\end{proof}
 
Note that Corollary~\ref{c.wpp.unif} with $\kY$ replaced by any of the
$\kY_1, \dots, \kY_K$ readily implies that $\sup_{t \in [0,T]}
\norm{U(t)-u_m(t)}{\kY_j} = O(m^{-K-1-j})$.  However, as $B$ is not
assumed to map from an open subset of $\kY_{K+1}$ to $\kY_{K+1}$,
Theorem \ref{t.wpp} as stated does not apply with $\kY$ replaced by
$\kY_{K+1}$.  However, we can still prove the following.

\begin{corollary}
Under the assumptions of Corollary~\ref{c.wpp.unif}, the following is
true.
\begin{itemize}
\item[(a)] If $N>K+1$, we have
$\sup_{t \in [0,T]} \tnorm{U(t)-u_m(t)}{\kY_{K+1}} \to 0$ as $m \to \infty$. 
\item[(b)] If $N=K+1$, there exists $C>0$ such that $\sup_{t\in [0,T]}
\tnorm{u_m(t)}{\kY_{K+1}} \leq C$.  The bound $C$ depends only on the
bounds afforded by \textup{(B1)}, \eqref{e.etAEstimate}, and
\eqref{e.Rk}, on $\delta$, and on $T$.
\end{itemize}
\end{corollary}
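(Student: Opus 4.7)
My plan is to handle part (b) first, since its uniform bound feeds into the proof of part (a). In both parts the strategy is to decompose via the spectral projectors $u_m = \P u_m + \Q u_m$ (respectively $E = \P E + \Q E$ with $E = U - u_m$) and to work on $\range\Q$, where $|A|\geq 1$ permits the algebraic factorization $|A|^{K+1}\Q = A\cdot |A|^K\Q$. The leading factor $A$ is then absorbed using $\e^{(t-s)A}A = -\partial_s\e^{(t-s)A}$ and a single integration by parts.

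For part (b), the low-frequency part satisfies $\|\P u_m(t)\|_\kY \leq \|u_m(t)\|_\kY$, which is uniformly bounded on $[0,T]$ by Corollary~\ref{t.wpp.unif}. For the high-frequency part, I apply $|A|^{K+1}$ to the mild equation
\[
  \Q u_m(t) = \e^{tA}\Q\P_m U^0 + \int_0^t \e^{(t-s)A}\Q B_m(u_m(s))\,\d s
\]
and integrate by parts. The boundary contributions are controlled by $\||A|^K\Q B_m(u_m(t))\|_\kY \leq \|B(u_m(t))\|_{\kY_K}\leq M_K$, and the new integrand $|A|^K\Q\P_m DB(u_m)\dot u_m$ is bounded by $M_K'\|\dot u_m\|_{\kY_K}$ using $B\in\kCb^1(\kD_K,\kY_K)$, afforded by $N=K+1$. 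Combining with $\|\dot u_m\|_{\kY_K}\leq\|u_m\|_{\kY_{K+1}}+M_K$ from the equation and \eqref{e.normA} produces a linear integral inequality in $\|u_m(t)\|_{\kY_{K+1}}$ whose Gronwall closure is the desired uniform bound.

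For part (a), I carry out the same integration-by-parts procedure on the mild equation for $E = U - u_m$, now starting from $E(0) = \Q_m U^0$. Part (b) combined with Corollary~\ref{t.wpp.unif} and Hilbert-space interpolation yields $\|E(t)\|_{\kY_j} = O(m^{-(K+1-j)})$ for $0\leq j\leq K$, so in particular $\|E(t)\|_{\kY_K}\to 0$ uniformly on $[0,T]$. Writing $B(U)-B_m(u_m) = (B(U)-B(u_m))+\Q_m B(u_m)$ and differentiating in $s$, every term produced by the integration by parts falls into one of three categories: first, Galerkin-truncation factors $\|\Q_m X(t)\|_{\kY_K}$ with $X\in\kC([0,T],\kY_K)$ (for instance $X = B\circ U$, $X = DB(U)\dot U$, or the constant $X = |A|^{K+1}\Q U^0$), which go to zero uniformly on $[0,T]$ by strong convergence $\Q_m\to 0$ on the compact image $X([0,T])\subset\kY_K$; second, Lipschitz errors $\|(DB(U)-DB(u_m))\dot U\|_{\kY_K}\leq M_K''\|E\|_{\kY_K}\|\dot U\|_{\kY_K}$ that tend to zero by the interpolation estimate above and that rely on $B\in\kCb^2(\kD_K,\kY_K)$, i.e.\ on $N>K+1$; third, bootstrap terms $M_K'\|A(U-u_m)\|_{\kY_K}\leq M_K'\|E(s)\|_{\kY_{K+1}}$ that close via Gronwall on $\||A|^{K+1}\Q E(t)\|_\kY$, noting that the low-frequency part $\|\P E\|_\kY = O(m^{-K-1})$ is already covered.

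The main obstacle will be the bookkeeping in part (a): differentiating $B(U)-B_m(u_m)$ by the chain rule produces several Galerkin, Lipschitz, and bootstrap contributions, and each must be assigned unambiguously to one of the three categories. The precise reason $N>K+1$ enters, and cannot be relaxed to $N=K+1$ without losing convergence, is exactly category two: the Lipschitz bound on $DB$ as a map into $\kL(\kY_K,\kY_K)$ requires a continuous second derivative of $B$ between those spaces, whereas the single derivative available under $N=K+1$ suffices only for the boundedness statement recorded in part (b).
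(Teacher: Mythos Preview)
Your argument is sound, but the paper proceeds by a genuinely different and shorter route.  Rather than controlling the $\kY_{K+1}$ norm through the spectral splitting $\P+\Q$ and the integration-by-parts identity $\e^{(t-s)A}A=-\partial_s\e^{(t-s)A}$, the paper first reduces without loss of generality to $K=0$ (replacing $\kY$ by $\kY_K$ throughout) and then passes to \emph{time derivatives}.  Setting $W=\dot U$, the pair $(U,W)$ satisfies the coupled semilinear system
\[
  \dot U = AU + B(U)\,, \qquad \dot W = AW + B'(U)W\,,
\]
whose nonlinearity $(U,W)\mapsto(B(U),B'(U)W)$ is Lipschitz exactly when $B\in\kCb^2(\kD,\kY)$, i.e.\ when $N>K+1$.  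Theorem~\ref{t.wpp} applied to this enlarged system gives $\sup_t\lVert U'(t)-u_m'(t)\rVert_\kY\to 0$ directly, and the identity $AU=U'-B(U)$ together with Corollary~\ref{t.wpp.unif} converts this into the claimed $\kY_1$ convergence.  Part~(b) is handled by a bare Gronwall estimate on $w_m=u_m'$, which needs only boundedness of $B'$ and hence only $N=K+1$.

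Your method trades this black-box reuse of Theorem~\ref{t.wpp} for a self-contained estimate on $\lVert A^{K+1}\Q E\rVert_\kY$, at the cost of the term-by-term bookkeeping you flag.  One detail to watch: the Galerkin term arising from $\partial_s(\Q_m B(u_m))=\Q_m DB(u_m)\dot u_m$ is $m$-dependent, so it does not fall directly under your category one (strong convergence of $\Q_m$ on a fixed compact set); you must first compare it to $\Q_m DB(U)\dot U$, and the discrepancy feeds additional Lipschitz and bootstrap contributions back into categories two and three.  This is routine and your Gronwall closure absorbs it, but it is worth making explicit.
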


\begin{proof}
We may assume without loss of generality, that $K=0$.  (Otherwise
replace $\kY$ with $\kY_K$.)  Suppose first that $N>K+1$.  Then
Theorem~\ref{t.wpp} applies to the system of evolution equations
\[
  \dot U = A U + B(U) \,, \qquad \dot W = A W + B'(U) W
\]
with initial value $W(0) = A U^0 + B(U^0)$ so that $W(t) = U'(t)$.
Hence,
\[
  \sup_{t \in [0,T]} \norm{U'(t)-u_m'(t)}{\kY} \to 0
\]
as $m \to \infty$.  Since $\sup_{t \in [0,T]} \norm{B(U(t))-\P_m
B(u_m(t))}{\kY} =O(m^{-1})$ by Corollary~\ref{c.wpp.unif} and $A U(t)
= U'(t) - B(U(t))$, we obtain statement (a).

To prove statement (b), integrate $\dot w_m = A w_m + B'(u_m) w_m$
with initial value $w_m(0)=A \P_m U^0 + \P_m B(u_m(0))$ and apply a
standard Gronwall argument as before, noting that the $\kY$-norm of
$u_m(t)$ is bounded uniformly in $m\geq m_*$ by
Corollary~\ref{c.wpp.unif}.  Thus, $\sup_{t\in [0,T]}
\tnorm{u_m'(t)}{\kY} \leq c$ for some $c>0$ depending only on the
bounds afforded by \textup{(B1)}, \eqref{e.etAEstimate}, and
\eqref{e.Rk}, on $\delta$, and on $T$.  This, together with the bound
of $B$ on $\kD$, proves (b).
\end{proof}

%%%%%%%%%%%%%%%%%%%%%%%%%%%%%%%%%%%%%%%%%%%%%%%%%%%%%%%%%%%%%%%%%%%%%%%%%

\subsection{Regularity of Galerkin truncated semiflow}
\label{ss.notation}

We first introduce some notation.  For Banach spaces $\kX$ and $\kY$,
and $j \in \N_0$, we write $\kE^j(\kY,\kX)$ to denote the vector space
of $j$-multilinear bounded mappings from $\kY$ to $\kX$; we set
$\kE^j(\kX) \equiv \kE^j(\kX,\kX)$.  For Banach spaces $\kX$, $\kY$,
and $\kZ$, and subsets $\kU \subset \kX$, $\kV \subset \kY$, and $\kW
\subset \kZ$, we write
\[
  F \in \kCb^{(\underline{m},n)} (\kU \times \kV; \kW)
\]
to denote a continuous, bounded function $F \colon \kU \times \kV \to
\kW$ whose partial Fr\'echet derivatives $\D_X^i \D_Y^j F(X,Y)$ exist,
are bounded, and are such that the maps
\begin{equation}
  \label{e.derivativemaps}
  (X, Y, X_1,\ldots, X_{i}) 
  \mapsto \D_X^i \D_Y^j  F(X,Y)(X_1, \ldots, X_{i}) 
\end{equation}
are continuous from $\kU \times \kV \times \kX^{i}$ into $\kE^j(\kY,
\kZ)$ for $i = 0, \dots, m$ and $j = 0,\dots, n$ and provided the maps
\eqref{e.derivativemaps} extend continuously to the boundary.  (The
latter is important as we will apply the contraction mapping theorem
to maps in such classes.)  In our setting, $\kV$ will typically be an
interval of time.
 
The following theorem provides regularity of the Galerkin truncated
semiflow with bounds uniform in $m$ under conditions (A) and (B1)
analogous to the regularity result for the semiflow $\Phi$ in
\cite[Theorem~2.4]{OW2010}.

\begin{theorem}[Regularity of the Galerkin truncated semiflow, local
version] \label{t.local-diff}
Assume \textup{(A)} and \textup{(B1)}.  Choose $R\in (0,\delta_*]$
small enough such that $\kD_K^{-R} \neq \emptyset$ and pick $U^0 \in
\kD_K^{-R}$.  Then there is $T_*=T_*(R,U^0)>0$ and $m_*(R,U^0)\in\N$
such that for $m\geq m_*$ there exists a semiflow $\Phi^t_m$ of
\eqref{e.pde-m} of class
\begin{subequations}
\begin{equation}
  \label{e.T-diff-gen}
  \Phi_m \in \bigcap_{\substack{j+k \leq N \\ \ell \leq k \leq K}}
  \kCb^{(\underline j, \ell)} (B_{R/2}^{\kY_K}(U^0) \times [0,T_*]; 
  \kB_R^{\kY_{k-\ell}}(U^0)) \,.
\end{equation}
The bounds on $\Phi_m$ and $T_*$ depend only on the bounds afforded by
\textup{(B1)} and \eqref{e.etAEstimate}, on $R$, and on $U^0$.  In
particular,
\begin{equation}
  \label{e.T-diff-gen.b}
  \Phi_m \in 
    \kCb^K (B_{R/2}^{\kY_K}(U^0) \times [0,T_*]; \kB_R^\kY(U^0)) \,.
\end{equation}
\end{subequations}
\end{theorem}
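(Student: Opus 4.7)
The plan is to set up the projected analogue of the contraction map \eqref{e.PiFlow},
\[
  \Pi_m(W;U^0,T)(\tau)
  = \e^{\tau T A}\,\P_m U^0 + T \int_0^\tau \e^{(\tau-\sigma)TA} \, \P_m B(W(\sigma)) \, \d\sigma \,,
\]
and then invoke the abstract stability theory for fixed points of contractions on scales of Banach spaces announced in the introduction and developed in the appendix. The point is that $\P_m$ is an orthogonal spectral projector of the normal operator $A$, so $\P_m$ commutes with $A$ and with $\e^{tA}$ and satisfies $\norm{\P_m}{\kY_k\to\kY_k}\leq 1$ for every $k=0,\dots,K$. Consequently, each quantitative ingredient entering a contraction estimate for $\Pi_m$ on $\kCb([0,1];\kY_k)$ --- the semigroup bound from \eqref{e.etAEstimate}, the bounds on $B$ and its derivatives afforded by (B1), and the norm of $\P_m$ --- is the same as the one entering the corresponding estimate for $\Pi$. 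Thus the construction of $\phi_m$ runs in parallel with the regularity argument for $\Phi$ in \cite[Theorem~2.4]{OW2010}, with every constant independent of $m$.

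Concretely, I would first pick $m_*=m_*(R,U^0)$ so that $\norm{\P_m U^0 - U^0}{\kY_K}<R/4$ for $m\geq m_*$; this is possible because $\P_m\to\id$ strongly on $\kY_K$. Then, choosing $T_*=T_*(R,U^0)$ small enough (depending only on $R$, on $\norm{U^0}{\kY_K}$, on $M_k,M_k'$, and on the semigroup bound over $[0,T_*]$), $\Pi_m(\,\cdot\,;U^0,T)$ is a uniform contraction on the closed ball of radius $R/2$ around the constant curve $\tau\mapsto \P_m U^0$ in $\kCb([0,1];\kY_K)$, and simultaneously self-maps the balls $\kB_R^{\kY_k}(U^0)$ for $k=0,\dots,K$. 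The abstract parameter-dependent contraction mapping theorem on scales of Banach spaces then produces a fixed point $W_m(U^0,T)$ that is $\kCb^{(\underline j,\ell)}$ in $(U^0,T)$ with $j+k\leq N$, $\ell\leq k\leq K$, taking values in $\kB_R^{\kY_{k-\ell}}(U^0)$, with all bounds depending only on the quantities just listed and hence uniform in $m\geq m_*$.

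To pass from smoothness in $(U^0,T)$ to the joint smoothness in $(U^0,t)$ claimed in \eqref{e.T-diff-gen}, I would differentiate the fixed-point relation in $\tau$: formally
\[
  \partial_\tau W = T\,(A W + \P_m B(W)) \,,
\]
so each $\tau$-derivative consumes one application of $A$, accounting precisely for the trade-off $\kY_{k}\to\kY_{k-\ell}$ in the target and the bound $\ell\leq k$ on the time-derivative index. Setting $t=\tau T$ and using that $(U^0,T)\mapsto W_m(U^0,T)$ is $\kCb^{(\underline j,\ell)}$, the chain rule converts the joint $(U^0,T,\tau)$-regularity into the stated joint $(U^0,t)$-regularity of $\phi_m^t(U^0)=W_m(U^0,T)(t/T)$, up to a shrinking of $T_*$. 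The consequence \eqref{e.T-diff-gen.b} is the specialization $k=\ell$, $j+\ell\leq K$.

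The main obstacle is not any single estimate but the combinatorial bookkeeping on the scale of Hilbert spaces: one must check that, as $(j,k,\ell)$ varies over the admissible range, the abstract contraction framework really supplies the corresponding multilinear bounds on $\D_{U^0}^{j}\D_T^{\ell}W_m$ with constants independent of $m$. This reduces to verifying the hypotheses of the appendix's abstract stability statement for $\Pi_m$ viewed as a map of class $\kCb^{(\underline{N-k},1)}$ on the relevant product spaces; the verification is straightforward once it is observed that $\P_m$ neither enlarges norms on $\kY_k$ nor destroys commutation with $A$, so that no $m$-dependent constants enter and the same choice of $T_*$ works simultaneously for all $m\geq m_*$.
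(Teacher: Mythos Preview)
Your approach is essentially the paper's: apply the abstract contraction theorem on scales (Theorem~\ref{t.cm-stab}(a)) to $\Pi_m$ with $\kZ_j=\kC([0,1];\kY_j)$, $\kW_j=\kC([0,1];\kB_R^{\kY_j}(U^0))$, and $\mu=T$, after checking self-mapping and uniform contraction via \eqref{e.etAEstimate} and (B1). The paper likewise centers all balls at $U^0$ and absorbs the projector through the term $\norm{\e^{\tau TA}(\P_m U^0-U^0)}{\kY_j}$ in the self-mapping estimate, which matches your choice of $m_*$.

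The one substantive difference is your proposed passage from $(U^0,T)$-regularity to $(U^0,t)$-regularity by differentiating the fixed-point relation in $\tau$ and invoking a chain rule in $t=\tau T$. This detour is unnecessary and, as sketched, is harder to make uniform in $m$ than you indicate: each $\partial_\tau$ brings in $A$ acting on $w_m$, and while you correctly note that this costs a rung on the scale, turning that into joint $\kCb^{(\underline j,\ell)}$-regularity in $(U^0,\tau)$ with $m$-independent constants is itself a bootstrap on the scale (this is precisely the extra work done in Theorem~\ref{t.local-diff-unif} via the auxiliary map $\tilde\Pi_m$). The paper sidesteps all of this by observing that $\phi_m^t(U)=W_m(U,t)(1)$: the time variable $t$ \emph{is} the parameter $\mu=T$ of the abstract theorem, so the $\ell$ derivatives in $t$ are exactly the $\ell$ derivatives in $\mu$ already delivered by Theorem~\ref{t.cm-stab}(a), composed with the bounded linear evaluation at $\tau=1$. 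The reduction to \eqref{e.T-diff-gen.b} is then Lemma~\ref{l.cm-scale}, which is your ``specialization $k=\ell$'' packaged abstractly.
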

  
\begin{proof}
The proof is an application of Theorem~\ref{t.cm-stab} (a) on
contraction mappings on a scale of Banach spaces.  We consider $\Pi$
from \eqref{e.PiFlow} and write the corresponding contraction map for
the semiflow $\Phi_m$ of the projected system as
\begin{equation}
  \label{e.Pim}
  \Pi_m(W; U, h) = \P_m \Pi (W; \P_m  U, h) \,.
\end{equation}
We replace $N$ from Theorem~\ref{t.cm-stab} by $N-1$, set $\mu=T$,
$\kI = (0, T_*)$, $\kX=\kY_K$ with $\kU \equiv \kU_K =
\kB_{R/2}^{\kX}(U^0)$, $w = W$, and $\kZ_j = \kC([0,1];\kY_j)$ with
$\kW_j =\kC([0,1]; \kB^{\kZ_j}_{R}(U^0))$ for $j=0, \dots, K$.  We now
show that the contraction maps $\Pi_m$ satisfy conditions (i) and (ii)
of Theorem~\ref{t.cm-stab} for some $T_*(R,U^0)>0$ and $m\geq
m_*(R,U^0)$.  We first show that $\Pi_m$ maps each $\kW_0, \dots,
\kW_K$ into itself.  We estimate, using (B1) and
\eqref{e.etAEstimate}, that
\begin{align}
  \norm{\Pi_m(W;U,T) - U^0}{\kY_j}
  & \leq \norm{\e^{\tau T A}  U^0 - U^0}{\kY_j} + 
         \norm{\e^{\tau T A}(\P_m U^0 - U^0)}{\kY_j} \notag \\
  & \quad 
    + \e^{\omega T} \, R/2 + T \,  \, \e^{\omega T} \, M_j \,.  
  \label{e.localEx}
\end{align}
Choosing $T_* = T_*(R,U^0)>0$ sufficiently small, the second line of
\eqref{e.localEx} can be made less than $3R/4$.  Moreover, for a
possibly smaller value of $T_*$, there exists $m_* = m_* (R, U^0)$
such that for all $m \geq m_*$, $T\in [0,T_*]$, and $\tau\in[0,1]$ the
first line of \eqref{e.localEx} is less than $R/4$.  Then, the right
hand side of \eqref{e.localEx} is less than $R$ which proves that
$\Pi_m$ maps back into $\kW_j$.  Assumption (B1) and (A) then imply
condition (i) of Theorem~\ref{t.cm-stab}.  To show condition (ii) we
estimate, noting that $N>K$ by (B1), that
\begin{equation}
  \label{e.flow_contraction}
  \norm{\D_W \Pi_m(W;U,T)}{\kE(\kCb([0,1];\kY_j))}
  \leq T \,  \e^{\omega T} \, M'_j \,,
\end{equation}
so that $\Pi_m$ is a uniform contraction for all $m\geq m_*$, $U \in
\kU$, $W \in \kW_j$, and $T \in \kI = (0,T_*)$ for every $j=0,\ldots,
K$ with a possibly smaller value of $T_*$.
 
Hence, $\Pi_m$ satisfies conditions (i) and (ii) of
Theorem~\ref{t.cm-stab} with bounds and contraction constants which
are uniform in $m\geq m_*$ so that Theorem~\ref{t.cm-stab} (a) implies
that $\Phi_m$ is of class \eqref{e.T-diff-gen}.  The simplified
special case \eqref{e.T-diff-gen.b} is a direct consequence of
Lemma~\ref{l.cm-scale}.
\end{proof}
 
Theorem~\ref{t.local-diff} does not guarantee that $m_*$ and $T_*$ can
be chosen uniformly over $\kD$.  The following theorem states that
such uniformity can be obtained, however, over domains other than
balls at the expense of stepping up on the scale of Hilbert spaces.
The situation is analogous to that for the semiflow $\Phi$; see
\cite[Theorem~2.6 and Remark~2.8]{OW2010}.
 
\begin{theorem}[Regularity of Galerkin truncated semiflow, uniform
version] \label{t.local-diff-unif}  
Assume \textup{(A)} and \textup{(B1)}.  Choose $\delta \in
(0,\delta_*]$ small enough such that $\kD_{K+1}^{-\delta} \neq
\emptyset$.  Then there exists $T_*=T_*(\delta)>0$ and
$m_*(\delta)\in\N$ such that for $m\geq m_*$ the semiflow $(U,t)
\mapsto \Phi^t_m(U)$ of \eqref{e.pde-m} satisfies \eqref{e.T-diff-gen}
with bounds which are uniform for all $U^0 \in \kD_{K+1}^{-\delta}$
with $R=\delta$, and such that
\begin{subequations}
\begin{equation}
  \label{e.T-diff-gen-unif}
    \Phi_m \in \bigcap_{\substack{j+k \leq N \\ \ell \leq k \leq K+1}}
  \kCb^{(\underline j, \ell)} (\kD_{K+1}^{-\delta} \times [0,T_*]; 
  \kY_{k-\ell}) \,
\end{equation}
with bounds which are uniform in $m\geq m_*$.  The bounds on $\Phi_m$,
$m_*$ and $T_*$ depend only on the bounds afforded by \textup{(B0)}
rsp.\ \textup{(B1)}, \eqref{e.etAEstimate}, and \eqref{e.Rk}, and on
$\delta$.  Moreover, $\Phi_m$ maps into $\kD_K$ and, when $N>K+1$,
\begin{equation}
  \label{e.T-diff-gen-unif.b}
  \Phi_m \in \kCb^{K+1}(\kD_{K+1}^{-\delta}\times [0,T_*]; \kY)
\end{equation}
\end{subequations}
with corresponding uniform bounds.
\end{theorem}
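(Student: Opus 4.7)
The plan is to mirror the proof of Theorem~\ref{t.local-diff} but to invoke the uniform companion of Theorem~\ref{t.cm-stab} (part~(b) rather than part~(a)), applied to the same contraction $\Pi_m$ of \eqref{e.Pim}. Keeping the fixed-point scale $\kZ_j = \kC([0,1];\kY_j)$ and balls $\kW_j = \kC([0,1];\kB_\delta^{\kY_j}(U^0))$ for $j=0,\dots,K$, the new ingredient is that the parameter set is now $\kD_{K+1}^{-\delta}$ endowed with its $\kY_{K+1}$-topology --- one rung higher on the Hilbert-space scale than before. This extra rung is precisely what turns the pointwise map-back bounds of \eqref{e.localEx} into bounds uniform in $U^0$.

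For the map-back step, I would split
\begin{equation*}
  \Pi_m(W;U,T)(\tau) - U^0
    = (\e^{\tau T A}-\id) U^0 - \e^{\tau T A}\Q_m U^0
    + \e^{\tau T A}(U-U^0) + T\int_0^\tau \e^{(\tau-\sigma)TA} B_m(W(\sigma)) \,\d\sigma \,.
\end{equation*}
For $j\leq K$, \eqref{e.normA} yields $\norm{(\e^{\tau T A}-\id)U^0}{\kY_j}\leq T\e^{\omega T}\norm{AU^0}{\kY_j}\leq T\e^{\omega T}R_{K+1}$, while \eqref{e.qm-estimate} yields $\norm{\Q_m U^0}{\kY_j}\leq m^{-1}R_{K+1}$, both uniformly in $U^0\in\kD_{K+1}^{-\delta}$. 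The remaining two terms are handled exactly as in \eqref{e.localEx}. Choosing $T_*=T_*(\delta)$ small and $m_*=m_*(\delta)$ large, the sum falls below $\delta$, so $\Pi_m$ maps each $\kW_j$ back into itself uniformly in $U^0$ and $m\geq m_*$. The contraction estimate \eqref{e.flow_contraction} carries over verbatim, and Theorem~\ref{t.cm-stab}~(b) now delivers \eqref{e.T-diff-gen} with uniform bounds; in particular, $\phi_m$ takes values in $\kD_K$.

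To reach the $k=K+1$ stratum of \eqref{e.T-diff-gen-unif} and the $\kCb^{K+1}$ claim \eqref{e.T-diff-gen-unif.b}, I would differentiate the projected equation in time. The derivative $w_m = \partial_t\phi_m^t(U^0)$ satisfies the variational equation $\dot w_m = A w_m + B_m'(u_m)\,w_m$ with $w_m(0) = A\P_m U^0 + B_m(\P_m U^0)$, whose $\kY_K$-norm is controlled by $R_{K+1}+M_K$ uniformly in $m$ and $U^0\in\kD_{K+1}^{-\delta}$. A Gronwall bound in $\kY_K$ on $[0,T_*]$ then gives a uniform estimate on $\partial_t u_m$ in $\kY_K$, and the identity $A u_m = \partial_t u_m - B_m(u_m)$ together with the already controlled $\P u_m$ promotes this to a uniform $\kY_{K+1}$-bound on $u_m$. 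The remaining mixed-derivative bounds in \eqref{e.T-diff-gen-unif} follow by applying the same Gronwall argument to successive derivatives of the variational equation in $U^0$; when $N>K+1$, one further time differentiation yields \eqref{e.T-diff-gen-unif.b}.

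The main obstacle is that strong continuity of $\e^{tA}$ on $\kY_K$ provides only pointwise smallness of $\e^{\tau TA}-\id$, so in the local version $T_*$ had to be adapted to a single $U^0$. Upgrading to uniformity forces the use of the quantitative estimate afforded by assuming $U^0\in\kY_{K+1}$, which is the whole point of stepping up the scale. A second subtlety is that \textup{(B1)} does not make $B$ smooth into $\kY_{K+1}$, so one cannot simply append another rung to the contraction scale; the $\kY_{K+1}$-regularity has to be extracted indirectly through the temporal derivative, as described above.
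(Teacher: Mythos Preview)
Your first step is essentially the paper's: the estimate you write for $(\e^{\tau TA}-\id)U^0$ and $\Q_m U^0$ via $R_{K+1}$ is exactly \eqref{e.exptA-unif}, and inserting it into \eqref{e.localEx} gives $T_*(\delta)$ and $m_*(\delta)$ uniformly. One correction: what you need here is still Theorem~\ref{t.cm-stab}\,(a), not~(b). Part~(b) compares two contractions and bounds $\norm{w-v}{}$; it is not a ``uniform version'' of~(a). The point is that the constants in~(a) are polynomials in $\Norm{\Pi_m}{N+1,K}$ and $(1-c_j')^{-1}$, and these are now uniform in $U^0\in\kD_{K+1}^{-\delta}$ and $m\ge m_*$, so the conclusion of~(a) is uniform. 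With that fix, your argument for the uniform version of \eqref{e.T-diff-gen} matches the paper's.

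For the step up to $k=K+1$ your route diverges from the paper's. You propose to bound $\partial_t u_m$ in $\kY_K$ by a Gronwall on the linearized equation, then recover $Au_m$ and hence $u_m\in\kY_{K+1}$; the paper instead goes straight to $A\phi_m$ via a second (linear) contraction map $\tilde\Pi_m$ \eqref{e.tildePim}, obtained from \eqref{e.PiFlow} by integration by parts, and applies Theorem~\ref{t.cm-stab}\,(a) to it on the \emph{same} scale $\kZ_0,\dots,\kZ_K$. This delivers not just the $\kY_{K+1}$-bound on $u_m$ but the full regularity class \eqref{e.Aphim-unif} for $A\phi_m$ in one stroke---all mixed $(\D_U^j,\partial_t^\ell)$ derivatives included. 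The chain rule (Lemma~\ref{l.diff}\,(a)) then puts $B_m\circ\phi_m$, hence $\partial_t\phi_m$, in the same class, and Lemma~\ref{l.technical1} (not a further time differentiation) combines the $L=1$ information from $A\phi_m$ with the $L=0$ information from $\partial_t\phi_m$ to yield \eqref{e.T-diff-gen-unif}; \eqref{e.T-diff-gen-unif.b} is just Lemma~\ref{l.cm-scale}. Your Gronwall outline gets the $\ell=0$, $k=K+1$ case, but the sentence ``the remaining mixed-derivative bounds follow by the same Gronwall argument'' hides exactly the bookkeeping that the abstract machinery is built to do: you would need to show, for each admissible $(j,\ell,k)$ with $k\le K+1$, that $\D_U^j\partial_t^\ell\phi_m$ lands in $\kY_{k-\ell}$ with uniform bounds, and a bare Gronwall on higher $U$-variational equations does not automatically give you the correct landing spaces across the scale.
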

 
\begin{proof}
We continue to work in the setting introduced in the proof of
Theorem~\ref{t.local-diff}.  Here, we need to verify that the
conditions of Theorem~\ref{t.cm-stab} are satisfied uniformly in $U^0
\in \kD_{K+1}^{-\delta}$ for both $\Pi_m$ and $\tilde \Pi_m$.  First,
due to \textup{(B1)}, each of the $\Pi_m$ is well-defined as a map
from $\kW_j\times\kU\times\kI$ into $\kZ_j$ for $U^0 \in
\kD_{K+1}^{-\delta}$ and has the required regularity.  To show that
there is $m_*(\delta)$ such that $\Pi_m$ maps $\kW_0, \dots, \kW_K$
back into itself, we apply \eqref{e.localEx} for every $U^0 \in
\kD_{K+1}^{-\delta}$.  We bound the first line on the right-hand side of
\eqref{e.localEx} by
\begin{multline}
  \norm{\e^{\tau T A} U^0 - U^0}{\kY_j} 
  + \norm{\e^{\tau T A}(\P_m U^0 - U^0)}{\kY_j} \\
  \leq \max_{t \in [0,T]}  
       \bigl(
         T \, \norm{A\e^{t A} U^0 }{\kY_j}
         + \norm{\e^{t A}\Q_m U^0 }{\kY_j}
       \bigr)
  \leq \e^{\omega T} \, R_{K+1} \, (T + 1/m) \,,
  \label{e.exptA-unif}
\end{multline}
where $R_{K+1}$ is defined in \eqref{e.Rk} and $j=0,\ldots, K$.
Inserting this estimate into \eqref{e.localEx}, we see that we can
choose $T_*>0$ small enough such that $\Pi_m(\,\cdot\,; U, T)$ maps
$\kB_{{R}}^{\kY_j}(U^0)$ with $R=\delta$ into itself for all $U^0 \in
\kD_{K+1}^{-\delta}$, $U\in\kU$, $m\geq m_*$, $T \in [0,T_*]$, and
$j=0,\ldots, K$.  Hence, $\Pi_m$ satisfies the conditions of
Theorem~\ref{t.cm-stab} with bounds which are uniform in $m\geq m_*$,
$T \in (0,T_*)$, and $U^0 \in \kD_{K+1}^{-\delta}$. This shows that
\eqref{e.T-diff-gen} holds uniformly for $U^0 \in \kD_{K+1}^{-\delta}$
and $m\geq m_*(\delta)$.

Next, we show that 
\begin{equation}
  \label{e.Aphim-unif}
  A \Phi_m \in \bigcap_{\substack{j+k \leq N \\ \ell \leq k \leq K}}
    \kCb^{(\underline j, \ell)} (\kD_{K+1}^{-\delta} \times [0,T_*]; 
    \kY_{k-\ell})
\end{equation}
with uniform bounds in $m\geq m_*$.  Consider the linear fixed point
equation $\tilde{W}_m = \tilde \Pi_m(\tilde{W}_m; U, T)$ with
\begin{align}
 \tilde \Pi_m(\tilde{W}_m; U, T)(\tau) 
  & = \e^{\tau T A} \, (A \P_m U + B(W_m(0))) - B(W_m(\tau)) 
      \notag \\
  & \quad 
      + T \int_0^{\tau} \e^{(\tau-\sigma)TA} \, 
        \D B_m(W_m(\sigma)) (\tilde{W}_m(\sigma) 
        + B(W_m(\sigma))) \, \d\sigma 
  \label{e.tildePim}
\end{align}
where $W_m(U,T)(\tau) = \Phi_m^{\tau T}(U)$.  Integrating the right
hand side of \eqref{e.PiFlow} by parts, replacing $B$ with $B_m$ and
$U^0$ by $\P_m U^0$ we see that the fixed point $\tilde{W}_m$ of
$\tilde\Pi_m$ satisfies $\tilde{W}_m=AW_m$ in $\kZ_j$ for $j=0,
\ldots, K$.  We consider $\tilde\Pi_m$ with $\kU$, $\kZ_j$ and $\kI$
as before, and set $\kW_j = \kC([0,1];\kB_r^{\kZ_j}(0))$ with $r>0$
large enough that $\tilde\Pi_m(\cdot, U,T)$ maps $\kW_j$ into itself
for $m\geq m_*$, $U\in \kU$, $T \in \kI$.  Since $\Phi_m$ is of class
\eqref{e.T-diff-gen}, Lemma~\ref{l.diff} (a) and
Lemma~\ref{l.DiffhatPiSigma} (a) imply that $\tilde\Pi_m$ satisfies
the conditions of Theorem~\ref{t.cm-stab} with $N$ replaced by
$N-2$. Therefore Theorem~\ref{t.cm-stab} (a) applies and proves
\eqref{e.Aphim-unif}.
 
Moreover, $B_m \circ \Phi_m$ is of class \eqref{e.Aphim-unif} with
uniform bounds for $m\geq m_*$ due to the chain rule,
Lemma~\ref{l.diff} (a) and the fact that $\Phi_m$ is of class
\eqref{e.T-diff-gen} with uniform bounds in $U^0$ and $m\geq m_*$.  We
conclude that $\partial_t \Phi_m = A \Phi_m +B_m\circ \Phi_m $ is also
of class \eqref{e.Aphim-unif} with uniform bounds for $m\geq m_*$.

Finally, as both $A \Phi_m $ and $\partial_t \Phi_m $ are of class
\eqref{e.Aphim-unif}, Lemma~\ref{l.technical1} implies
\eqref{e.T-diff-gen-unif}.  The simplified special case
\eqref{e.T-diff-gen-unif.b} is a direct consequence of
Lemma~\ref{l.cm-scale}.
\end{proof}

%%%%%%%%%%%%%%%%%%%%%%%%%%%%%%%%%%%%%%%%%%%%%%%%%%%%%%%%%%%%%%%%%%%%%%%

\subsection{Accuracy of derivatives of Galerkin truncated semiflow}

In this section, we consider how the perturbation of the contraction
map $\Pi$ introduced by the projection of the evolution equation
\eqref{e.pde} onto the subspace $\P_m \kY$ propagates into derivatives
of the resulting semiflow.
 
As before, we consider a local and a uniform version of each result;
the scales we use are defined, separately for the two cases, as
follows.  In the local version, we follow the setting of
Theorem~\ref{t.local-diff}, where we consider initial data
\begin{subequations}
  \label{e.localWSpaces}
\begin{equation}
  \label{e.localUSpace}
  U \in \kU \equiv \kB^{\kX}_{R_*}(U^0)
  \qquad \text{where} \qquad
  \kX =\kY_K \,.
\end{equation}
The semiflows are considered as maps
\begin{equation}
  \label{e.localPsiPhiSpace}
  \Phi^t, \Phi^t_m  \colon \kB^{\kY_K}_{R_*}(U^0) \to \kY_j
  \quad\mbox{for} \quad j = 0, \dots, K,
\end{equation}
\end{subequations}  
where $m \geq m_*(\delta,U^0)$ and $R_*=R/2$.
 
In the uniform version, we follow the setting of
Theorem~\ref{t.local-diff-unif} where we consider initial data
\begin{subequations}
  \label{e.uniformWSpaces}
\begin{equation}
  U \in \kU = \kD^{-\delta}_{K+1} \subset \kX \equiv \kY_{K+1} \,.
\end{equation}
The semiflows are considered as maps
\begin{equation}
  \label{e.uniformPsiSpaces}
  \Phi^t, \Phi^t_m  \colon \kD^{-\delta}_{K+1} \to \kY_{j}
 \quad\mbox{ for}\quad j = 0, \dots, K+1,
\end{equation}
\end{subequations}
for some fixed $\delta>0$ where $m \geq m_*(\delta)$.
   
To handle the complexity of these estimates it is useful to define
norms on the various objects that contain all combinatorially possible
orders of differentiation and scale rungs subject to certain relevant
side constraints.  The need to consider such norms arises through the
implicit nature of the definition of the semiflow and the use of the
chain rule.  Here, any attempt to estimate a particular derivative on
a particular rung of the scale will produce terms of all intermediate
orders of differentiation and scale rungs.  We therefore estimate all
derivatives at once.

We have to deal with two different types of objects: contraction maps
which are functions of three arguments whose corresponding norms are
denoted $\Norm{\, \cdot \,}{}$ and semiflows which are functions of
two arguments whose corresponding norms are denoted $\tnorm{\, \cdot
\,}{}$.

In our setting, there are two natural global parameters, namely $N$,
the degree of differentiability of the nonlinearity, and $K$, the
number of rungs on our scale as defined in condition (B1).  Two more
characteristic parameters are needed.  First, the \emph{loss index}
$S$ which forces the image of the map be estimated at least $S$ rungs
down the scale.  We will see that a loss of $S$ scale rungs translates
into $O(m^{-S})$-smallness of the perturbation caused by the projector
$\P_m$.  Second, a \emph{lowest rung index} $L$ which forces the
estimation of the image of the map to occur at least $L$ rungs up from
the bottom of the scale.  This leads us to define a four parameter
family of norms for functions $\Pi = \Pi(w;u,\mu)$ mapping $\kW_{k+S}
\times \kU \times \kI$ to $\kZ_{k-\ell}$,
\begin{subequations}
\begin{gather}
  \label{e.norm4Pi}
  \Norm{\Pi}{N,K,L,S} 
  = \max_{\substack{i+j+k \leq N-S \\ 
          L+\ell \leq k \leq K-S}}
    \norm{\D_w^i \D_u^j \partial_\mu^\ell \Pi}{\kL_\infty
          (\kW_{k+S} \times \kU \times \kI;
           \kE^i(\kZ_{k+S},\kE^j(\kX; \kZ_{k-\ell})))}
\end{gather}
for $0 \leq L \leq K-S \leq N-S$.  When studying semiflows, we
identify $w=W$, $u=U$, $\mu=T$, $\kI = (0,T_*)$, $\kZ_j =
\kC([0,1];\kY_j)$, $\kU=\kB_{R/2}^{\kY_j}(U^0)$, $\kX=\kY_K$, and
$\kW_j = \kC([0,1];\kB_{R}^{\kY_j}(U^0))$, and use $\Pi$ defined by
\eqref{e.PiFlow}.  We abbreviate
\begin{gather}
  \label{e.norm3Pi} 
  \Norm{\Pi}{N,K,L} = \Norm{\Pi}{N,K,L,0} \,, \\
  \label{e.norm2Pi} 
  \Norm{\Pi}{N,K} = \Norm{\Pi}{N,K,0,0} \,.
\end{gather}
Functions $w=w(u,\mu)$ mapping $\kU \times \kI$ to $\kZ_j$ are
equipped with the three parameter family of norms
\begin{gather}
  \label{e.norm3w}
  \norm{w}{N,K,L} 
  = \max_{\substack{j+k \leq N \\ 
          L+\ell \leq k \leq K}}
    \norm{\D_u^j \partial_\mu^\ell w}{\kL_\infty
          (\kU \times \kI; \kE^j(\kX; \kZ_{k-\ell}))} 
\intertext{for $0 \leq L \leq K \leq N$, where we abbreviate}
  \label{e.norm2w} 
  \norm{w}{N,K} = \norm{w}{N,K,0} \,.
\end{gather}
\end{subequations}
With this notation, a function $(u,\mu) \mapsto \Pi(u,\mu)$ that does
not depend on $w$ satisfies
\begin{equation}
  \label{e.3Norm2Norm}
  \Norm{\Pi}{N,K,L,S} = \norm{\Pi}{N-S,K-S,L} \,.
\end{equation}

The next pair of results concerns the stability of the semiflow and
its derivatives under spectral truncation.

\begin{theorem}[Projection error for the semiflow, local version]
\label{t.flow-projection-error-scale} 
Assume \textup{(A)} and \textup{(B1)} and $R \in (0,\delta_*]$ small
enough such that $ \kD_K^{-R}\neq \emptyset$ and pick $U^0
\in\kD_K^{-R}$.  Let $T_*=T_*(R,U^0)$ and $m_*=m_*(R,U^0)$ be as in
Theorem~\ref{t.local-diff}.  Then, for every $0 \leq P \leq K$,
\begin{equation}
  \norm{\Phi - \Phi_m}{N-P-1,K-P} = O(m^{-P})
  \label{e.phi-m-s}
\end{equation}
where the norm in \eqref{e.phi-m-s} is defined with respect to the
spaces \eqref{e.localWSpaces}.  The order constants depend only on the
bounds afforded by \textup{(B1)} and \eqref{e.etAEstimate}, on $U^0$,
and on $R$.
\end{theorem}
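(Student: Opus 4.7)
The plan is to realize $\Phi$ and $\phi_m$ as fixed points of the contraction maps $\Pi$ and $\Pi_m$ from \eqref{e.PiFlow} and \eqref{e.Pim} and to apply the abstract stability theorem for fixed points of perturbed contractions on a scale of Banach spaces from the appendix (a perturbation version of Theorem~\ref{t.cm-stab}), after establishing a quantitative bound on the perturbation $\Pi - \Pi_m$ in the four-parameter norm $\Norm{\,\cdot\,}{\,\cdot\,}$. The key starting point is the pointwise identity
\[
  \Pi(W;U,T) - \Pi_m(W;U,T) = \Q_m \Pi(W;U,T),
\]
which follows from \eqref{e.PiFlow} and \eqref{e.Pim} together with the fact that $\P_m$ commutes with $A$ and hence with $\e^{tA}$: both the $U$-dependence of $\Pi$ and the Duhamel term factor through $\Q_m$.

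Next I will convert this identity into a quantitative norm bound. Fix $0 \leq P \leq K$. Iterating \eqref{e.qm-estimate} gives $\norm{\Q_m V}{\kY_j} \leq m^{-P} \norm{V}{\kY_{j+P}}$ for every $V \in \kY_{j+P}$. I apply this pointwise to every mixed partial derivative $\D_W^i \D_U^j \partial_T^\ell \Pi$; uniform boundedness at the higher scale $\kY_{j+P}$ follows from (B1) and \eqref{e.etAEstimate} by the same computations used in the proof of Theorem~\ref{t.local-diff}. Tracking the index shifts hardwired into \eqref{e.norm4Pi}, this gives, schematically,
\[
  \Norm{\Pi - \Pi_m}{N-1, K, 0, P} \leq m^{-P} \, \Norm{\Pi}{N-1, K, P, 0},
\]
with the right-hand side finite and independent of $m$. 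The restriction $0 \leq P \leq K$ is exactly what is required for the higher-scale norm on the right to make sense under (B1), since $\Pi$ must remain bounded when measured $P$ rungs higher than its output.

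Finally I feed this perturbation bound into the abstract stability theorem. Both $\Pi$ and $\Pi_m$ satisfy the uniform contraction hypotheses of Theorem~\ref{t.cm-stab} in $m \geq m_*$, with constants depending only on the data listed in the statement, as already verified in the proof of Theorem~\ref{t.local-diff}. The abstract theorem then transfers a perturbation in the $(N-1, K, 0, P)$-norm into a fixed-point difference with one order of differentiability lost and the image-scale cap reduced by $P$, yielding
\[
  \norm{\Phi - \phi_m}{N-P-1, K-P} \leq C \, \Norm{\Pi - \Pi_m}{N-1, K, 0, P} = O(m^{-P}),
\]
which is the claim. The main obstacle I anticipate is the combinatorial bookkeeping of the four index parameters: one must check that the shift $(N, K, L, S) = (N-1, K, 0, P)$ in the perturbation norm corresponds, under the abstract theorem, to the output indices $(N-P-1, K-P)$, and that every composition with $B$ and $\e^{tA}$ appearing in the derivatives of $\Pi$ remains uniformly bounded at the higher scale $\kY_{k+P}$ under (B1). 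Once this accounting is in place, the result follows from routine application of the abstract machinery.
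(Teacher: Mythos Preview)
Your proof is correct and follows the same overall strategy as the paper: verify the hypotheses of Theorem~\ref{t.cm-stab} for $\Pi$ and $\Pi_m$ (already done in the proof of Theorem~\ref{t.local-diff}), establish $\Norm{\Pi-\Pi_m}{N-1,K,0,P}=O(m^{-P})$, and then read off \eqref{e.phi-m-s} from Theorem~\ref{t.cm-stab}(b). Your treatment of the perturbation bound is in fact cleaner than the paper's: the paper splits $\Pi-\Pi_m$ into the linear piece $G_m=\Q_m\e^{\tau TA}U$ and the Duhamel piece $I_m$ and routes each through Lemma~\ref{l.diff} (with somewhat artificial identifications $\Pi=\id$, $\Sigma=\P_m$) together with Lemma~\ref{l.technical3}, whereas you observe directly that commutation of $\P_m$ with $\e^{tA}$ gives the global identity $\Pi-\Pi_m=\Q_m\Pi$, so that linearity of $\Q_m$ and \eqref{e.qm-estimate} yield the one-line estimate $\Norm{\Q_m\Pi}{N-1,K,0,P}\le m^{-P}\Norm{\Pi}{N-1,K,P,0}\le m^{-P}\Norm{\Pi}{N-1,K}$ by the index substitution $k'=k+P$; this bypasses the composition machinery entirely.
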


\begin{proof}
We apply Theorem~\ref{t.cm-stab} to obtain a bound on $\tnorm{\Phi -
\Phi_m}{N-P-1,K-P}$ in terms of $\Norm{\Pi - \Pi_m}{N-1,K,0,P}$, with
$\kZ_k$, $\kX$ etc.\ specified above.  We already verified conditions
(i) and (ii) of Theorem~\ref{t.cm-stab} in the proof of
Theorem~\ref{t.local-diff}.  Thus, in order to prove
\eqref{e.phi-m-s}, it suffices to show that
\begin{equation}
  \label{e.pi-difference}
  \Norm{\Pi-\Pi_m}{N-1,K,0,P} = O(m^{-P}) \,.
\end{equation} 
Writing
\begin{equation}
  (\Pi - \Pi_m)(W; U, T)(\tau)
  = G_m(U,T)(\tau) + I_m(W,T)(\tau) 
  \label{e.flow-splitting}
\end{equation}
with
\begin{gather*}
  G_m(U,T)(\tau) 
  = \Q_m \e^{T \tau A} U \,, \\
  I_m(W,T)(\tau) 
  = \Q_m \int_{0}^\tau T \e^{(\tau-\sigma)TA} \,
    B(W(\sigma)) \, \d\sigma \,,
\end{gather*}
we apply Lemma~\ref{l.diff} to both terms on the right-hand side of
\eqref{e.flow-splitting} in different ways.  For the first term, since
$G_m$ does not depend on $W$, we can apply Lemma~\ref{l.diff} with
$\Pi=\id$, $\Sigma=\P_m$, and $v(U,T)(\tau) = w(U,T)(\tau) = \e^{\tau
TA} U$, so that there exists $c_1>0$ such that
\[
  \Norm{G_m}{N-1,K,0,P}
  = \norm{G_m}{N-1-P,K-P}
  \leq c_1 \, \Norm{\Q_m}{N-1,K,0,P}
  = O(m^{-P}) \,,
\]
where the first equality is due to \eqref{e.3Norm2Norm}.  Here, and
further below, we implicitly make use of estimate
\eqref{e.etAEstimate} on the bound of the linear semigroup and
estimate \eqref{e.qm-estimate} on the Galerkin remainder.

To apply Lemma~\ref{l.diff} to the second term on the right-hand side of
\eqref{e.flow-splitting}, we identify $N$ and $K$ there with $N-1$ and
an arbitrary $\kappa \in P, \dots, K$ here.  Then, setting $\Pi =
\id$, $\Sigma=\P_m$, $u=W \in \kU_\kappa \equiv \kC([0,1];
\kB_{R}^{\kY_\kappa}(U^0))$, and
\[
  v(u,T)(\tau) = w(u,T)(\tau) = 
  \int_0^\tau T \e^{(\tau-\sigma)TA} B(W(\sigma)) \, \d\sigma \,,
\]
Lemma~\ref{l.diff} asserts that there exists $c_2 > 0$ such that
\[
  \norm{I_m}{N-1-P,\kappa-P}
  \leq c_2 \, \Norm{\Q_m}{N-1,\kappa,0, P} 
  = O(m^{-P}) \,.
\]
Then, by Lemma~\ref{l.technical3}, with $N$ replaced by $N-1$ and
$S=P$,
\[
  \Norm{I_m}{N-1,K,0,P} = O(m^{-P}) \,.
\]
The constants $c_1$ and $c_2$ depend only on the bounds on $B$ from
(B1) and the bounds from \eqref{e.etAEstimate}.  Altogether, this
verifies \eqref{e.pi-difference}, thus concludes the proof.
\end{proof}

\begin{theorem}[Projection error for the semiflow, uniform version]
\label{t.flow-projection-error-scale-unif} 
Assume \textup{(A)} and \textup{(B1)} with $N>K+1$, and let $\delta
\in (0,\delta_*]$ small enough such that $\kD_{K+1}^{-\delta}$ is
nonempty.  Let $T_*=T_*(\delta)>0$ and $m_*=m_*(\delta)$ be as in
Theorem~\ref{t.local-diff-unif}.  Then, for every $0 \leq P \leq K+1$,
\begin{equation}
  \norm{\Phi - \Phi_m }{N-P-1,K+1-P} = O(m^{-P}) \,,
  \label{e.phi-m-s-unif}
\end{equation} 
where the norm in \eqref{e.phi-m-s-unif} is defined with respect to
the spaces \eqref{e.uniformWSpaces}.  The order constants depend only
on $\delta$, and on the bounds afforded by \textup{(B1)},
\eqref{e.etAEstimate}, and \eqref{e.Rk}.
\end{theorem}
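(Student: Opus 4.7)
The plan is to adapt the proof of Theorem~\ref{t.flow-projection-error-scale} to the uniform setting of Theorem~\ref{t.local-diff-unif} and to augment the argument with the auxiliary contraction for $A\phi_m$ from \eqref{e.tildePim} in order to pick up the extra top rung. The fundamental obstruction to a one-shot argument is that assumption (B1) only asserts $B \colon \kY_K \to \kY_K$, so the contraction $\Pi_m$ of \eqref{e.Pim} cannot be set up directly on the scale $\kZ_0,\ldots,\kZ_{K+1}$; the top rung $K+1$ must be reached through the auxiliary contraction satisfied by $A\phi_m$.

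For the range $0 \leq P \leq K$, I would apply Theorem~\ref{t.cm-stab} to $\Pi$ and $\Pi_m$ on the scale $\kZ_0,\ldots,\kZ_K$ with $\kU = \kD^{-\delta}_{K+1}$ and $\kX = \kY_{K+1}$. Conditions (i) and (ii) of Theorem~\ref{t.cm-stab} hold uniformly in $m \geq m_*$ and $U^0 \in \kD^{-\delta}_{K+1}$ by the proof of Theorem~\ref{t.local-diff-unif}, and the residual estimate $\Norm{\Pi - \Pi_m}{N-1, K, 0, P} = O(m^{-P})$ is obtained exactly as in the proof of Theorem~\ref{t.flow-projection-error-scale}: split $\Pi - \Pi_m = G_m + I_m$ as in \eqref{e.flow-splitting} and apply Lemma~\ref{l.diff} together with Lemma~\ref{l.technical3}. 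This yields $\norm{\Phi - \phi_m}{N-P-1, K-P} = O(m^{-P})$.

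To add the extra top rung, I would apply Theorem~\ref{t.cm-stab} to the auxiliary contractions $\tilde\Pi$ and $\tilde\Pi_m$ from \eqref{e.tildePim}, whose unique fixed points are $\tilde W = A W$ and $\tilde w_m = A w_m$. Conditions (i) and (ii) hold uniformly (with $N$ replaced by $N-2$) by the proof of Theorem~\ref{t.local-diff-unif}. The residual estimate $\Norm{\tilde\Pi - \tilde\Pi_m}{N-2, K, 0, P} = O(m^{-P})$ is obtained by decomposing $\tilde\Pi - \tilde\Pi_m$ into the initial-datum perturbation $\e^{\tau T A}(\id - \P_m) A U$, the operator perturbation arising from $\Q_m B(w_m)$ and $\Q_m \D B(w_m)$ in the integrand, and the data perturbation $W - w_m$ controlled by the previous step, and then applying Lemma~\ref{l.diff}, Lemma~\ref{l.DiffhatPiSigma}, and Lemma~\ref{l.technical3}. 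The resulting bound $\norm{A\Phi - A\phi_m}{N-P-2, K-P} = O(m^{-P})$ combines with the previous step via the norm identity $\norm{U}{\kY_{k+1}}^2 = \norm{\P U}{\kY}^2 + \norm{|A|^{k+1} \Q U}{\kY}^2$ to yield $\norm{\Phi - \phi_m}{N-P-1, K+1-P} = O(m^{-P})$ for $0 \leq P \leq K$. The endpoint $P = K+1$ is a zeroth-rung bound $\norm{\Phi - \phi_m}{N-K-2, 0} = O(m^{-K-1})$, which is provided by Corollary~\ref{t.wpp.unif} at $j=0$ and by running the same Gronwall argument on the variational equations that determine the higher derivatives $\D_u^j (\Phi - \phi_m)$.

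The main obstacle is the combinatorial index bookkeeping in the auxiliary-contraction step, where $\tilde\Pi$ depends nonlinearly on $W$ through both $B(W)$ and $\D B(W)$; each derivative produced by the chain rule and by Lemma~\ref{l.DiffhatPiSigma} must be placed on the correct rung of the four-parameter norm $\Norm{\cdot}{N,K,L,S}$ so that the $P$ scale rungs lost to $\Q_m$ translate into uniform $O(m^{-P})$ decay of the fixed-point difference, rather than being absorbed into a constant.
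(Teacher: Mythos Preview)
Your first two steps match the paper's proof exactly: establish \eqref{e.phi-m-s-unif-YK} on the $K$-scale via Theorem~\ref{t.cm-stab} applied to $\Pi,\Pi_m$, then obtain $\tnorm{A\Phi-A\phi_m}{N-P-2,K-P}=O(m^{-P})$ via Theorem~\ref{t.cm-stab} applied to $\tilde\Pi,\tilde\Pi_m$.

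The gap is in your combination step. The norm identity you invoke only converts the $A$-bound into control of $\D_u^j\partial_\mu^\ell(\Phi-\phi_m)$ in $\kZ_{k-\ell}$ for indices with $\ell+1\leq k\leq K+1-P$; in the paper's notation this is precisely $\tnorm{\Phi-\phi_m}{N-P-1,K+1-P,1}$, i.e.\ the $L=1$ norm \eqref{e.APhiphim}. It does \emph{not} yield the full $L=0$ norm $\tnorm{\,\cdot\,}{N-P-1,K+1-P}$, because the terms with $\ell=k$, in particular the top time derivative $\partial_\mu^{K+1-P}(\Phi-\phi_m)$ in $\kZ_0$, are not captured by spatial regularity of $A(\Phi-\phi_m)$ alone. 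The paper closes this gap by a third, separate estimate: writing $\partial_t(\Phi-\phi_m)=A(\Phi-\phi_m)+B\circ\Phi-B_m\circ\phi_m$ and bounding $\tnorm{\partial_t\Phi-\partial_t\phi_m}{N-P-2,K-P}=O(m^{-P})$ via Lemma~\ref{l.diff} applied to $B\circ\Phi-B_m\circ\phi_m$. Only then do the three ingredients \eqref{e.phi-m-s-unif-YK}, \eqref{e.APhiphim}, \eqref{e.dtPhi-phim} feed into Lemma~\ref{l.technical1} with $L=0$ to produce the claimed $\tnorm{\Phi-\phi_m}{N-P-1,K+1-P}$.

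Your endpoint treatment of $P=K+1$ via an ad hoc Gronwall argument on variational equations is also not how the paper proceeds; the paper obtains all $P$ in the range $0\leq P\leq K+1$ uniformly through the Lemma~\ref{l.technical1} assembly, without a separate endpoint argument.
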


\begin{proof}
First, we show that
\begin{equation}
  \label{e.phi-m-s-unif-YK}
  \norm{\Phi - \Phi_m}{N-P-1,K-P} = O(m^{-P}) \,,
\end{equation}
for $0\leq P\leq K$ on the scale $\{\kZ_j\}_{j=0,\ldots, K}$.  To this
end, define $\Pi$ and $\Pi_m$, $\kU$, $\kW_j$ etc., with $R=\delta$ as
in the proof of Theorem~\ref{t.flow-projection-error-scale}.
 
We have already shown in the proof of Theorem~\ref{t.local-diff-unif}
that conditions (i) and (ii) of Theorem~\ref{t.cm-stab} hold uniformly
in $m\geq m_*$ and $U^0 \in \kD_{K+1}^{-\delta}$.  Moreover,
\eqref{e.pi-difference} holds true uniformly in $U^0 \in
\kD_{K+1}^{-\delta}$.  This is easily verified by checking that each
of the estimates in the proof of
Theorem~\ref{t.flow-projection-error-scale} holds uniformly under the
conditions of Theorem~\ref{t.flow-projection-error-scale-unif}.
Hence, Theorem~\ref{t.cm-stab} (b) implies \eqref{e.phi-m-s-unif-YK}.
 
Next, we apply Theorem~\ref{t.cm-stab} to obtain a bound on
$\Norm{\tilde\Pi - \tilde\Pi_m}{N-2,K,0,P}$, where $\tilde\Pi_m$ is
from \eqref{e.tildePim} and $\tilde\Pi$ is defined correspondingly.
We have shown in the proof of Theorem~\ref{t.local-diff-unif} that
$\tilde\Pi_m$ (and hence also $\tilde\Pi$) satisfy the conditions of
Theorem~\ref{t.cm-stab} uniformly for $m\geq m_*$.  Estimating each
term of the corresponding analogue to \eqref{e.flow-splitting} via
Lemma~\ref{l.diff} and Lemma~\ref{l.DiffhatPiSigma}, we find that
$\Norm{\tilde\Pi - \tilde\Pi_m}{N-2,K,0,P} = O(m^{-P})$.  Then,
Theorem~\ref{t.cm-stab} (b) implies that $\tnorm{A\Phi -
A\Phi_m}{N-P-2,K-P} = O(m^{-P})$ so that, for $0\leq P\leq K$,
\begin{equation}
  \label{e.APhiphim}
  \norm{\Phi - \Phi_m}{N-P-1,K-P+1,1} = O(m^{-P}) \,.
\end{equation}

Finally, we prove that
\begin{equation}
  \label{e.dtPhi-phim}
  \norm{\partial_t \Phi -\partial_t \Phi_m}{N-P-2,K-P} = O(m^{-P}) \,.
\end{equation}
We note that $\partial_t (\Phi - \Phi_m) = A (\Phi-\Phi_m) + B \circ
\Phi- B_m \circ \Phi_m$, where the required bound on the first term on
the right-hand side has already been established.  For the second term, we use
Lemma~\ref{l.diff} with $\Pi = B$, $\Sigma=B_m$, $\mu=t \in
\kI=(0,T_*)$, $u=U^0 \in \kU = \kD_{K+1}^{-\delta} $,
$w(U^0,t)=\Phi^t(U^0)$, $v(U^0,t)=\Phi_m^t(U^0)$, $\kX=\kY_{K+1}$,
$\kZ_j = \kY_j$, and $\kW_j = \kD_j$ for $j=0,\ldots, K$.  Hence,
there exists a constant $c_1$ such that
\[
  \norm{B\circ \Phi - B_m \circ \Phi_m}{N-2-P,K-P} 
  \leq c_1 \, \Norm{\Q_m B}{N-2,K,0,P} 
     + c_1 \, \norm{\Phi - \Phi_m}{N-2-P,K-P} \,.
\]
To estimate $\Norm{\Q_m B}{N-2,K,0,P}$, we apply Lemma~\ref{l.diff}
for each $\kappa \in P, \dots, K$ with $\Pi = \id$, $\Sigma=\P_m$,
$u=W$, $v(u,h) = w(u,h) = B(W)$, $\kU$ replaced by $\kU_\kappa \equiv
\kW_\kappa$, $N$ replaced by $N-1$, and $K$ replaced by $\kappa$.
Hence, there is some constant $c_2$ such that
\[
  \norm{\Q_m  B(W)}{N-2-P,\kappa-P}
  \leq c_2 \, \Norm{\Q_m}{N-2,\kappa,0, P} = O(m^{-P}) \,.
\]
Then, Lemma~\ref{l.technical3} with $N$ replaced by $N-1$ and $S=P$
implies
\[
  \Norm{\Q_m  B}{N-2,K,0,P} = O(m^{-P}) \,.
\]
Altogether, this proves \eqref{e.dtPhi-phim}.  Finally,
\eqref{e.phi-m-s-unif} follows from Lemma~\ref{l.technical1} with
$L=0$ due to \eqref{e.phi-m-s-unif-YK}, \eqref{e.APhiphim}, and
\eqref{e.dtPhi-phim}.
\end{proof}

%%%%%%%%%%%%%%%%%%%%%%%%%%%%%%%%%%%%%%%%%%%%%%%%%%%%%%%%%%%%%%%%%%%%%%%%
%%%%%%%%%%%%%%%%%%%%%%%%%%%%%%%%%%%%%%%%%%%%%%%%%%%%%%%%%%%%%%%%%%%%%%%%
%%%%%%%%%%%%%%%%%%%%%%%%%%%%%%%%%%%%%%%%%%%%%%%%%%%%%%%%%%%%%%%%%%%%%%%%

\section{A-stable Runge--Kutta methods under Galerkin truncation}
\label{s.rk}

In this section, we study a class of A-stable Runge--Kutta methods
that are well-defined when applied to the semilinear PDE
\eqref{e.pde} under assumptions (A) and (B1).  We prove regularity of
spectral Galerkin approximations of such methods uniformly in the
spatial resolution and derive estimates for the approximation error.
The class of methods we consider is the same as in \cite{OW2010}.

Applying an $s$-stage Runge--Kutta method to the semilinear evolution
equation \eqref{e.pde}, we obtain
\begin{subequations} \label{e.as-rk}
\begin{align}
  W   & = {U}^0 \, \1 + h \, {\a} \, 
          \bigl( {A} {W} + {B}(W) \bigr) \,,
          \label{eq:RKStagesIter} \\
  U^1 & = U^0 + h \, {\b}^T \, 
          \bigl( {A} W + {B}(W) \bigr) \,. 
          \label{eq:RKUpdate}
\end{align}
\end{subequations}
For $U \in \kY$ we write
\[
  \1 \,U= 
  \begin{pmatrix}
   U \\ \vdots \\ U
  \end{pmatrix} \in \kY^s \,,
  \quad
  W = 
  \begin{pmatrix}
    W^1 \\ \vdots \\ W^s
  \end{pmatrix} \,,
  \quad
  {B}(W) = 
  \begin{pmatrix}
    B(W^1) \\ \vdots \\ B(W^s)
  \end{pmatrix} \,,
\]
where $W^1, \dots, W^s$ are the stages of the Runge--Kutta method,
\[
  (\a W)^i = \sum_{j=1}^s \a_{ij} \, W^j \,,
  \qquad
  \b^T W  = \sum_{j=1}^s \b_{j} \, W^j \,,
\]
and $A$ acts diagonally on the stages, i.e., $({A}W)^i = A W^i$ for
$i=1,\dots,s$. 
 
A more suitable form, required later, is achieved by rewriting
\eqref{eq:RKStagesIter} as
\begin{equation}
  \label{eq:newRKStagesIter}
  W = \Pi(W; U, h) 
    \equiv (\id - h  {\a} {A} )^{-1} \, 
    (\1 U + h  {\a}  {B}(W)) 
\end{equation}
and  
\begin{align}
  \Psi^h(U)  = \mS(hA) U + h {\b}^T \, (\id - h \a A)^{-1} \, B(W(U,h)) \,,
  \label{eq:NewRKUpdate}
\end{align}
where $S$ is the so-called \emph{stability function}
\begin{equation}
  \label{e.SzRK}
  \mS(z) = 1 + z \b^T \, (\id - z \a)^{-1} \, \1 \,.
\end{equation}

We now make a number of assumptions on the method and its interaction
with the linear operator $A$.  First, we assume that the method is
A-stable in the sense of \cite{LubOst93}.  Setting $\C^- = \{ z \in \C
\colon \Re z \leq 0\}$, the conditions are as follows.
\begin{itemize}
\item[(RK1)] The stability function \eqref{e.SzRK} is bounded with
$\vert \mS(z) \rvert \leq 1$ for all $z \in \C^-$.
\item[(RK2)] The $(s, s)$-matrices $\id - z \a$ are invertible for
all $z \in \C^-$.
\end{itemize}
Sometimes, we will also assume that $\a$ is invertible.
Gauss--Legendre Runge--Kutta methods satisfy conditions \textup{(RK1)}
and \textup{(RK2)} with $\a$ invertible \cite[Lemma 3.6]{OW2010}.

We now summarize the analytic properties of the operators appearing in
\eqref{eq:newRKStagesIter} and \eqref{eq:NewRKUpdate}, where we use
the convention $\tnorm{W}{\kY_\ell^s} = \max_{j=1}^s
\tnorm{W^j}{\kY_\ell}$.  Proofs can be found in
\cite[Section~3.2]{OW2010}.

\begin{lemma} \label{l.RKShA}
Assume \textup{(RK1)}, \textup{(RK2)}, and \textup{(A)}.  Then there
exist $h_*>0$, $\Lambda \geq 1$, $\sigma\geq 0$, and $c_\mS \geq 1$
such that 
\begin{subequations}
  \label{e.RKShA.1}
\begin{gather} 
  \norm{(\id - h\a A)^{-1}}{\kY^s \to \kY^s} \leq \Lambda \,,  
  \label{e.RKShA.a} \\
  \norm{\mS(hA)}{\kY^s \to \kY^s} \leq 1 + \sigma \, h \leq c_\mS \,,
  \label{e.RKShA.c} \\
  \norm{h \a A (\id - h \a A)^{-1}}{\kY^s \to \kY^s} \leq 1 + \Lambda \,,
  \label{e.RKShA.b} 
  \end{gather}
\end{subequations}
for all $h \in [0,h_*]$.  Moreover, for any $\ell, n, \in \N_0$,
\begin{subequations}
\begin{gather}
  (W, h) \mapsto (\id - h \a A)^{-1} W
  \text{ is a map of class }
  \kCb^{(\underline{n},\ell)}(\kB_1^{ \kY^s_\ell }(0)\times[0,h_*]; \kY^s)
  \,, \label{e.RKShA.A} \\
  (W, h) \mapsto h \a A (\id - h \a A)^{-1} W
  \text{ is a map of class }
  \kCb^{(\underline{n},\ell)}(\kB_1^{ \kY^s_\ell }(0)\times[0,h_*]; \kY^s)
  \,, \label{e.RKShA.B} \\
  (W,h) \to h (\id-h\a A)^{-1}W
  \text{ is a map of class }
  \kCb^{(\underline{n},\ell+1)}(\kB_1^{ \kY^s_\ell }(0)\times [0,h_*];\kY^s)
  \,, \label{e.RKShA.B2} \\
\intertext{and}
  (U, h) \mapsto \mS(hA) U
  \text{ is a map of class }
  \kCb^{(\underline{n},\ell)}(\kB_1^{ \kY_\ell }(0)\times[0,h_*]; \kY) \,.
  \label{e.RKShA.C}
\end{gather}
  \label{e.RKShA.2}
\end{subequations}
\end{lemma}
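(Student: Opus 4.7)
The plan rests on the fact that $A$ is normal (by (A)), so the functional calculus of normal operators converts every bound in the lemma into a pointwise estimate for a matrix-valued rational function of $z = h\lambda$ with $\lambda \in \spec A$. By (RK2), the polynomial $\det(\id - z\a)$ has all its at most $s$ zeros in $\{z \in \C \colon \Re z > 0\}$; hence there is $d_0 > 0$ such that $(\id - z\a)^{-1}$ is analytic and uniformly bounded by some $\Lambda \geq 1$ on the half-plane $\kR = \{z \in \C \colon \Re z \leq d_0\}$ (boundedness at infinity follows because the rational function vanishes there). Combined with \eqref{e.etAEstimate}, which gives $\Re \spec A \leq \omega$, choosing $h_*>0$ with $h_*\omega \leq d_0$ places $h\,\spec A$ inside $\kR$, yielding \eqref{e.RKShA.a}; the algebraic identity $h\a A(\id - h\a A)^{-1} = (\id - h\a A)^{-1} - \id$ then gives \eqref{e.RKShA.b}.

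For \eqref{e.RKShA.c}, I would use that $\mS$ is a rational function with no poles in $\kR$ (same argument as for the resolvent) and $|\mS(iy)| \leq 1$ for all $y \in \R$ by (RK1). A first-order Taylor expansion of $\mS$ in $\Re z$ at points of $i\R$, together with a uniform bound on $\mS'$ over $\kR$, sharpens mere boundedness to $|\mS(z)| \leq 1 + \sigma\, \Re z$ for $0 \leq \Re z \leq d_0$, which lifts via the spectral calculus to the required inequality.

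For the regularity claims \eqref{e.RKShA.A}--\eqref{e.RKShA.C}, the dependence on $W$ is linear, so derivatives of all orders $n$ in $W$ are immediate; the $h$-derivatives would be obtained from the scalar identities
\[
  \partial_h^k (1-hz)^{-1} = k!\, z^k\, (1-hz)^{-k-1},
  \qquad
  \partial_h^k \bigl[h (1-hz)^{-1}\bigr] = k!\, z^{k-1}\, (1-hz)^{-k-1},
\]
the second of which follows from the first by a Leibniz telescoping. Lifted through the functional calculus, the first identity reveals that $\ell$ derivatives of $(\id - h\a A)^{-1}$ bring out exactly $(\a A)^\ell$ multiplied by a resolvent tail that remains bounded on $\kR$; the loss is exactly $\ell$ rungs of the scale, giving \eqref{e.RKShA.A}. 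The resolvent identity above transfers this to \eqref{e.RKShA.B}. The second scalar identity is the key structural observation for \eqref{e.RKShA.B2}: $\ell+1$ $h$-derivatives of $h(\id - h\a A)^{-1}$ bring out only $(\a A)^\ell$, so one extra $h$-derivative is obtained for free. Finally, \eqref{e.RKShA.C} follows from the representation $\mS(hA) = \id + h\,\b^T A(\id - h\a A)^{-1}\1$ and the same mechanism.

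The principal technical obstacle is the sharpened bound $|\mS(z)| \leq 1 + \sigma\,\Re z$ rather than a mere boundedness estimate: without this refinement, the iterated stability $(1 + \sigma h)^n \leq \e^{\sigma T}$ over many Runge--Kutta steps would be lost. Every other estimate is a routine consequence of the scalar identities above and the normal-operator functional calculus of $A$.
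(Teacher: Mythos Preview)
The paper does not prove this lemma; immediately before the statement it writes ``Proofs can be found in \cite[Section~3.2]{OW2010}.'' So there is no in-paper argument to compare against, and your approach---reducing everything to pointwise estimates of the $s\times s$ matrix-valued rational functions via the spectral calculus of the normal operator $A$---is the standard route and almost certainly the one taken in the cited companion paper. Your scalar identities for $\partial_h^k(1-hz)^{-1}$ and $\partial_h^k[h(1-hz)^{-1}]$ are correct, and your observation that the latter produces only $z^{k-1}$ (one fewer power of $A$ than derivatives in $h$) is exactly the mechanism behind \eqref{e.RKShA.B2}. The Taylor-from-$i\R$ argument for \eqref{e.RKShA.c} is also the right idea.

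There is, however, one genuine gap. Your claim that $(\id - z\a)^{-1}$ is bounded at infinity ``because the rational function vanishes there'' is correct only when $\a$ is invertible: then $\det(\id - z\a)$ has degree $s$ while each cofactor has degree at most $s-1$. If $\a$ is singular---a case the paper explicitly allows (cf.\ Remark~\ref{r.discr-improved-t-diff})---the degree of the determinant drops and boundedness can fail. A concrete obstruction: the three-stage tableau with
\[
  \a=\begin{pmatrix}0&1&0\\0&0&0\\0&0&1\end{pmatrix},\qquad \b=(0,0,1)^T
\]
satisfies (RK1) and (RK2) (indeed $\mS(z)=1/(1-z)$), yet the $(1,2)$ entry of $(\id - z\a)^{-1}$ equals $z$, so $\lVert(\id - z\a)^{-1}\rVert$ is unbounded on $\C^-$, and for any $A$ with unbounded imaginary spectrum the operator $(\id - h\a A)^{-1}$ is unbounded on $\kY^s$. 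Thus (RK2) alone does not yield \eqref{e.RKShA.a}; one needs either invertibility of $\a$, an explicit boundedness hypothesis on $(\id - z\a)^{-1}$ over $\C^-$ (sometimes called ASI-stability), or a restriction to irreducible methods. You should check which route \cite{OW2010} actually takes and align your hypothesis accordingly.
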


%%%%%%%%%%%%%%%%%%%%%%%%%%%%%%%%%%%%%%%%%%%%%%%%%%%%%%%%%%%%%%%

\subsection{Regularity of Galerkin truncated time-discretization}

Let $W_m(U^0,h)$ denote the stage vector, with $W_m^j(U^0,h)$ for $j=1,
\dots, s$ its components, and $\Psi_m^h(U^0,h)$ denote the numerical
time-$h$ map obtained by applying an $s$-stage Runge-Kutta method to
the projected semilinear evolution equation \eqref{e.pde-m} with
initial value $u_m(0) = \P_m U^0$.  Their regularity, with uniform
bounds in the spatial resolution $m$, is stated in the following
theorems which, again in local and uniform version, provide the
analogue to what is known for the time-$h$ map $\Psi$ in the
time semi-discrete case \cite[Theorems~3.14, 3.15, and
Remark~3.17]{OW2010}.
 
\begin{theorem}[Regularity of Galerkin truncated numerical method,
local version]
\label{t.h-diff} 
Assume that the semilinear evolution equation \eqref{e.pde} satisfies
conditions \textup{(A)} and \textup{(B1)}.  Apply a Runge--Kutta
method $\Psi$ subject to conditions \textup{(RK1)} and \textup{(RK2)}
to it.  Choose $R\in (0,\delta_*]$ such that
 $\kD^{-R}_K \neq \emptyset$ and pick $U^0 \in \kD^{-R}_K$.  Let $R_*=R/(2\max\{c_\mS,\Lambda\})$ with
$\Lambda$ and $c_\mS$ from \eqref{e.RKShA.1}.  Then there exists $m_*
= m_*(R, U^0)$ and $h_*=h_*(R,U^0)>0$, such that for $m\geq m_*$ there
exists a stage vector $W_m$ whose components $W_m^i$ as well as the
numerical time-$h$ map $\Psi^h_m(U,h)=\Psi^h_m(U)$ are of class
\eqref{e.T-diff-gen} with $T_*$ replaced by $h_*$.  The bounds on
$W_m$, $\Psi_m$ and $h_*$ are independent of $m$ and depend only on
the bounds afforded by \textup{(B1)} and \eqref{e.RKShA.1}, on the
coefficients of the method, $R$, and $U^0$.
\end{theorem}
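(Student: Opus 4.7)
The plan is to mirror the proof of Theorem~\ref{t.local-diff}, with the Duhamel fixed-point map \eqref{e.PiFlow} replaced by the Galerkin-truncated Runge--Kutta stage equation
\[
  \Pi_m(W;U,h) \equiv (\id - h\a A)^{-1}\bigl(\1\,\P_m U + h\a\,\P_m B(W)\bigr)
\]
derived from \eqref{eq:newRKStagesIter}. I apply Theorem~\ref{t.cm-stab}(a) with parameter $\mu = h$ in $\kI = (0,h_*)$, $\kX = \kY_K$, $\kU = \kB_{R_*}^{\kY_K}(U^0)$, $\kZ_j = \kY_j^s$, and $\kW_j = \kB_R^{\kY_j^s}(\1 U^0)$ for $j = 0,\ldots,K$. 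The requisite joint regularity of $\Pi_m$ on these scales is supplied by \textup{(B1)} combined with Lemma~\ref{l.RKShA}\,\eqref{e.RKShA.A}--\eqref{e.RKShA.B2}.

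To verify condition~(i) of Theorem~\ref{t.cm-stab}, I use the identity $h\a A(\id-h\a A)^{-1} = (\id - h\a A)^{-1} - \id$ to write
\[
\Pi_m(W;U,h) - \1 U^0 = (\id - h\a A)^{-1}\1(\P_m U - U^0) + \bigl((\id - h\a A)^{-1}-\id\bigr)\1 U^0 + h\,(\id-h\a A)^{-1}\a\,\P_m B(W).
\]
The first summand is bounded in $\kY_j^s$ by $\Lambda R_* + \Lambda\,\norm{\Q_m U^0}{\kY_j}$; since $R_* \leq R/(2\Lambda)$ and $\norm{\Q_m U^0}{\kY_j} \to 0$ as $m\to\infty$ by \eqref{e.qm-estimate}, this contributes at most, say, $R/2 + R/8$ for all $m \geq m_*(R,U^0)$. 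The middle summand tends to $0$ in $\kY_j^s$ as $h\to 0$ for each fixed $U^0 \in \kY_K$ by dominated convergence in the spectral calculus of $A$, since \textup{(RK1)}--\textup{(RK2)} (together with Lemma~\ref{l.RKShA}) guarantee that the symbols $(1-h\a_{ij}\lambda)^{-1}-1$ are uniformly bounded on $\spec(A)$ for $h\in[0,h_*]$. The last summand is $O(h)$ by \eqref{e.RKShA.a} and \textup{(B1)}. Shrinking $h_* = h_*(R,U^0)$, the combined contribution stays $\leq R$, and $\Pi_m$ maps each $\kW_j$ into itself uniformly in $m \geq m_*$, $U \in \kU$, and $h \in \kI$. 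Condition~(ii) follows from $\D_W\Pi_m(W;U,h) = h\,(\id-h\a A)^{-1}\a\,\P_m\D B(W)$, whose $\kE(\kY_j^s)$-norm is at most $h\,C\,M_j'$ with $C$ depending only on $\Lambda$ and the coefficients of the method; a further shrinking of $h_*$ makes this a strict contraction, uniformly in $m$, $U$, and $h$.

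Theorem~\ref{t.cm-stab}(a) then produces a stage vector $w_m$ of class \eqref{e.T-diff-gen} with $T_*$ replaced by $h_*$, with bounds uniform in $m \geq m_*$. The numerical update $\psi_m^h$ is then read off from \eqref{eq:NewRKUpdate}, namely $\psi_m^h(U) = \mS(hA)\P_m U + h\b^T(\id - h\a A)^{-1}\P_m B(w_m(U,h))$. The first summand is of the required class by Lemma~\ref{l.RKShA}\,\eqref{e.RKShA.C}; the bound $\tnorm{\mS(hA)}{\kY\to\kY}\leq c_\mS$ together with $R_*\leq R/(2c_\mS)$ and the strong continuity of $\mS(hA)$ at $h=0$ (argued as for the middle term above) ensure that $\psi_m^h$ maps $\kU$ into $\kB_R^{\kY_j}(U^0)$ for all $j\leq K$. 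The second summand is a smooth composition of $w_m$ with the resolvent and $B$, to which the chain rule together with Lemma~\ref{l.RKShA}\,\eqref{e.RKShA.A}--\eqref{e.RKShA.B2} apply, yielding the same class for $\psi_m^h$.

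The main obstacle is the $h$-dependence of the rational functions of $A$ that enter $\Pi_m$ and $\Psi$: because $A$ is unbounded and $U^0$ is not assumed to lie in $\kY_{K+1}$, one cannot Taylor-expand in $h$, and instead one has to combine strong continuity at $h=0$ with the scale-loss estimates of Lemma~\ref{l.RKShA} to trade smoothness in $h$ for rungs on the scale $\kY_0,\ldots,\kY_K$. Apart from this, the remaining estimates are the same scale-wise bookkeeping as in the proof of Theorem~\ref{t.local-diff}.
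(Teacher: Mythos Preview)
Your proof is correct and follows essentially the same route as the paper's: you apply Theorem~\ref{t.cm-stab}(a) to the truncated stage map $\Pi_m(W;U,h)=\P_m\Pi(W;\P_mU,h)$ on the scale $\kZ_j=\kY_j^s$, $\kW_j=\kB_R^{\kY_j^s}(\1 U^0)$, verify the self-mapping via the same three-term splitting of $\Pi_m(W;U,h)-\1 U^0$ (your identity $h\a A(\id-h\a A)^{-1}=(\id-h\a A)^{-1}-\id$ is exactly how the paper produces the term $\lVert(\id-(\id-h\a A)^{-1})\1 U^0\rVert_{\kY_j^s}$), obtain the contraction bound $\lVert\D_W\Pi_m\rVert\le h\Lambda\lVert\a\rVert M_j'$, and then read off the regularity of $\psi_m$ from \eqref{eq:NewRKUpdate} via the chain rule and Lemma~\ref{l.RKShA}. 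Two small remarks: the ``symbol'' of $(\id-h\a A)^{-1}$ is the matrix $(I_s-h\lambda\a)^{-1}$, not the entrywise $(1-h\a_{ij}\lambda)^{-1}$, and the convergence $\lVert\Q_mU^0\rVert_{\kY_K}\to 0$ at the top rung follows from strong convergence of $\P_m$ rather than from \eqref{e.qm-estimate}; neither affects the argument.
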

 
\begin{proof}
As in the proof of Theorem~\ref{t.local-diff} on the regularity of the
semiflow $\Phi_m$, we apply Theorem~\ref{t.cm-stab} (a) on contraction
mappings on a scale of Banach spaces.  Here we set 
\[
 w = W, \kZ_j =\kY_j^s, ~~\mbox{and}~~ \kW_j = \kB^{\kY_j^s}_{R }(\1 U^0) ~~\mbox{for}~~
 j=0, \dots,K. 
\]
 We further identify $\mu=h$, $\kI = (0, h_*)$, and $\kX=\kY_K$
with $\kU \equiv \kU_K = \kB^{\kY_K}_{R_*}(U^0)$.

The map $\Pi$ for the stage vector $W$ is defined by
\eqref{eq:newRKStagesIter}; we write the corresponding map
$\Pi_m(W,U,h)= \P_m \Pi(W,\P_m U, h)$, analogous to \eqref{e.Pim} for
the semiflow.  We now show that the differentiability assumption on
$B$ is such that $\Pi_m$ satisfies the conditions of Theorem
\ref{t.cm-stab} for $m\geq m_*$ with a suitable choice of $m_*$.

First, we show that $\Pi_m$ maps each $\kW_0, \dots, \kW_K$ into
itself uniformly for $h\in(0,h_*)$ and $U \in \kU$.  By
Lemma~\ref{l.RKShA} we estimate, for $W \in \kW_j$,
\begin{align}
  \norm{\Pi_m(W;U,h) - \1 U^0}{\kY_j^s}   
 &  \leq \norm{(\id - (\id - h {\a} {A} )^{-1})\1   U^0}{\kY_j^s}
  + \norm{  (\id - h {\a} {A} )^{-1}\1 \Q_m  U^0}{\kY_j^s} \notag \\
 & \quad + \Lambda \, R_* +  h \, \Lambda \, \norm{\a}{} \, M_j \,.
  \label{e.helpRK}
\end{align}
Using Lemma~\ref{l.RKShA}, we can find $h_*(R, U^0)$ and $m_*(R, U^0)$
such that for $m\geq m_*$ and $h \in (0,h_*)$ the first line on the
right-hand side of \eqref{e.helpRK} is less than $R/4$.  By possibly shrinking
$h_*$ further, the second line is less than $3R/4$, so that $\Pi_m (\,
\cdot \,; U, h)$ maps $\kW_j$ into itself.  Second, assumptions (B1)
and Lemma~\ref{l.RKShA} ensure that $\Pi_m$ satisfies condition (i) of
Theorem \ref{t.cm-stab} for $m\geq m_*$.  The contraction estimate,
condition (ii) of Theorem \ref{t.cm-stab}, follows from
\begin{align}
  \norm{\D_W \Pi_m(W;U,h)}{\kY_j^s \to \kY_j^s} 
  & \leq h \, \norm{(\id - h {\a} {A})^{-1}\a}{\kY_j^s \to \kY_j^s} \,
         \norm{\D B_m(W)}{\kY_j^s \to \kY_j^s} 
    \notag \\
  & \leq h \, \Lambda \, \norm{\a}{} \, M'_j
    \label{e.contraction}
\end{align}
for $j=0,\ldots, K$.  Thus, by possibly shrinking $h_*$ again, the
right-hand side bound can be made less than $1$, and condition (ii) is met
for $m\geq m_*$.  Thus Theorem \ref{t.cm-stab} (a) implies that
$W_m^j$ is of class \eqref{e.T-diff-gen} for $j=1,\ldots, s$.  The
same holds true for $\Psi_m$ due to the chain rule on scales of Banach
spaces, Lemma~\ref{l.diff} (a), applied to \eqref{eq:NewRKUpdate}
using Lemma~\ref{l.RKShA}.
\end{proof}

As for the semiflow $\Phi_m$, there is also a uniform version of this
result.

\begin{theorem}[Regularity of Galerkin truncated time-discretization,
uniform version]
\label{t.h-diff-unif} 
Assume \textup{(A)} and \textup{(B1)}, as well as \textup{(RK1)} and
\textup{(RK2)}.  Pick $\delta \in (0,\delta_*]$ such that
$\kD^{-\delta}_{K+1}$ is nonempty.  Then there is $h_* =
h_*(\delta)>0$ and $m_* = m_*(\delta)$ such that for $m \geq m_*$ the
statements of Theorem \ref{t.h-diff} hold true with $R=\delta$ and
bounds which are uniform in $U^0 \in \kD^{-\delta}_{K+1}$ and $m\geq
m_*$.  Moreover, for $m\geq m_*$, the components $W_m^j$ of the stage
vector $W_m(U,h)$ are of class \eqref{e.T-diff-gen-unif} with $T_*$
replaced by $h_*$, and, if the Runge--Kutta matrix $\a$ is invertible,
the numerical time-$h$ map $\Psi_m$ is also of class
\eqref{e.T-diff-gen-unif} with $T_*$ replaced by $h_*$.  The bounds on
$W_m$, $\Psi_m$ and $h_*$ are independent of $m\geq m_*$ and only
depend on the bounds afforded by \textup{(B1)}, \eqref{e.Rk}, and
\eqref{e.RKShA.1}, on the coefficients of the method, and on $\delta$.
\end{theorem}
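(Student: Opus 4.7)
The plan is to mirror the three-stage strategy used in Theorem~\ref{t.local-diff-unif} for the semiflow, adapted to the Runge--Kutta contraction map $\Pi_m$ from \eqref{e.Pim}. The three stages will be: (i) show that the hypotheses of Theorem~\ref{t.cm-stab} hold uniformly in $U^0$ on the scale $\kY_0, \dots, \kY_K$, so that Theorem~\ref{t.h-diff} upgrades to a uniform statement; (ii) derive an explicit algebraic identity for $A w_m$ and use it to lift $w_m^j$ to class \eqref{e.T-diff-gen-unif}; and (iii) exploit invertibility of $\a$ to handle $\psi_m$.

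For (i), I would revisit the proof of Theorem~\ref{t.h-diff} with $R = \delta$ and replace the bound on the first line of \eqref{e.helpRK} by the uniform estimate
\[
  \norm{(\id - (\id - h \a A)^{-1}) \1 U^0}{\kY_j^s}
  + \norm{(\id - h \a A)^{-1} \1 \Q_m U^0}{\kY_j^s}
  \leq \Lambda \, \norm{\a}{} \, R_{K+1} \, h
  + \Lambda \, R_{K+1} \, m^{-(K+1-j)}
\]
for $j = 0, \dots, K$. This follows from the identity $\id - (\id - h \a A)^{-1} = - h \a A (\id - h \a A)^{-1}$ combined with $\norm{A}{\kY_{j+1} \to \kY_j} \leq 1$ from \eqref{e.normA}, the Galerkin estimate \eqref{e.qm-estimate}, and Lemma~\ref{l.RKShA}. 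Choosing $h_*$ sufficiently small and $m_*$ sufficiently large, depending only on $\delta$ via $R_{K+1}$, makes the right-hand side less than $R/4$, while the contraction bound \eqref{e.contraction} is already $U^0$-independent. Theorem~\ref{t.cm-stab} (a) together with the chain rule Lemma~\ref{l.diff} applied to \eqref{eq:NewRKUpdate} then yields $w_m^j$ and $\psi_m$ of class \eqref{e.T-diff-gen} with bounds uniform in $U^0 \in \kD_{K+1}^{-\delta}$ and $m \geq m_*$.

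For (ii), the key observation is that applying $A$ to the stage equation \eqref{eq:newRKStagesIter} and using the commutation of $A$ with $(\id - h \a A)^{-1}$ together with $(\id - h \a A)^{-1} - \id = h \a A (\id - h \a A)^{-1}$ yields the explicit representation
\[
  A w_m
  = (\id - h \a A)^{-1} A \1 \P_m U
  + h \a A (\id - h \a A)^{-1} B_m(w_m) \,,
\]
without introducing any new fixed-point loop. For $U \in \kD_{K+1}^{-\delta}$ the first summand is uniformly of class $\kCb^{(\underline{n}, \ell)}$ into $\kY_K^s$ by \eqref{e.RKShA.A}, while the second is uniformly of class $\kCb^{(\underline{n}, \ell)}$ into $\kY_K^s$ by combining \eqref{e.RKShA.B}, the chain rule Lemma~\ref{l.diff}, and the regularity of $w_m$ from stage (i). This places $A w_m$ in a class analogous to \eqref{e.Aphim-unif}; feeding $w_m$ and $A w_m$ into Lemma~\ref{l.technical1}, as in the proof of Theorem~\ref{t.local-diff-unif}, promotes $w_m^j$ to class \eqref{e.T-diff-gen-unif}.

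For (iii), invertibility of $\a$ lets us solve the stage equation \eqref{eq:RKStagesIter} for $A w_m + B_m(w_m) = (h \a)^{-1} (w_m - \1 \P_m U)$, so that the update \eqref{eq:RKUpdate} collapses to
\[
  \psi_m^h(U)
  = (1 - \b^T \a^{-1} \1) \, \P_m U
  + \b^T \a^{-1} w_m \,,
\]
a linear combination of $\P_m U$ (uniformly in $\kY_{K+1}$ for $U \in \kD_{K+1}^{-\delta}$) and the components of $w_m$ (of class \eqref{e.T-diff-gen-unif} by stage (ii)); the claim for $\psi_m$ follows at once. The main obstacle is stage (ii): one cannot simply iterate $\Pi_m$ on the scale $\kY_{K+1}^s$ because $h \a B_m(W)$ only lies in $\kY_K^s$, so the argument hinges on the algebraic rearrangement that expresses $A w_m$ directly in terms of already-regular quantities, playing the role that integration by parts plays in \eqref{e.tildePim} for the semiflow.
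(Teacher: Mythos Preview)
Your overall three-stage strategy matches the paper's, and stages (i) and (iii) are correct.  In fact, your stage (iii) identity
\[
  \psi_m^h(U) = (1 - \b^T \a^{-1} \1) \, \P_m U + \b^T \a^{-1} w_m
\]
is a cleaner route than the paper's, which instead combines \eqref{eq:NewRKUpdate} with the boundedness of $h\a A(\id-h\a A)^{-1}$ from \eqref{e.RKShA.B} and the chain rule.

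There is, however, a genuine gap in stage (ii).  You propose to feed $w_m$ and $A w_m$ into Lemma~\ref{l.technical1} ``as in the proof of Theorem~\ref{t.local-diff-unif}.''  But Lemma~\ref{l.technical1} has three hypotheses: $w\in\kC_{N,K+1,1}$, $w\in\kC_{N,0,0}$, and $\partial_\mu w\in\kC_{N-1,K,0}$.  Your $A w_m$ formula supplies the first, and stage (i) the second, but you have not established the third.  In Theorem~\ref{t.local-diff-unif} that step was free because the evolution equation gives $\partial_t\phi_m = A\phi_m + B_m\circ\phi_m$, so $\partial_t\phi_m$ inherits the class of $A\phi_m$ immediately.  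For the Runge--Kutta stage vector no such one-line identity is available: differentiating the fixed-point equation in $h$ produces the term $\a A(\id-h\a A)^{-1}w_m$, which you can control via your $A w_m$ formula, but this still has to be argued.

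The paper closes this gap differently: it re-runs the contraction argument for $\Pi_m$ on an artificial $(K{+}1)$-scale with $\kZ_{K+1}=\kY_K$, $\kW_{K+1}=\kD_K^s$, exploiting \eqref{e.RKShA.B2} (the extra $h$-regularity of $h(\id-h\a A)^{-1}$) to verify conditions (i)--(ii) of Theorem~\ref{t.cm-stab} on this extended scale; this yields $\partial_h w_m^j$ of class \eqref{e.Aphim-unif} directly.  Either this device or an explicit computation of $\partial_h w_m$ via $(\id-\D_W\Pi_m)^{-1}\partial_h\Pi_m$ and your $A w_m$ identity would repair your argument.
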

 
\begin{proof}
Let $\kZ_j=\kY_j$, $\kW_j = \kB_R^{\kY_k^s}(\1 U^0)$, and $\kU =
\kB_{R_*}^{\kY_K}(U^0)$ for $j = 0, \dots, K$ as in the proof of
Theorem~\ref{t.h-diff}, taking $R=\delta$ and $R_*$ as in
Theorem~\ref{t.h-diff}.  First, due to (B1), the map $\Pi_m$ is
well-defined from $\kW_j\times\kU\times\kI$ into $\kZ_j$ with the
required regularity properties and with bounds that are uniform in $m$
and $U^0 \in \kD_{K+1}^{-\delta}$.  To show that $\Pi_m$ maps $\kW_j$
back into $\kW_j$ for $m\geq m_*(\delta)$ with a suitable choice of
$m_*$, note that, for $j=0,\ldots, K$,
\begin{align}
  & \norm{(\id - (\id - h {\a} {A} )^{-1} \1 )  U^0}{\kY^s_j}
  + \norm{(\id - h {\a} {A} )^{-1} \1  \Q_m U^0}{\kY^s_j}
  \notag \\ 
  & \qquad 
    \leq h \, \max_{s \in [0,h]} 
           \norm{\a A (\id - s {\a} {A} )^{-2} \1 U^0}{\kY^s_j}
         + \Lambda \, \norm{\1\Q_m U^0}{\kY^s_j}
  \notag \\
  & \qquad 
    \leq (h \, \Lambda^2 \, \norm{\a}{} + \Lambda/m)  \, 
         \sup_{U \in \kD_{K+1}^{-\delta}} \norm{U}{\kY_j} 
  \notag \\
  & \qquad 
    \leq (h \, \Lambda^2 \, \norm{\a}{} + \Lambda/m) \, R_{K+1} \,, 
  \label{e.uniformh}
\end{align}
where $R_{K+1}$ is defined by \eqref{e.Rk}.  Inserting this estimate
into \eqref{e.helpRK}, we see that we can choose $h_*(\delta)>0$ small
enough and $m_*(\delta)$ big enough such that $\Pi_m( \, \cdot \,;
U,h)$ maps each $\kW_j$ into itself and, due to \eqref{e.contraction},
such that $\Pi_m$ is a contraction on each $\kW_j$ uniformly for $U^0,
U \in \kD_{K+1}^{-\delta}$, and $h \in [0,h_*]$.  So the conditions of
Theorem~\ref{t.cm-stab} are satisfied uniformly for $m\geq m_*$, $h\in
(0,h_*)$, and $U^0 \in \kD_{K+1}^{-\delta}$.

Applying $A$ to $\Pi_m(W,U,h)$ yields
\[
  AW_m = A \Pi_m(W_m,\P_m U,h)= (\id - h  {\a} {A} )^{-1} \1 A \P_m U + 
  h \a A  (\1 U - h  {\a} A)^{-1} \P_m {B}(W_m) \,.
\]
Using Lemma~\ref{l.diff} (a), the chain rule on a scale of Banach
spaces, together with the estimates of Lemma~\ref{l.RKShA}, we find
that $A W_m^j$ is of class \eqref{e.Aphim-unif} for $j=1,\ldots, s$.
Moreover, on the $(K+1)$-scale $\{\kZ_j\}$, given by
 $\kZ_j=\kY_j$ for $j=0,\ldots, K$ and
$\kZ_{K+1}=\kY_K$, setting  $\kW_j =\kD_j^s$ for $j=0,\ldots K$,
$\kW_{K+1}=\kD_K^s$,    $\kU=\kD_{K+1}^{-\delta}$,
$\kX=\kY_{K+1}$ and $\kI=(0,h_*)$, the map $\Pi_m$ satisfies
conditions (i) and (ii) of Theorem~\ref{t.cm-stab}.  (Here we have
used Lemma~\ref{l.RKShA}, in particular \eqref{e.RKShA.B2}, once
again.)  Therefore, $\partial_h W_m^j$ is of class
\eqref{e.Aphim-unif}.  Then Lemma~\ref{l.technical1} implies that
$W_m^j$ is of class \eqref{e.T-diff-gen-unif}.  If $\a$ is invertible,
we can use \eqref{eq:NewRKUpdate}, \eqref{e.RKShA.B} and the chain
rule in the form of Lemma \ref{l.diff} (a) to show that $\Psi_m$ is
also of class \eqref{e.T-diff-gen-unif}.
\end{proof}

\begin{remark} \label{r.discr-improved-t-diff}
When $\a$ is not invertible, an appropriate modification of the proof of
Theorem~\ref{t.h-diff-unif} yields the weaker statement
\begin{equation}
  \label{e.Psi-improved-t-diff-local}
    \Psi_m \in \bigcap_{\substack{  j+k\leq N-1 \\ k \leq K }}
  \kCb^{(\underline j, k+1)} (\kD_{K+1}^{-\delta} \times [0,T_*]; 
  \kD)
\end{equation} 
for $m\geq m_*$ with bounds that depend only on the bounds afforded by
\textup{(B1)}, \eqref{e.Rk}, and \eqref{e.RKShA.1}, on the
coefficients of the method, and on $\delta$.  An analogous statement
holds true for $\Psi$ \cite[Remark~3.22]{OW2010}.
\end{remark}

%%%%%%%%%%%%%%%%%%%%%%%%%%%%%%%%%%%%%%%%%%%%%%%%%%%%%%%%%%%%%%

\subsection{Convergence of Galerkin truncated time discretization}

Next, we prove a convergence result for the time-semidiscretization
of the projected system.  The error bounds are uniform in the spatial
truncation parameter $m$.
 
\begin{theorem}[Convergence of time discretization of projected system]  
\label{c.RKConv-m}
Assume that the semilinear evolution equation \eqref{e.pde} satisfies
condition \textup{(A)}, and apply a Runge--Kutta method of classical
order $p$ subject to conditions \textup{(RK1)} and \textup{(RK2)} to
it.  Assume further that \textup{(B1)} holds with $K \geq p$.  Pick
$\delta \in (0,\delta_*]$ such that $\kD_{p+1}^{-\delta}$ is
non-empty, and fix $T>0$.  Then there exist positive constants $h_*$,
$m_*$, $c_1$, and $c_2$ that only depend on the bounds afforded by
\textup{(B1)}, \eqref{e.RKShA.1}, on the coefficients of the method,
and on $\delta$, such that for every $U^0$ satisfying
\begin{equation}
  \label{e.unifConvAss}
  \{ \Phi^t(U^0) \colon t \in [0,T] \} \subset \kD^{-\delta}_{p+1} \,,
\end{equation}
$h \in [0,h_*]$, and $m \geq m_*$, the numerical solution
$(\Psi_m^h)^n (U^0)$ lies in $\kD$ and satisfies
\begin{equation}
  \label{e.m-unifConv}
  \norm{(\Psi_m^h)^n(U^0) - \Phi_m^{nh}(U^0)}{\kY} 
  \leq c_2 \, \e^{c_1 nh} \, h^p
\end{equation}
so long as $nh \leq T$.  
\end{theorem}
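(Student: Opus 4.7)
The plan is to carry out a Lady Windermere's fan argument, organized so that every constant that appears is uniform in the spatial resolution $m$. I will use three ingredients: the higher-order Galerkin convergence of Corollary~\ref{t.wpp.unif} to confine $\phi_m^t(U^0)$ to a neighborhood of the exact orbit; the uniform-in-$m$ regularity of $\phi_m$ and $\psi_m^h$ provided by Theorems~\ref{t.local-diff-unif} and~\ref{t.h-diff-unif}; and a classical consistency estimate derived from the order conditions of the Runge--Kutta method.

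First, invoking Corollary~\ref{t.wpp.unif} together with assumption \eqref{e.unifConvAss}, I choose $m_1$ so large that for every $m \geq m_1$ the orbit $\{\phi_m^t(U^0) : t \in [0,T]\}$ lies in $\kD_{p+1}^{-\delta/2}$ with $\kY_{p+1}$-norm bounded uniformly in $m$. Theorem~\ref{t.local-diff-unif} applied with $K = p$ then yields that $\phi_m$ is jointly $\kC^{p+1}$ in $(U,t)$ on $\kD_{p+1}^{-\delta/2}\times[0,T_*]$ with values in $\kY$ and with bounds uniform in $m$; in particular $\partial_t^{p+1}\phi_m$ is bounded in $\kY$ uniformly in $m$. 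Theorem~\ref{t.h-diff-unif} (applied with $K=p$) furnishes $\psi_m^h$ with uniform bounds on $\kD_{p+1}^{-\delta/2}$; from the fixed-point equation \eqref{eq:newRKStagesIter} and the estimates \eqref{e.RKShA.1}, a direct calculation gives a stability estimate of the form
\[
  \norm{\psi_m^h(U) - \psi_m^h(V)}{\kY} \leq (1 + c_1 h) \, \norm{U - V}{\kY}
\]
on a $\kY$-neighborhood of the exact orbit, with $c_1$ independent of $m$.

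Next, I establish the local truncation error bound
\[
  \norm{\psi_m^h(V) - \phi_m^h(V)}{\kY} \leq C \, h^{p+1},
\]
uniformly for $V \in \kD_{p+1}^{-\delta/2}$ and $m \geq m_1$. This is the classical statement that a Runge--Kutta method of order $p$ is consistent to order $p+1$: Taylor expand $h \mapsto \phi_m^h(V)$ using the time-derivatives supplied by Theorem~\ref{t.local-diff-unif}, Taylor expand $h \mapsto \psi_m^h(V)$ from \eqref{eq:NewRKUpdate} and \eqref{e.SzRK} via the elementary-differential calculus associated with the tableau $(\a,\b)$, and use the classical order conditions to cancel all terms of order up to $h^p$. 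The remainders are bounded purely in terms of $\kY$-norms of derivatives of $\phi_m$ and of the stage vector $w_m$, which by the preceding step are $m$-uniform. Then the standard telescoping identity
\[
  (\psi_m^h)^n(U^0) - \phi_m^{nh}(U^0)
  = \sum_{k=0}^{n-1} \bigl[(\psi_m^h)^{n-k}(\phi_m^{kh}(U^0))
    - (\psi_m^h)^{n-k-1}(\phi_m^{(k+1)h}(U^0))\bigr],
\]
combined with iterating the stability estimate $n-k-1$ times and inserting the consistency estimate once in each summand, yields $\norm{(\psi_m^h)^n(U^0) - \phi_m^{nh}(U^0)}{\kY} \leq nCh^{p+1}(1+c_1h)^{n-1} \leq c_2 \e^{c_1 nh} h^p$ provided $nh \leq T$.

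To legitimize the use of stability and consistency at every telescoping step, I would run the argument inductively in $n$, shrinking $h_*$ if necessary so that the accumulated error never exceeds $\delta/4$ in the $\kY$-norm; then each iterate $(\psi_m^h)^n(U^0)$ stays within $\delta/4$ of $\phi_m^{nh}(U^0)$ and hence inside $\kD$. The principal obstacle will be the consistency step: since $\P_m A$ has operator norm growing like $m$, any naive derivation of the order-$p$ truncation error would yield constants that blow up with $m$. The point is that Theorem~\ref{t.local-diff-unif} controls $\partial_t^{p+1}\phi_m$ as a bounded map into $\kY$ with $m$-uniform bound, obtained via the contraction-mapping framework on the scale $\kY_0,\dots,\kY_{p+1}$ rather than via direct estimates on $\P_m A$, and it is precisely this uniform regularity that transfers to the consistency constant and hence to the final bound \eqref{e.m-unifConv}.
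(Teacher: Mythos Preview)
Your approach is correct and is essentially the content of the argument the paper relies on, but the organization differs. The paper does not redo the Lady Windermere's fan computation for the projected system; instead it invokes \cite[Theorem~3.24]{OW2010}, which already proves the convergence estimate \eqref{e.unifConv} for any pair $(\Phi,\Psi)$ satisfying \textup{(A)}, \textup{(B1)}, \textup{(RK1)}, \textup{(RK2)} and the orbit condition \eqref{e.unifConvAss}, with constants depending only on the data of those conditions. The entire proof in the paper then reduces to checking that the truncated pair $(\phi_m,\psi_m)$ satisfies an analogous orbit condition uniformly in $m$, after which the cited black-box theorem delivers \eqref{e.m-unifConv}. Your consistency, stability, and telescoping steps are precisely the ingredients of that black-box result, so you are in effect reproving \cite[Theorem~3.24]{OW2010} inline. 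What the modular route buys is brevity; what your route buys is a self-contained argument.

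One technical point deserves care. You assert that for $m$ large the truncated orbit lies in $\kD_{p+1}^{-\delta/2}$, but Corollary~\ref{t.wpp.unif} (even applied with $\kY_p$ as base space) gives only $\kY_p$-convergence together with a uniform $\kY_{p+1}$-bound, not $\kY_{p+1}$-convergence. Since $\kD_{p+1}$ is only assumed to be $\kY_{p+1}$-open, this does not automatically place $\phi_m^t(U^0)$ inside $\kD_{p+1}$. The paper addresses exactly this by \emph{constructing} a replacement domain $\tilde\kD_{p+1}=\kD_p\cap\interior\kB_{C+\delta}^{\kY_{p+1}}(0)$ for which the confinement $\{\phi_m^t(U^0):t\in[0,T]\}\subset\tilde\kD_{p+1}^{-\delta/2}$ does follow from $\kY_p$-closeness plus the $\kY_{p+1}$-bound. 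Your argument goes through once you make this substitution, since Theorems~\ref{t.local-diff-unif} and~\ref{t.h-diff-unif} depend only on the data of \textup{(B1)} and on the $\kY_{p+1}$-radius of the domain, both of which are controlled for $\tilde\kD_{p+1}$.
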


\begin{proof}
Convergence of the time semidiscretization under the above assumptions
can be proved by a standard Gronwall argument, see \cite[Theorem
3.24]{OW2010},  condition (A1) of which is always satisfied in the
setting here (it is stated as \eqref{e.RKShA.c} in
Lemma~\ref{l.RKShA}).  Then \cite[Theorem 3.24]{OW2010} asserts that
whenever a semiflow $\Phi$ satisfies \eqref{e.unifConvAss}, there
exist constants $c_1$ and $c_2$ that only depend on the bounds
afforded by \textup{(B1)} and \eqref{e.RKShA.1}, on the coefficients of
the method, and on $\kD_{p+1}^{-\delta}$ such that
\begin{equation}
  \label{e.unifConv}
  \norm{(\Psi^h)^n(U^0) - \Phi^{nh}(U^0)}{\kY} 
  \leq c_2 \, \e^{c_1 nh} \, h^p
\end{equation}
so long as $nh \leq T$. 

Here, we need to apply this result with $\Phi$ replaced by $\Phi_m$
and $\Psi$ by $\Psi_m$ and show that we obtain uniform bounds in
$m\geq m_*$ and $U^0$ satisfying \eqref{e.unifConvAss}.  So we have to
show that if condition \eqref{e.unifConvAss} holds, then there is an
analogue of this condition for the truncated system which is valid for
all $m\geq m_*$ In other words, we have to find a $\kY_{p+1}$-bounded
set $ \tilde\kD_{p+1} \subset \kD_p \cap \kY_{p+1}$
 and $\tilde\delta>0$ such that
\begin{equation}
  \label{e.orbit-condition}
  \{ \Phi_m^t(U^0) \colon t \in [0,T] \} 
  \subset \tilde\kD_{p+1}^{-\tilde\delta}
\end{equation}
holds for all $U^0$ satisfying \eqref{e.unifConvAss} and all $m\geq
m_*$.  Applying Corollary~\ref{c.wpp.unif} with $\kY_p$ in place of
$\kY$ and $\kD_p$ in place of $\kD$, we find that there is some $m_*
\in \N$ such that
\[
  \sup_{t \in [0,T]} 
  \norm{\Phi^t(U^0) - \Phi^t_m(U^0)}{\kY_{p}} < \delta/2
\]
and 
\[
  \sup_{t \in [0,T]} 
  \norm{\Phi^t_m(U^0)}{\kY_{p+1}} \leq C
\]
for some $C>0$, all $m \geq m_*$, and all $U^0$ satisfying
\eqref{e.unifConvAss}.  Thus, with $\tilde\delta = \delta/2$ and
\[
  \tilde\kD_{p+1} 
  = \kD_p \cap \interior \kB_{C+\delta}^{\kY_{p+1}}(0) \,,
\]   
where $\interior(\kU)$ denotes the interior of a set $\kU$ of a Banach
space $\kX$, condition \eqref{e.orbit-condition} is satisfied for all
$m\geq m_*$.  This completes the proof.
\end{proof}

By combining this theorem with Theorem~\ref{c.wpp.unif} we obtain
convergence of the space-time discretization to the semiflow
$\Phi^t(U^0)$ of order $O(h^p) + O(m^{-K-1})$ for $t\in [0,T]$ and
$m\geq m_*$ with uniform bounds for all $U^0$ satisfying
\eqref{e.unifConvAss}.  In particular, we do not require a coupling
between spatial resolution $m$ and temporal resolution $h$ for this
convergence result.

%%%%%%%%%%%%%%%%%%%%%%%%%%%%%%%%%%%%%%%%%%%%%%%%%%%%%%%%%%%%%%%%%%%%%%%%%%%%%%%
%%%%%%%%%%%%%%%%%%%%%%%%%%%%%%%%%%%%%%%%%%%%%%%%%%%%%%%%%%%%%%%%%%%%%%%%%%%%
%%%%%%%%%%%%%%%%%%%%%%%%%%%%%%%%%%%%%%%%%%%%%%%%%%%%%%%%%%%%%%%%%%%%%%%%%%%%
%%%%%%%%%%%%%%%%%%%%%%%%%%%%%%%%%%%%%%%%%%%%%%%%%%%%%%%%%%%%%%%%%%%%%%%%%%%%
%%%%%%%%%%%%%%%%%%%%%%%%%%%%%%%%%%%%%%%%%%%%%%%%%%%%%%%%%%%%%%%%%%%%%%%%%%%%
%%%%%%%%%%%%%%%%%%%%%%%%%%%%%%%%%%%%%%%%%%%%%%%%%%%%%%%%%%%%%%%%%%%%%%%%%%%%

\subsection{Accuracy of derivatives of Galerkin truncated time
discretization}
\label{ssec:derivTruncate}

Results corresponding to Theorems~\ref{t.h-diff}
and~\ref{t.flow-projection-error-scale} hold true for the stability
under spectral truncation of the numerical stage vector and its
derivatives.

\begin{theorem}[Projection error for the numerical method, local version]
\label{t.projection-error-scale} 
Assume \textup{(A)}, \textup{(B1)}, \textup{(RK1)}, and
\textup{(RK2)}.  Fix $R\in (0,\delta_*]$ such that $\kD_K^{-R}$ is
nonempty and choose $U^0 \in \kD_K^{-R}$.  Let $h_*=h_*(R,U^0)>0$ and
$m_*=m_*(R,U^0)$ be as in Theorem~\ref{t.h-diff}.  Then for every $0
\leq P \leq K$,
\begin{subequations}
  \label{e.m-s}
\begin{gather}
  \norm{W - W_m }{N-1-P,K-P} = O(m^{-P}) 
  \label{e.w-m-s}
\intertext{and}
  \norm{\Psi^h - \Psi^h_m }{N-1-P,K-P} = O(m^{-P}) \,,
  \label{e.psi-m-s}
\end{gather}
\end{subequations}
where the norm in \eqref{e.m-s} is defined with respect to the spaces
\eqref{e.localWSpaces}.  The order constants depend only on the bounds
afforded by \textup{(B1)} and \eqref{e.RKShA.1}, on the coefficients
of the method, and on $R$.
\end{theorem}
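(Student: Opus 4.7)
The plan is to mirror the proof of Theorem~\ref{t.flow-projection-error-scale}, exchanging the contraction map \eqref{e.PiFlow} for the semiflow with the Runge--Kutta contraction map $\Pi$ from \eqref{eq:newRKStagesIter} and its truncated counterpart $\Pi_m(W;U,h) \equiv \P_m \Pi(W; \P_m U, h)$. Assumptions (i) and (ii) of Theorem~\ref{t.cm-stab} were already verified for $\Pi$ and $\Pi_m$ in the proof of Theorem~\ref{t.h-diff}, so by Theorem~\ref{t.cm-stab}~(b) the bound \eqref{e.w-m-s} on $W - w_m$ reduces to
\[
  \Norm{\Pi - \Pi_m}{N-1,K,0,P} = O(m^{-P}).
\]

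To obtain this estimate, I would use that $\P_m$ commutes with $A$, with the resolvent $(\id - h\a A)^{-1}$, and with the componentwise action of $\a$ on the stage space, to split
\[
  (\Pi - \Pi_m)(W;U,h) = G_m(U,h) + I_m(W,h),
\]
with $G_m(U,h) = (\id - h\a A)^{-1}\,\1\,\Q_m U$ and $I_m(W,h) = h\,(\id - h\a A)^{-1}\,\a\,\Q_m B(W)$. For $G_m$, which is independent of $W$, a direct application of Lemma~\ref{l.diff} with the roles of $\Pi$ and $\Sigma$ there played by $\id$ and $\P_m$, using the regularity of the resolvent from \eqref{e.RKShA.A}, yields $\Norm{G_m}{N-1,K,0,P} \leq c_1 \Norm{\Q_m}{N-1,K,0,P} = O(m^{-P})$. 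For $I_m$, the same lemma applied for each $\kappa \in \{P, \ldots, K\}$ with $w(W,h) = v(W,h) = h(\id - h\a A)^{-1}\,\a\, B(W)$, now invoking the sharper \eqref{e.RKShA.B2} to handle the prefactor $h$, yields $\norm{I_m}{N-1-P,\kappa-P} = O(m^{-P})$; Lemma~\ref{l.technical3} (applied with $N$ replaced by $N-1$ and $S=P$) then lifts this to the full family $\Norm{I_m}{N-1,K,0,P} = O(m^{-P})$, completing \eqref{e.w-m-s}. For \eqref{e.psi-m-s}, I would insert the update formula \eqref{eq:NewRKUpdate} to write
\[
  \Psi^h(U) - \psi_m^h(U)
  = \mS(hA)\,\Q_m U + h\,\b^T (\id - h\a A)^{-1}\bigl[\Q_m B(W) + \P_m\bigl(B(W) - B(w_m)\bigr)\bigr],
\]
estimate the first two summands as above (using \eqref{e.RKShA.C} for the stability-function term), and control the third by combining Lemma~\ref{l.diff} with the bound \eqref{e.w-m-s} just established.

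The main obstacle is not conceptual but combinatorial: at each application of Lemma~\ref{l.diff} and Lemma~\ref{l.technical3} one must confirm that the four-index budget $(N,K,L,S)$ matches and that the loss of $P$ rungs on the scale is compensated by the factor $m^{-P}$ afforded by \eqref{e.qm-estimate}. In particular, the additional factor $h$ multiplying the nonlinearity in $I_m$ and in the update step forces the systematic use of \eqref{e.RKShA.B2}, which gains one extra rung of scale regularity in $h$; this is the ingredient specific to the Runge--Kutta case that plays the role of the single integration in $\sigma$ exploited in the proof of Theorem~\ref{t.flow-projection-error-scale}.
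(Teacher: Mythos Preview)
Your proposal is correct and follows essentially the same route as the paper: the same contraction-map setup via Theorem~\ref{t.cm-stab}, the identical splitting of $\Pi-\Pi_m$ into a $G_m$ and an $I_m$ term handled by Lemma~\ref{l.diff} and Lemma~\ref{l.technical3}, and an equivalent treatment of the update step (the paper wraps your three-term split into a single application of Lemma~\ref{l.diff} with outer maps $J$ and $\P_m J$). One small remark: for the local version the extra $h$-regularity from \eqref{e.RKShA.B2} is not actually needed---\eqref{e.RKShA.A} already suffices---so your emphasis on it is harmless but superfluous here (it becomes relevant only in the uniform version, Theorem~\ref{t.projection-error-scale-unif}).
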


\begin{proof}
The proof of \eqref{e.m-s} is an application of
Theorem~\ref{t.cm-stab} on the stability of contraction mappings
where, as in the setting of Theorem~\ref{t.h-diff}, $\Pi$ is defined
by \eqref{eq:newRKStagesIter}, $\Pi_m= \P_m \circ \Pi$, and we set
\begin{equation}
  \label{e.localWjSpace}
  \kW_j = \kB_{R}^{\kZ_j} (\1 U^0)  
  \qquad \text{where} \qquad \kZ_j = \kY_j^s 
\end{equation}
for $j = 0, \dots, K$.  We further identify $\kI = (0, h_*)$, $w = W$,
$\mu=h$, and $\kX=\kY_K$ with $\kU \equiv \kU_K =
\kB_{R_*}^{\kY_K}(U^0)$.

We already verified in the proof of Theorem~\ref{t.h-diff} that
conditions (i) and (ii) of Theorem~\ref{t.cm-stab} hold with uniform
bounds for $m\geq m_*$.  Thus, Theorem~\ref{t.cm-stab} (b) yields a
bound of the form \eqref{e.w-m-s} provided we can show that
\begin{equation}
  \label{e.PiPim-h}
  \Norm{\Pi-\Pi_m}{N-1,K,0,P} = O(m^{-P}) \,.
\end{equation}
Writing
\begin{multline}
  \Pi(W; U, h) - \Pi_m(W; U, h) 
  = (\id - h \a A)^{-1} \, \1 \Q_m U  
    + h \a \, (\id - h \a A)^{-1} \, \Q_m B(W) \,,
  \label{e.splitting}
\end{multline}
we apply Lemma~\ref{l.diff} to both terms on the right-hand side as follows.
For the first term,
\[
  G_m(W; U, h) \equiv \Q_m(\id - h \a A)^{-1} \1 \, \Q_m U \,,
\]
we take $\Pi=\id$, $\Sigma=\P_m$, and $v(U,h)=w(U,h) = (\id-h \a
A)^{-1} \1 U$ to conclude that there exists $c_1$ such that
\begin{align}
  \Norm{G_m}{N-1,K,0,P} 
  = \norm{G_m}{N-P-1,K-P} 
  \leq c_1 \Norm{\Q_m}{N-1,K,0,P}
  = O(m^{-P}) \,,
  \label{e.1stTerm_Tech}
\end{align}
where the first equality is due to \eqref{e.3Norm2Norm}, we recall
Lemma~\ref{l.RKShA} for the differentiability properties of $(\id - h
\a A)^{-1}$, and note that the final statement is due to estimate
\eqref{e.qm-estimate} on the Galerkin remainder.

To estimate the second term on the right-hand side of \eqref{e.splitting},  
apply Lemma~\ref{l.diff} for each $\kappa \in P, \dots, K$ with $\Pi =
\id$, $\Sigma=\P_m$, $u=W$, and $v(u,h)=w(u,h)= h \a \, (\id - h \a
A)^{-1} B(W)$, $\kU$ replaced by $\kU_\kappa \equiv
\kB_{R}^{\kZ_\kappa}(\1 U^0)$, $N$ replaced by $N-1$, and $K$ replaced
by $\kappa$.  Hence, by Lemma~\ref{l.RKShA}, there is some $c_2$ such
that
\[
  \norm{\Q_m \, h \a \, (\id - h \a A)^{-1} B(W)}{N-1-P,\kappa-P}
  \leq c_2 \, \Norm{\Q_m}{N-1,\kappa,0, P} = O(m^{-P}) \,.
\]
Then, Lemma~\ref{l.technical3} with $N$ replaced by $N-1$ and $S=P$
implies
\begin{equation}
  \Norm{\Q_m \, h \a \, (\id - h \a A)^{-1} B}{N-1,K,0,P}
  = O(m^{-P}) \,.
  \label{e.2ndTerm_Tech}
\end{equation}
The constants $c_1$ and $c_2$ depend only on the bounds from (B1) and
the bounds from Lemma~\ref{l.RKShA}.  Altogether, we have proved
\eqref{e.PiPim-h}; the proof of \eqref{e.w-m-s} is complete.

To prove estimate \eqref{e.psi-m-s}, note that by
\eqref{eq:NewRKUpdate}, 
\begin{equation}
  \label{e.Dhpsi-psim}
  \Psi^h(U) - \Psi^h_m(U)
  = \mS(hA) \Q_m U 
      + {\b}^T \, (J(W(U,h),h) - \P_m J(W_m(U,h),h)) \,,
\end{equation}
where
\begin{equation}
  \label{e.JW}
  J(W;U,h) = h (\id - h \a A)^{-1}B(W ) \,,
\end{equation}
so that
\begin{align}
  \norm{\Psi^h - \Psi^h_m }{N-1-P,K-P} 
  & \leq \norm{\mS(h A) \Q_m U}{N-1-P,K-P}
    \notag \\ 
  & \quad + s \, \norm{\b}{} \, 
      \norm{J \circ W-\P_m J \circ W_m}{N-1-P,K-P} \,.
  \label{e.Psipsim}
\end{align}
We estimate the first term of \eqref{e.Psipsim} using
Lemma~\ref{l.diff} with $\Pi=\id$, $\Sigma=\P_m$, $u=U$,
$\kZ_j=\kY_j$, $\kW_j = \kB_{R}^{\kY_j}(U^0)$ for $j=0, \dots, K$, and
$w(u,h) = v(u,h)=\mS(h A) U$.  For the second term of
\eqref{e.Psipsim}, we use Lemma~\ref{l.diff} with $\Pi=J$, $\Sigma =
\P_m J$, $\kZ_j$ and $\kW_j$ from \eqref{e.localWjSpace} as before,
$w(U,h) = W(U,h)$, and $v(U,h) = W_m(U,h)$.  Thus, by
Lemma~\ref{l.RKShA}, there exists $c_4$ such that
\begin{align}
  \norm{\Psi^h - \Psi^h_m}{N-1-P,K-P} 
  & \leq c_4 \, \Norm{\Q_m U}{N-1,K,0,P}+ 
         c_4 \, \Norm{\Q_m J}{N-1,K,0,P}
    \notag \\
  & \quad + c_4 \, \norm{W-W_m}{N-1-P,K-P} \,.
    \label{e.PsiPsihmDeriv}
\end{align}
The first term is $O(m^{-P})$ by \eqref{e.qm-estimate}; to obtain the
required estimate for the second term we proceed as in the computation
proving \eqref{e.2ndTerm_Tech}, but with $h (\id - h \a A)^{-1} B$ in
place of $h \a (\id - h \a A)^{-1} B$; the third term is $O(m^{-P})$
by \eqref{e.w-m-s}.
\end{proof}

\begin{theorem}[Projection error for the numerical method, uniform
version] 
\label{t.projection-error-scale-unif} 
Assume \textup{(A)}, \textup{(B1)}, \textup{(RK1)}, and
\textup{(RK2)}.  Choose $\delta \in (0,\delta_*]$ small enough such
that $\kD_{K+1}^{-\delta}$ is nonempty, and let $h_* = h_*(\delta)>0$
and $m_*=m_*(\delta)$ be as in Theorem~\ref{t.h-diff-unif}.  Then
\eqref{e.m-s} holds true with respect to the uniform setting
\eqref{e.uniformWSpaces}.  Moreover, for every $0 \leq P \leq K+1$ and
$N>K+1$,
\begin{subequations}
  \label{e.m-s-unif}
\begin{gather}
  \norm{W - W_m}{N-1-P,K+1-P} = O(m^{-P}) 
  \label{e.w-m-s-unif}
\intertext{and, for $\a$ invertible,}
  \norm{\Psi^h - \Psi^h_m}{N-1-P,K+1-P} = O(m^{-P}) \,,
  \label{e.psi-m-s-unif}
\end{gather}
\end{subequations}
where the norm in \eqref{e.m-s-unif} is defined with respect to the
spaces \eqref{e.uniformWSpaces}.  The order constants depend only on
the bounds afforded by \textup{(B1)}, \eqref{e.Rk}, and
\eqref{e.RKShA.1}, on the coefficients of the method, and on $\delta$.
\end{theorem}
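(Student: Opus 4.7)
The approach parallels that of Theorem~\ref{t.flow-projection-error-scale-unif}: first verify that the local-version bounds \eqref{e.m-s} hold uniformly on $\kD_{K+1}^{-\delta}$, then lift the scale by one rung by separately estimating an ``$A$-applied'' projection error and an $h$-derivative projection error, and finally assemble the result via Lemma~\ref{l.technical1}. For $\psi$ the lifting requires the invertibility of $\a$, mirroring the analogous requirement in Theorem~\ref{t.h-diff-unif}.

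For the first claim, I would revisit the proof of Theorem~\ref{t.projection-error-scale} and check that each step remains uniform under the hypotheses of Theorem~\ref{t.h-diff-unif}. Conditions (i) and (ii) of Theorem~\ref{t.cm-stab} for $\Pi_m$ were already established uniformly in $U^0 \in \kD_{K+1}^{-\delta}$ and $m \geq m_*(\delta)$ in the proof of Theorem~\ref{t.h-diff-unif}. The sole remaining task is to verify \eqref{e.PiPim-h}, namely $\Norm{\Pi - \Pi_m}{N-1,K,0,P} = O(m^{-P})$, with constants depending only on the uniform data; this is immediate from the splitting \eqref{e.splitting}, since the only $U^0$-dependence enters through $\Q_m U$, which is uniformly controlled via \eqref{e.qm-estimate} and \eqref{e.Rk}. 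Theorem~\ref{t.cm-stab} (b) then gives \eqref{e.w-m-s} uniformly, and \eqref{e.psi-m-s} follows by applying Lemma~\ref{l.diff} to \eqref{e.Dhpsi-psim} as in the local proof.

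To prove \eqref{e.w-m-s-unif}, I would introduce a linear fixed point equation for $\tilde W = A W$ analogous to \eqref{e.tildePim}. Since $A$ commutes with $(\id - h\a A)^{-1}$ and with $\a$, applying $A$ to $W = \Pi(W; U, h)$ and rewriting the inhomogeneity by integration-by-parts against $\D B(W)$, so as to avoid demanding $U \in \kY_{K+2}$, produces an equation $\tilde W = \tilde\Pi(\tilde W; U, h)$ whose linear part is a contraction on $\kY^s_j$ for $j = 0, \dots, K$; the analogous construction gives $\tilde w_m = \tilde\Pi_m(\tilde w_m; U, h)$. The difference $\tilde\Pi - \tilde\Pi_m$ splits exactly as \eqref{e.splitting} and is controlled via Lemma~\ref{l.diff}, Lemma~\ref{l.DiffhatPiSigma}, and Lemma~\ref{l.technical3}, yielding $\Norm{\tilde\Pi - \tilde\Pi_m}{N-2, K, 0, P} = O(m^{-P})$. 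Theorem~\ref{t.cm-stab} (b) then delivers $\tnorm{AW - Aw_m}{N-P-2, K-P} = O(m^{-P})$, that is, $\tnorm{W - w_m}{N-P-1, K+1-P, 1} = O(m^{-P})$. A parallel argument applied to the linear fixed point equation for $\partial_h W$ and $\partial_h w_m$, obtained by differentiating $W = \Pi(W; U, h)$ in $h$, produces the companion estimate $\tnorm{\partial_h W - \partial_h w_m}{N-P-2, K-P} = O(m^{-P})$. Lemma~\ref{l.technical1} with $L = 0$ assembles these bounds with the uniform version of \eqref{e.w-m-s} into \eqref{e.w-m-s-unif}.

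For \eqref{e.psi-m-s-unif}, I would return to \eqref{e.Dhpsi-psim}. The first term $\mS(hA)\Q_m U$ inherits the $(K+1)$-scale estimate from $U \in \kY_{K+1}$ via \eqref{e.RKShA.C} and \eqref{e.qm-estimate} together with Lemma~\ref{l.diff}. For the stage contribution $\b^T(J(W) - \P_m J(w_m))$ with $J$ from \eqref{e.JW}, invertibility of $\a$ allows $h(\id - h\a A)^{-1}$ to be expressed modulo bounded operators of class \eqref{e.RKShA.B} rather than \eqref{e.RKShA.B2}, so that no rung on the scale is consumed per $h$-derivative; combining this with \eqref{e.w-m-s-unif} and Lemma~\ref{l.diff} gives \eqref{e.psi-m-s-unif}. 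The main technical obstacle throughout is the bookkeeping: tracking how each application of Lemma~\ref{l.diff}, Lemma~\ref{l.technical3}, and Lemma~\ref{l.technical1} consumes orders of differentiability and rungs of the scale, and verifying that the linear fixed-point equations for $AW$ and $\partial_h W$ genuinely satisfy the hypotheses of Theorem~\ref{t.cm-stab} with the degradation $N \to N-2$ that is already reflected in the statement.
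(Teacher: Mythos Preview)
Your overall architecture matches the paper's: first verify \eqref{e.m-s} uniformly on $\kD_{K+1}^{-\delta}$ by rechecking the estimates of Theorem~\ref{t.projection-error-scale}, then separately establish an $A$-applied projection error and a $\partial_h$-projection error, and combine via Lemma~\ref{l.technical1}. The paper does exactly this.

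Where you diverge is in the mechanism for the two intermediate estimates. For $AW - Aw_m$ you propose a linear fixed-point equation for $\tilde W = AW$ obtained by ``integration by parts against $\D B(W)$'', imitating $\tilde\Pi_m$ from \eqref{e.tildePim}. But that construction is tied to the time-integral structure of the mild formulation and has no analogue for the algebraic stage equation: there is nothing to integrate by parts, and $AB(W)$ cannot be rewritten as $\D B(W)\,AW$. What the paper exploits instead is that $h\a A(\id - h\a A)^{-1}$ is bounded by \eqref{e.RKShA.b}, so applying $A$ to \eqref{eq:newRKStagesIter} gives the \emph{explicit} formula
\[
  AW = (\id - h\a A)^{-1}\,\1 AU + h\a A(\id - h\a A)^{-1} B(W),
\]
and $AW - Aw_m$ (likewise $A\Psi^h - A\psi_m^h$, using invertibility of $\a$) is then estimated directly via Lemma~\ref{l.diff}, with no auxiliary fixed point. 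This actually yields $\tnorm{AW - Aw_m}{N-1-P,K-P}$ rather than the $N-2-P$ you anticipate.

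For $\partial_h W - \partial_h w_m$, your plan (differentiate in $h$ and treat the result as a new linear fixed-point problem) is valid, but the paper shortcuts it: it reapplies Theorem~\ref{t.cm-stab} to the \emph{same} maps $\Pi,\Pi_m$ on an artificially extended $(K{+}1)$-scale with $\kZ_{K+1}=\kY_K^s$ (duplicating the top rung), which is admissible precisely because $h(\id - h\a A)^{-1}$ gains one $h$-derivative by \eqref{e.RKShA.B2}. This delivers \eqref{e.dtW-wm} without writing out a separate contraction, and the same device handles the $J$-term in \eqref{e.Dhpsi-psim} for $\partial_h\Psi - \partial_h\psi_m$.
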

 
\begin{proof}
For each $U^0 \in \kD_{K+1}^{-\delta}$, define $\Pi$, $\Pi_m$,
$\kW_j=\kB_{\tilde{R}}^{\kY_j^s}(\1 U^0)$, $\kZ_j=\kY_j^s$,
$\kI=(0,h_*)$, etc., as in the proof of Theorem~\ref{t.h-diff}.  We
first show that \eqref{e.m-s} holds true with respect to the uniform
setting \eqref{e.uniformWSpaces}.  We already verified in the proof of
Theorem~\ref{t.h-diff-unif} that conditions (i) and (ii) of
Theorem~\ref{t.cm-stab} hold uniformly in $U^0 \in
\kD_{K+1}^{-\delta}$ and $m\geq m_*$.  Further, by checking uniformity
of all required estimates, we verify that \eqref{e.PiPim-h}, in the
proof of Theorem~\ref{t.projection-error-scale}, holds uniformly in $U^0
\in \kD_{K+1}^{-\delta}$.  Applying Theorem~\ref{t.cm-stab} (b) for
all $U^0 \in \kD_{K+1}^{-\delta}$ then implies that \eqref{e.w-m-s}
holds in the uniform setting \eqref{e.uniformWSpaces}.  Similarly,
\eqref{e.PsiPsihmDeriv} holds uniformly in $U^0 \in
\kD_{K+1}^{-\delta}$ so that \eqref{e.psi-m-s} holds with respect to
the uniform setting \eqref{e.uniformWSpaces}.
  
We next show that, for $N>K$,
\begin{subequations}
\label{e.m-s-A}
\begin{equation}
  \norm{AW - AW_m}{N-1-P,K-P} = O(m^{-P}) 
  \label{e.w-m-s-A}
\end{equation}
and, for $\a$ invertible,
\begin{equation}
  \norm{A\Psi^h - A\Psi^h_m }{N-1-P,K-P} = O(m^{-P}) \,,
  \label{e.psi-m-s-A}
\end{equation}
\end{subequations}
in the uniform setting \eqref{e.uniformWSpaces}.  We apply $A$ onto
\eqref{e.splitting} as well as onto the expression for
$\Psi^h-\Psi^h_m$.  The resulting difference expressions are then
estimated as in the proof of Theorem~\ref{t.projection-error-scale}
using Lemma \ref{l.RKShA}, in particular \eqref{e.RKShA.b}.  Now, we
aim to show that, for $N>K+1$ and $0 \leq P \leq K$,
\begin{subequations}
  \label{e.dtWAndPsi-WmAndPsim1}
\begin{equation}
  \label{e.dtW-wm}
  \norm{\partial_hW - \partial_h W_m}{N-P-2,K-P} = O(m^{-P})
\end{equation}
and 
\begin{equation}
  \label{e.dtPsi-psim}
  \norm{\partial_h\Psi - \partial_h \Psi_m}{N-P-2,K-P} = O(m^{-P}) \,.
\end{equation}
\end{subequations}
To prove \eqref{e.dtW-wm}, we apply Theorem~\ref{t.cm-stab} on the
stability of contraction mappings to the pair $\Pi$ from
\eqref{eq:newRKStagesIter} and $\Pi_m = \P_m \circ \Pi$, but this time
on the $(K+1)$ scale $\kZ_j =\kY^s_j$ for $j=0, \dots, K$ and
$\kZ_{K+1} = \kY^s_K$ with $\kW_j = \kD^s_j$ for $j=0, \dots, K$ and
$\kW_{K+1} = \kD_K^s$.  Set, as before, $\kU=\kD_{K+1}^{-\delta}$,
$\kX = \kY_{K+1}$, and $\kI=(0,h_*)$.  Due to \eqref{e.RKShA.B2} and
(B1), the map $\Pi_m$ satisfies the assumptions of
Theorem~\ref{t.cm-stab} in this setting for $m\geq m_*$.  We obtain
$\Norm{\Pi-\Pi_m}{N-1,K+1,0,P} = O(m^{-P})$ as in the proof of
Theorem~\ref{t.projection-error-scale}. By Theorem~\ref{t.cm-stab}
(b), this implies $\norm{W-W_m}{N-P-1,K+1-P} = O(m^{-P})$ with respect
to the above defined hierarchy, and in particular \eqref{e.dtW-wm}.
   
Estimate \eqref{e.dtPsi-psim} is proved similarly.  We estimate the
norms of both terms in \eqref{e.Dhpsi-psim}, with $J$ as in
\eqref{e.JW}.  First, as in the proof of Theorem
\ref{t.projection-error-scale}, using Lemma~\ref{l.diff} with
\[
 \Pi=\id, \Sigma=\P_m, u=U, \kZ_j=\kY_j, \kW_j =
\kB_{r}^{\kY_j}(U^0) ~~\mbox{for}~~j=0, \dots, K+1,
\]
 where $r>0$ is such that
$\kD_{K+1} \subset \kB_r^{\kY_{K+1}}(0)$, and $w(u,h) = v(u,h)=\mS(h
A) U$, we obtain
\[
  \norm{\mS(hA) \Q_m U}{N-P-1,K+1-P}= O(m^{-P}).
\]
Thus, in particular, 
\begin{equation}
  \label{e.DhSmhA}
  \norm{\partial_h \mS(hA) \Q_m U}{N-P-2,K-P} = O(m^{-P})
\end{equation}
holds with respect to the uniform setting \eqref{e.uniformPsiSpaces}.

We now estimate the second term of \eqref{e.Dhpsi-psim}.  Consider the
$(K+1)$-scale from above, i.e., $\kZ_j =\kY^s_j$ for $j=0, \dots, K$,
$\kZ_{K+1} = \kY^s_K$, $\kW_j = \kD^s_j$ for $j=0, \dots, K$,
$\kW_{K+1} = \kD_K^s$, $\kU=\kD_{K+1}^{-\delta}$, $\kX = \kY_{K+1}$,
and $\kI=(0,h_*)$.  Due to \eqref{e.RKShA.B2} and (B1), the maps $J$
and $\P_m J$ satisfy the assumptions of Lemma~\ref{l.diff} in this
setting, and, as we have seen above, $W$ and $W_m$ also satisfy the
conditions of Lemma~\ref{l.diff} for this choice of scale and $m\geq
m_*$.  We obtain $\Norm{Q_m J}{N-1,K+1,0,P} = O(m^{-P})$ as in the
proof of Theorem~\ref{t.projection-error-scale}, and
\[
 \norm{J \circ W - \P_m J \circ W_m}{N-1-P,K+1-P} = O(m^{-P})
\]
for the above choice of scale.  This implies that, with respect to the
uniform setting \eqref{e.uniformPsiSpaces}, we have
\[
  \norm{\partial_h (J \circ W - \P_m J \circ W_m)}{N-2-P,K-P}
  = O(m^{-P})
\]
which, together with \eqref{e.DhSmhA} and \eqref{e.Dhpsi-psim},
implies \eqref{e.dtPsi-psim}.

Finally, Lemma~\ref{l.technical1}, given that \eqref{e.m-s} holds
in the uniform setting \eqref{e.uniformPsiSpaces} and together with
estimates \eqref{e.m-s-A} and \eqref{e.dtWAndPsi-WmAndPsim1}, implies
\eqref{e.m-s-unif}.
\end{proof}
 
\begin{remark} 
If, in the setting of Theorem~\ref{t.projection-error-scale-unif}, the
matrix $\a$ is not assumed to be invertible, then \eqref{e.dtPsi-psim}
still holds, cf.\ Remark~\ref{r.discr-improved-t-diff}.
\end{remark}

%%%%%%%%%%%%%%%%%%%%%%%%%%%%%%%%%%%%%%%%%%%%%%%%%%%%%%%%%%%%%%%%%%%%%%%%%%%%%%%
%%%%%%%%%%%%%%%%%%%%%%%%%%%%%%%%%%%%%%%%%%%%%%%%%%%%%%%%%%%%%%%%%%%%%%%%%%%%%%%
%%%%%%%%%%%%%%%%%%%%%%%%%%%%%%%%%%%%%%%%%%%%%%%%%%%%%%%%%%%%%%%%%%%%%%%%%%%%%%%
%%%%%%%%%%%%%%%%%%%%%%%%%%%%%%%%%%%%%%%%%%%%%%%%%%%%%%%%%%%%%%%%%%%%%%%%%%%%%%%
%%%%%%%%%%%%%%%%%%%%%%%%%%%%%%%%%%%%%%%%%%%%%%%%%%%%%%%%%%%%%%%%%%%%%%%%%%%%%%%
\appendix

\section{Stability of contraction mappings}
\label{s.appendix}

Abstract contraction mapping theorems on a scale of Banach spaces have
been obtained in \cite{OW2010,Vand87,Wulff00}.  For the results in
this paper, we must, in addition, estimate the stability of the fixed
point under perturbation of the contraction map.
 
For $K \in \N_0$, let ${\kZ} = {\kZ}_0 \supset {\kZ}_1 \supset\ldots
\supset {\kZ}_{K}$ be a scale of Banach spaces, each continuously
embedded in its predecessor, and let $\kV_j, \kW_j \subset \kZ_j$ be
nested sequences of sets.  Let $\kX$ be a Banach space, and let $\kU
\subset \kX$ and $\kI \subset \R$ be open.  We note that all results
in this section easily extend to the case where $\kI$ is an open
subset of $\R^p$.  We may assume that $\lVert w \rVert_{\kZ_j} \leq
\lVert w \rVert_{\kZ_{j+1}}$ for all $w \in \kZ_{j+1}$.  (If this is
not the case, we inductively equip $\kZ_{j+1}$ with the equivalent
norm $\lVert \, \cdot \, \rVert_{\kZ_{j+1}} + \lVert \, \cdot \,
\rVert_{\kZ_{j}}$.)

As detailed in Section~\ref{ss.notation}, we use the following
additional integer indices.  The minimal regularity we guarantee for
the image space of the function considered is the scale rung $L$, the
``loss index'' $S$ indicates how many rungs on the scale the range of
a function is down relative to its domain, and $N$ denotes the maximal
regularity of the function.  We assume $0 \leq L \leq K-S \leq N-S$.
We work with the
family of spaces
\begin{subequations}
\begin{equation}
  \label{e.CNKLS}
  \kC_{N,K,L,S}(\{\kV_j\}, \kU,\kI; \{ \kW_j\}) =
  \bigcap_{\substack{i+j+k \leq N-S \\ 
                     L + \ell \leq k \leq K-S}}
  \kCb^{(\underline i, \underline j, \ell)}
    ({\kV}_{k+S} \times{ \kU} \times  \kI; \kW_{k-\ell}) \,,
\end{equation} 
endowed with the  norm \eqref{e.norm4Pi}, and abbreviate
\begin{gather}
  \label{e.CNKS}
  \kC_{N,K,L}(\{\kV_j\}, \kU,\kI; \{ \kW_j\}) 
  = \kC_{N,K,L,0}(\{\kV_j\}, \kU,\kI; \{ \kW_j\}) \,, \\
  \label{e.CNK00}
  \kC_{N,K}(\{\kV_j\}, \kU,\kI; \{ \kW_j \}) 
  = \kC_{N,K,0,0}(\{\kV_j\}, \kU,\kI; \{ \kW_j\})
\end{gather} 
with corresponding norms \eqref{e.norm3Pi} and \eqref{e.norm2Pi},
respectively.  We note that any function of class \eqref{e.CNKLS} has
a maximal number of $N-L-S$ derivatives in its first and second
argument on the lowest admissible domain scale $\kZ_{L+S}$.

Furthermore, let
\begin{equation}
  \label{e.CNKL}
  \kC_{N,K,L}( \kU,\kI; \{ \kW_j \})
  = \bigcap_{\substack{j+k \leq N \\ L+\ell \leq k \leq K}}
      \kCb^{(\underline j,\ell)} (\kU \times \kI; \kW_{k-\ell}) \,,
\end{equation}
endowed with the norm \eqref{e.norm3w}, and abbreviate
\begin{equation}
  \label{e.CNK0}
  \kC_{N,K}( \kU,\kI; \{\kW_j\}) = \kC_{N,K,0}( \kU,\kI; \{\kW_j\})
\end{equation}
\end{subequations}
with corresponding norm \eqref{e.norm2w}.  For future reference, we
note the following.

\begin{remark} \label{r.no-w} 
When a map $\Pi \in \kC_{N,K,L,S}(\{\kV_j\}, \kU,\kI; \{ \kW_j\})$
does not depend on $w$, it can be interpreted as an element from
$\kC_{N,K,L}(\kU,\kI; \{ \kW_j\})$ where, by \eqref{e.3Norm2Norm},
$\Norm{\Pi}{N,K,L,S} = \norm{\Pi}{N-S,K-S,L}$.
\end{remark}

We simply write $\kC_{N,K,L,S}$ and $\kC_{N,K,L}$ when the arguments
are unambiguous.  We also write
\[
  \partial_\mu \Pi(w(u,\mu);u,\mu) 
  = \partial_\mu \Pi(w;u,\mu) \big|_{w=w(u,\mu)}
  = (\partial_\mu \Pi \circ w)(u,\mu)
\] 
to denote partial $\mu$-derivatives whereas we write \ $\D_\mu
(\Pi(w(u,\mu),u,\mu))$ to denote full $\mu$-derivatives.

We begin with four technical lemmas which can be proved by simple
index arithmetic.  Details can be found in \cite{OW2010}.

\begin{lemma} \label{l.cm-scale}
If $N>K$  then, with $\kW \equiv \kW_0$,
\[
  \kC_{N,K}(\kU,\kI;\{\kW_j\}) \subset \kCb^K (\kU \times \kI; \kW) \,.
\]
\end{lemma}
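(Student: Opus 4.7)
The plan is to read off $\kCb^K$-membership directly from the intersection defining $\kC_{N,K}$, exploiting the standard equivalence between joint Fréchet differentiability on a product space and the existence of all mixed partial derivatives. Concretely, membership of $w$ in $\kCb^K(\kU \times \kI; \kW)$ is equivalent to: for every $j + \ell \leq K$, the mixed partial $\D_u^{j}\partial_\mu^{\ell} w$ exists as a bounded continuous map into $\kE^j(\kX; \kW)$ with continuous extension to the closure. My goal will be to verify precisely these properties.

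The key step is a short index check. I would fix $(j,\ell)$ with $j+\ell \leq K$ and choose $k = \ell$. Then $\ell \leq k \leq K$ is trivial, while $j+k = j+\ell \leq K < N$ by the hypothesis $N > K$, so the triple $(j,k,\ell)$ satisfies the constraints appearing in the intersection \eqref{e.CNK0}. It follows that $w \in \kCb^{(\underline j,\ell)}(\kU \times \kI; \kW_{k-\ell}) = \kCb^{(\underline j,\ell)}(\kU \times \kI; \kW_0)$, which by the definition of the class $\kCb^{(\underline{m},n)}$ supplies $\D_u^{j'}\partial_\mu^{\ell'} w$ as a bounded continuous $\kW$-valued map with the required continuous extension to the boundary of $\kU \times \kI$ for all $j' \leq j$, $\ell' \leq \ell$. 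The corresponding uniform bound is subsumed by \eqref{e.norm2w}.

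With every mixed partial of total order at most $K$ now in hand, I would conclude by invoking the standard inductive fact that such a collection of partials assembles into a joint $K$-fold Fréchet derivative on the product $\kU \times \kI \subset \kX \times \R$: namely, $\D^K w$ acts as a symmetric $K$-multilinear form on $(\kX \times \R)^K$ which decomposes as a finite sum of mixed partials of total order $K$, so that its continuity, boundedness, and boundary extension are inherited termwise from the previous step.

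I do not expect any substantive obstacle. The entire argument is essentially index bookkeeping combined with a well-known Banach-space calculus fact. The only minor care needed is to confirm that the continuous extension to the boundary written into the definition of $\kCb^{(\underline j,\ell)}$ is preserved under the final assembly of mixed partials into $\D^K w$, which is automatic from the linearity of the decomposition.
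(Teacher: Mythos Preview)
The paper does not prove this lemma itself; it defers to \cite{OW2010} and describes the argument only as ``simple index arithmetic''. Your index bookkeeping is in that spirit and is correct as far as it goes, but there is a genuine gap in the final assembly step. The underline in $\kCb^{(\underline j,\ell)}$ signals that only \emph{strong} continuity of $\D_u^{j}\partial_\mu^{\ell} w$ in the $u$-variable is guaranteed: by definition it is the evaluated map $(u,\mu,u_1,\dots,u_j)\mapsto \D_u^{j}\partial_\mu^{\ell} w(u,\mu)(u_1,\dots,u_j)$ that is jointly continuous, not the operator-valued map $(u,\mu)\mapsto \D_u^{j}\partial_\mu^{\ell} w(u,\mu)$ in the norm of $\kE^j(\kX;\kW)$. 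The standard fact you invoke --- that continuous mixed partials assemble into a joint Fr\'echet derivative --- requires the latter. A red flag is that your index check actually only uses $N\geq K$, whereas the lemma hypothesises $N>K$ strictly.

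The strict inequality is precisely what closes the gap. For any mixed partial with $j+\ell\leq K$, the triple $(j+1,\ell,\ell)$ lies in the index set of \eqref{e.CNKL} (with $L=0$) because $(j+1)+\ell\leq K+1\leq N$; hence $\D_u^{j+1}\partial_\mu^{\ell} w$ exists and is uniformly bounded as an operator, and the mean value inequality upgrades $\D_u^{j}\partial_\mu^{\ell} w$ to Lipschitz continuity in $u$ in the operator norm. Likewise, when $\ell<K$ the triple $(j,\ell+1,\ell+1)$ is admissible and yields Lipschitz continuity in $\mu$; when $\ell=K$ one has $j=0$, the operator is $0$-linear, and strong and norm continuity coincide. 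With norm continuity of all mixed partials of order at most $K$ in hand, your assembly argument then goes through as written.
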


\begin{lemma} \label{l.technical2} 
Suppose that 
\begin{itemize}
 \item[(i)] $w \in \kC_{N,K,L}(\kU,\kI;\{\kW_j\})$;
\item[(ii)]  the map
$(u,\tilde{u},\mu) \mapsto \D_u w(u,\mu) \tilde{u}$ is of class
$\kC_{N,K,L}(\kU \times \kB_1^\kX(0),\kI; \{\kZ_j\})$.
\end{itemize}
   Then $w \in
\kC_{N+1,K,L}(\kU,\kI;\{\kW_j\})$ and
\[
  \norm{w}{N+1,K,L} 
  \leq \sup_{\|\tilde{u}\|_{\kX} \leq 1}
  \norm{\D_u w \, \tilde{u}}{N,K,L} + \norm{w}{N,K,L} \,.
\]
\end{lemma}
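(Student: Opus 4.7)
The strategy is pure index arithmetic, based on the elementary identity
\[
  \D_u^j \partial_\mu^\ell w(u,\mu)(\tilde u_1,\dots,\tilde u_j)
  = \D_u^{j-1}\partial_\mu^\ell\bigl[\D_u w(\cdot,\cdot)\,\tilde u_1\bigr](u,\mu)(\tilde u_2,\dots,\tilde u_j)
\]
valid for $j\geq 1$, which lets us shift one $u$-derivative from $w$ onto the auxiliary map $(u,\tilde u,\mu)\mapsto \D_u w(u,\mu)\tilde u$. First I would unpack the norm $\tnorm{w}{N+1,K,L}$: it is the supremum of $\tnorm{\D_u^j\partial_\mu^\ell w}{\kL_\infty(\kU\times\kI;\kE^j(\kX;\kZ_{k-\ell}))}$ over multi-indices $(j,k,\ell)$ subject to the two side conditions $j+k\le N+1$ and $L+\ell\le k\le K$. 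I would then split the maximum into the two exhaustive cases $j=0$ and $j\ge 1$ and bound each by one of the two quantities on the right-hand side.

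For the case $j\ge 1$, I apply the identity above and note that after the shift the new multi-index $(j-1,k,\ell)$ satisfies $(j-1)+k\le N$ and $L+\ell\le k\le K$, i.e.~exactly the constraints defining $\tnorm{\cdot}{N,K,L}$. Therefore, holding $\tilde u_1$ fixed with $\|\tilde u_1\|_\kX\le 1$ and regarding $\D_u w\,\tilde u_1$ as a function of $(u,\mu)$ only, the shifted derivative is bounded by $\tnorm{\D_u w\,\tilde u_1}{N,K,L}$. Taking the supremum over $\tilde u_1$ in the closed unit ball yields a bound by $\sup_{\|\tilde u\|_\kX\le 1}\tnorm{\D_u w\,\tilde u}{N,K,L}$, which is exactly the first term on the right of the claimed inequality (continuity and boundedness of the corresponding $j$-multilinear map follow at once from the hypothesis that $(u,\tilde u,\mu)\mapsto \D_u w(u,\mu)\tilde u\in\kC_{N,K,L}$ together with linearity in $\tilde u$).

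For the case $j=0$, the standing hypothesis $0\le L\le K\le N$ on the scale parameters forces $k\le K\le N$, so the multi-index $(0,k,\ell)$ automatically lies in the index set defining $\tnorm{w}{N,K,L}$. Hence $\tnorm{\partial_\mu^\ell w}{\kL_\infty(\kU\times\kI;\kZ_{k-\ell})}\le \tnorm{w}{N,K,L}$, which provides the second summand. Taking the maximum over both cases gives the stated norm inequality; the membership $w\in\kC_{N+1,K,L}(\kU,\kI;\{\kW_j\})$ then follows because the two hypotheses supply the continuity of each $\D_u^j\partial_\mu^\ell w$ with $j+k\le N+1$ and its continuous extension to the boundary. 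There is no real obstacle here beyond careful bookkeeping; the only point to verify explicitly is that the $\tilde u$-derivative of $\D_u w\cdot\tilde u$ (taken as part of the combined $(u,\tilde u)$-Fréchet derivative in $\kC_{N,K,L}(\kU\times\kB^{\kX}_1(0),\kI;\{\kZ_j\})$) does indeed recover $\D_u w$, which is immediate from linearity in $\tilde u$.
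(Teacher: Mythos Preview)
Your argument is correct and is precisely the ``simple index arithmetic'' the paper invokes (the paper does not spell out a proof here, deferring details to \cite{OW2010}). The split into $j=0$ and $j\geq 1$, together with the shift identity for higher Fr\'echet derivatives and the observation that $k\leq K\leq N$ handles the $j=0$ case, is exactly the intended bookkeeping.
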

 
\begin{lemma} \label{l.technical1}
When $N>K$, $w \in \kC_{N,K+1,L+1}(\kU,\kI;\{\kW_j\})\cap
\kC_{N,L,L}(\kU,\kI;\{\kW_j\}) $, and $\partial_\mu w \in
\kC_{N-1,K,L}(\kU,\kI;\{\kZ_j\})$, then
\[
  w \in \kC_{N,K+1,L} (\kU,\kI;\{\kW_j\})
\]
and
\[
  \norm{w}{N,K+1,L} 
  \leq \norm{w}{N,K+1,L+1} 
       + \norm{w}{N,L,L}
       + \norm{\partial_\mu w}{N-1,K,L} \,.
\]
\end{lemma}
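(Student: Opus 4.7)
The plan is to establish the membership $w \in \kC_{N,K+1,L}$ by a straightforward index analysis: every partial derivative $\D_u^j \partial_\mu^\ell w$ of $w$, considered as a map into some rung $\kW_r$ of the scale that is required by $\kC_{N,K+1,L}$, already appears (and is bounded/continuous) in one of the three source spaces appearing in the hypothesis. The norm bound then follows term by term.

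Unwinding the definition \eqref{e.CNKL}, membership in $\kC_{N,K+1,L}$ amounts to requiring that the partial derivative $\D_u^j \partial_\mu^\ell w$ be continuous and bounded from $\kU \times \kI$ into $\kE^j(\kX;\kW_r)$ (with $r = k-\ell$) for every index triple $(j,\ell,r)$ with $L \leq r$, $r+\ell \leq K+1$, and $j+r+\ell \leq N$. So the first step is to fix an arbitrary such triple and show that it is already admissible for one of the three function classes $\kC_{N,K+1,L+1}$, $\kC_{N,L,L}$, or (after shifting) $\kC_{N-1,K,L}$ applied to $\partial_\mu w$.

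I would then proceed by case analysis on $(\ell,r)$:
\begin{itemize}
\item If $\ell \geq 1$, write $\D_u^j \partial_\mu^\ell w = \D_u^j \partial_\mu^{\ell-1}(\partial_\mu w)$ and set $\ell' = \ell-1 \geq 0$. The constraints transform to $L \leq r$, $r + \ell' \leq K$, and $j + r + \ell' \leq N-1$, which are precisely the admissibility conditions of $\kC_{N-1,K,L}$ for $\partial_\mu w$. Consequently the corresponding derivative is bounded by $\norm{\partial_\mu w}{N-1,K,L}$.
\item If $\ell = 0$ and $r \geq L+1$, the triple satisfies $L+1 \leq r \leq K+1$ and $j+r \leq N$, which is exactly the admissibility condition for $\kC_{N,K+1,L+1}$. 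The corresponding derivative is bounded by $\norm{w}{N,K+1,L+1}$.
\item If $\ell = 0$ and $r = L$, the triple satisfies $\ell = 0$, $r = L$, and $j + L \leq N$, which lies in the admissibility range of $\kC_{N,L,L}$. The corresponding derivative is bounded by $\norm{w}{N,L,L}$.
\end{itemize}
In each case the boundedness, the continuity of the derivative map as required in the definition of $\kCb^{(\underline j,\ell)}$, and continuous extension to the boundary are inherited directly from the corresponding source space, so no additional regularity argument is needed.

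The bound on $\norm{w}{N,K+1,L}$ follows immediately: it is the maximum of the $\kL_\infty$-norms of $\D_u^j\partial_\mu^\ell w$ over the admissible triples, and by the trichotomy each such norm is dominated by the corresponding term on the right-hand side, so the maximum is bounded by the sum $\norm{w}{N,K+1,L+1} + \norm{w}{N,L,L} + \norm{\partial_\mu w}{N-1,K,L}$. The hypothesis $N > K$ ensures that the top-rung constraints $r+\ell = K+1$ are nontrivially satisfiable by some $j \geq 0$, so there is nothing vacuous in the statement. I do not anticipate any serious obstacle: the argument is entirely combinatorial in the indices, and the only bookkeeping care is the index shift $\ell \mapsto \ell - 1$ that moves a $\mu$-derivative onto $\partial_\mu w$.
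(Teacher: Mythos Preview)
Your proposal is correct and is precisely the ``simple index arithmetic'' argument the paper alludes to; the paper itself does not spell out a proof of this lemma but defers the details to \cite{OW2010}, so your case analysis on $(\ell,r)$ matches the intended approach.
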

 
\begin{lemma} \label{l.technical3}
We have
\[
  \bigcap_{S \leq \kappa\leq K} \kC_{N-S,\kappa-S,L}(\kV_\kappa \times
    \kU, \kI; \{ \kW_j\})
  = \kC_{N,K,L,S}(\{\kV_j\},\kU,\kI; \{\kW_j\}) \,,
\]
and
\[
  \Norm{\Pi}{\kC_{N,K,L,S}(\{\kV_j\},\kU;\kI;\{ \kW_j \})} 
  \sim \max_{S \leq \kappa \leq K}
       \norm{\Pi}{\kC_{N-S,\kappa-S,L}(\kV_\kappa \times \kU,\kI;
                  \{ \kW_j \})} \,,
\]
where $\sim$ denotes that left hand and right-hand sides provide
equivalent norms on $\kC_{N,K,L,S}$.
\end{lemma}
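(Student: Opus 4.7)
The proof is a pure index-bookkeeping exercise, mirroring the argument for the analogous result in \cite{OW2010}. The plan is to rewrite the norms on both sides of the claimed equivalence using the elementary mixed partials $\D_w^i \D_u^j \partial_\mu^\ell \Pi$ and then observe that the two resulting maxima are taken over index sets generating the same quantities.

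First, I would unfold $\kC_{N-S,\kappa-S,L}(\kV_\kappa \times \kU,\kI;\{ \kW_j \})$ for fixed $\kappa \in \{S,\ldots,K\}$ by using the standard identification of the total Fr\'echet derivative of order $m$ on $\kZ_\kappa \times \kX$ with the family $\{ \D_w^i \D_u^{m-i} \Pi \}_{i=0}^{m}$ of mixed partials; the relevant multilinear-form norms are equivalent with constants depending only on $m \leq N$. Applying \eqref{e.norm3w} with the ``$u$''-variable taken to be $(w,u) \in \kV_\kappa \times \kU$ and relabelling, the norm of $\Pi$ on the left-hand side of the equivalence becomes equivalent to
\[
  \max_{\substack{i+j+k \leq N-S \\ L+\ell \leq k \leq \kappa - S}}
  \norm{\D_w^i \D_u^j \partial_\mu^\ell \Pi}
  {\kL_\infty(\kV_\kappa \times \kU \times \kI;
    \kE^i(\kZ_\kappa, \kE^j(\kX; \kW_{k-\ell})))} \,.
\]
The right-hand side $\Norm{\Pi}{N,K,L,S}$ from \eqref{e.norm4Pi} is the analogous maximum, but with $\kappa$ replaced throughout by $k+S$ (so that the $w$-domain is $\kV_{k+S}$ and the ambient space of the $i$-th derivative is $\kZ_{k+S}$).

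Second, I would verify the two-sided bound by exhibiting an index correspondence. For the direction ``left-hand side dominates right-hand side'', given any tuple $(i,j,k,\ell)$ in the right-hand range (i.e., $i+j+k \leq N-S$ and $L+\ell \leq k \leq K-S$), the choice $\kappa := k+S$ lies in $\{S,\ldots,K\}$ and reproduces the right-hand term verbatim. For the converse, any admissible left-hand parameters satisfy $\kappa \geq k+S$, so $\kV_\kappa \subset \kV_{k+S}$, and the scale inequality $\norm{w}{\kZ_{k+S}} \leq \norm{w}{\kZ_\kappa}$ yields, for any $i$-multilinear form $T$ on $\kZ_{k+S}$,
\[
  \norm{T|_{\kZ_\kappa^i}}{\kE^i(\kZ_\kappa, \, \cdot \,)}
  \leq \norm{T}{\kE^i(\kZ_{k+S}, \, \cdot \,)} \,.
\]
Thus each left-hand term is dominated by the corresponding right-hand term with the same $(i,j,k,\ell)$, which is admissible because $k \leq \kappa - S \leq K-S$. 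The same correspondence establishes equality of the two function spaces: the differentiability of $\Pi$ as a map from $\kV_{k+S} \times \kU \times \kI$ required for membership in $\kC_{N,K,L,S}$ is supplied by the $\kappa = k+S$ factor of the intersection, and conversely every factor of the intersection is subsumed in $\kC_{N,K,L,S}$ via the embeddings $\kV_\kappa \hookrightarrow \kV_{k+S}$ and $\kZ_\kappa \hookrightarrow \kZ_{k+S}$.

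The only obstacle here is notational: one needs to set up the correspondence $\kappa \leftrightarrow k+S$ cleanly and to keep track of which maxima are being compared. There is no analytic content beyond the standard fact that a total Fr\'echet derivative is, up to combinatorial constants, equivalent to the collection of its mixed partials.
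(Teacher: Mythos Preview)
Your proposal is correct and matches the paper's treatment: the paper does not give a proof but states that the lemma ``can be proved by simple index arithmetic'' with details deferred to \cite{OW2010}, and your argument is precisely that index arithmetic---unfold the total derivative on $\kV_\kappa \times \kU$ into mixed partials, then use the correspondence $\kappa \leftrightarrow k+S$ together with the embeddings $\kV_\kappa \hookrightarrow \kV_{k+S}$ and $\kZ_\kappa \hookrightarrow \kZ_{k+S}$ for $\kappa \geq k+S$.
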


We now prove a stability result for fixed points of contraction
mappings, i.e., we want to bound norms of differences of fixed points
in terms of norms of differences of contraction maps.  To do so, we
first need to look at a corresponding stability result for
compositions.

The following lemma states that the difference between two functions
which are both compositions of functions can be estimated by the
difference of the outer functions and the difference of the inner
functions, and that the same holds for derivatives of the difference.
Here $S$ is the minimal smoothness of the image of the inner functions
and of the domain of the outer functions, $K$ is the number of scales
and $N-S$ is the maximal number of derivatives of the inner functions.
Finally, $P$ is used to relax the required smoothness in the
estimates.

\begin{lemma}[Stability of compositions] \label{l.diff} 
For $0 \leq S+P \leq K \leq N$, suppose $\Pi=\Pi(w; u,\mu)$,
$\Sigma=\Sigma(w; u,\mu)$, $w=w(u,\mu)$, and $v=v(u,\mu)$ satisfy
\begin{itemize}
\item[(i)] $\Pi, \Sigma \in \kC_{N+1,K,0,S}( \{ \kW_j\}, \kU, \kI; \{
  \kZ_j\})$;
\item[(ii)] $w, v \in  \kC_{N,K,S}(\kU, \kI; \{ \kW_{j} \})$.
\end{itemize}
The following then hold.
\begin{itemize}
\item[(a)] $\Pi \circ w \in \kC_{N-S,K-S}(\kU,\kI; \{ \kW_j \})$ and
$\lVert \Pi \circ w \rVert_{N-S,K-S,L}$ can be bounded by a polynomial
with non-negative coefficients in $\Norm{\Pi}{N,K,L,S}$ and $\lVert w
\rVert_{N,K,S+L}$; the same holds true for $\Sigma \circ v$.

\item[(b)] There is some $c>0$ which is a polynomial with non-negative
coefficients in $\Norm{\Pi}{N+1,K,0,S}$, $\tnorm{w}{N,K,S}$,
$\Norm{\Sigma}{N+1,K,0,S}$, and $\tnorm{v}{N,K,S}$ such that
\begin{equation}
  \label{e.DiffPiCircu}
\norm{\Pi \circ w-\Sigma \circ v}{N-P-S,K-P-S} 
  \leq c \, \Norm{\Pi-\Sigma}{N,K,0,P+S} + c \, \norm{w-v}{N-P,K-P,S} \,.
\end{equation}
\end{itemize}
\end{lemma}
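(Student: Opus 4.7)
The plan is to prove part (a) by the multivariate (Fa\`a di Bruno) chain rule on the scale of Banach spaces and then to deduce part (b) from the splitting
\[
  \Pi \circ w - \Sigma \circ v = (\Pi - \Sigma) \circ w
    + \int_0^1 \D_w \Sigma\bigl(v + t(w-v); u, \mu\bigr)(w-v) \, \d t \,,
\]
in which the first summand is estimated by part (a) applied to $\Pi - \Sigma$ with the larger loss index $P+S$, and the second by part (a) applied to the integrand together with the Leibniz rule for the product with $w-v$.

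For part (a), I would expand $\D_u^j \partial_\mu^\ell (\Pi \circ w)$ as a sum over partitions of products of a single factor $\D_w^r \D_u^{j'} \partial_\mu^{\ell'} \Pi$, evaluated at $(w(u,\mu), u, \mu)$, with $r$ factors of the form $\D_u^{j_s} \partial_\mu^{\ell_s} w$, subject to $j = j' + \sum_s j_s$ and $\ell = \ell' + \sum_s \ell_s$. To land in $\kE^j(\kX; \kZ_{k-\ell})$ for any target $k$ with $\ell \leq k \leq K-S$ and $j + k \leq N-S$, one evaluates $\Pi$ at scale rung $k+S$, which in turn places each $w$-factor at an intermediate rung $k_s$ with $k_s \geq \ell_s$. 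Hypotheses (i) and (ii) furnish the requisite bounds; summing over partitions gives a polynomial bound in $\Norm{\Pi}{N,K,L,S}$ and $\tnorm{w}{N,K,S+L}$, and for $L = 0$ only $\tnorm{w}{N,K,S}$ is consumed, as supplied by (ii).

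For part (b), applying part (a) to $\Pi - \Sigma \in \kC_{N+1,K,0,S}$ with loss index $P+S$ yields
\[
  \norm{(\Pi - \Sigma) \circ w}{N-P-S,K-P-S}
  \leq c_1 \, \Norm{\Pi - \Sigma}{N,K,0,P+S} \,,
\]
where $c_1$ is polynomial in $\tnorm{w}{N,K,S}$. For the second summand, written as $G \cdot (w - v)$ with $G(u,\mu) = \int_0^1 \D_w \Sigma\bigl(v + t(w-v); u, \mu\bigr) \, \d t$, part (a) applied uniformly in $t$ to $\D_w \Sigma$ composed with the convex combination $v + t(w - v) \in \kC_{N,K,S}$ yields $G \in \kC_{N-S,K-S}$ with norm polynomial in $\Norm{\Sigma}{N+1,K,0,S}$, $\tnorm{w}{N,K,S}$, and $\tnorm{v}{N,K,S}$. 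The Leibniz rule then produces $\norm{G \cdot (w-v)}{N-P-S, K-P-S} \leq c_2 \, \norm{w - v}{N-P, K-P, S}$, and summing the two contributions gives \eqref{e.DiffPiCircu}.

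The main obstacle is the bookkeeping of the indices $(N, K, L, S, P)$ through the Fa\`a di Bruno and Leibniz expansions, specifically the verification that the scale loss of $\Pi - \Sigma$, namely $P + S$ rungs, is exactly compensated by the $S$-rung scale gain of $w$ together with the $P$-rung smoothness reduction absorbed into $w - v$. This arithmetic proceeds in parallel to calculations already present in \cite{OW2010}, so the proof reduces to a careful tracking of multi-indices through the four-parameter norm \eqref{e.norm4Pi} and the three-parameter norm \eqref{e.norm3w}.
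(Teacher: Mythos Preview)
Your approach is correct and genuinely different from the paper's.  The paper proves part (b) by a double induction on $N$ and $K$: it starts from the base case $N=K=P+S$ (using the same mean value inequality you invoke), and at each inductive step it differentiates once---in $u$ when incrementing $N$, in $\mu$ when incrementing $K$---and reduces to the induction hypothesis via the auxiliary linear maps $\hat\Pi$, $\hat\Sigma$ of Lemma~\ref{l.DiffhatPiSigma}.  Part~(a) is not proved in the paper at all; it is cited from \cite[Lemma~A.6]{OW2010}, where it is likewise obtained by induction rather than by an explicit Fa\`a di Bruno expansion.  Your route---direct multivariate chain rule for (a), then the splitting into $(\Pi-\Sigma)\circ w$ plus an integral remainder for (b)---is more transparent conceptually, and the index arithmetic you sketch does close (in particular, applying part (a) to $\Pi-\Sigma$ with loss index $P+S$ is legitimate because $\Norm{\,\cdot\,}{N,K,0,S}$ dominates $\Norm{\,\cdot\,}{N,K,0,P+S}$, and the Fa\`a di Bruno expansion is linear in the outer map, giving the required linear dependence on $\Norm{\Pi-\Sigma}{N,K,0,P+S}$).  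The paper's inductive argument, on the other hand, avoids the combinatorics of partitions entirely and makes the scale bookkeeping local to each step; it also dovetails directly with the proof of Theorem~\ref{t.cm-stab}, which follows the same inductive pattern.

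Two small points to tighten in your write-up.  First, the integral remainder requires the segment between $w(u,\mu)$ and $v(u,\mu)$ to lie in each $\kW_j$; convexity of the $\kW_j$ is not stated in the hypotheses, so you should either assume it (it holds in every application in the paper, where the $\kW_j$ are balls) or note that the paper's own proof makes the same implicit assumption at its base case.  Second, when you write ``$G \in \kC_{N-S,K-S}$'' you must specify the target: $G(u,\mu)$ is an operator whose domain and range rungs vary with the scale index, so the clean formulation is to work instead with $\hat\Sigma_t(\hat w;u,\mu)=\D_w\Sigma(v+t(w-v);u,\mu)\,\hat w$, show $\hat\Sigma_t\in\kC_{N,K,0,S}$ uniformly in $t$ (this uses the full hypothesis $\Sigma\in\kC_{N+1,K,0,S}$ and $v,w\in\kC_{N,K,S}$), and then apply part~(a) to $\hat\Sigma_t\circ(w-v)$ with parameters $(N-P,K-P,S)$, the linearity in $\hat w$ yielding the desired linear bound in $\tnorm{w-v}{N-P,K-P,S}$.
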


\begin{remark} \label{r.cm-composition} Part (a) was already shown in
\cite[Lemma A.6]{OW2010} and is the chain rule on the scale of Banach
spaces.  It will be our main tool for obtaining estimates on the scale
of Banach spaces for compositions of maps of the form $(\Pi\circ
w)(u,\mu) \equiv \Pi(w(u,\mu);u,\mu)$.  The essence of the result is
very natural: When the outer function $\Pi$ loses $S$ rungs on the
scale, the inner function $w$ must have minimal regularity $L=S$ and
the composition maps at best into the scale rung $K-S$.
\end{remark}

\begin{proof}
We prove part (b) only.  It follows the same pattern as the proof of
part (a) with the additional difficulty that we need to carefully keep
track of differences in the various spaces.  We proceed by induction
in $N$ and $K$.  For $N=K=P+S$,
\begin{align*}
  \norm{\Pi\circ w-\Sigma \circ v}{\kC(\kU \times \kI; \kZ)}
  & \leq \norm{\Pi\circ w-\Pi\circ v}{\kC(\kU \times \kI; \kZ)}
       + \norm{\Pi\circ v-\Sigma \circ v}{\kC(\kU \times \kI; \kZ)}
       \notag \\
  & \leq c_0 \, \norm{w-v}{\kC(\kU \times \kI; \kZ_{S})}
         + \Norm{\Pi - \Sigma}{P+S,P+S,0,P+S} \,,
\end{align*}
where, by the mean value theorem, $c_0 = \Norm{\Pi}{P+S+1,P+S,0,P+S}$.

Let us now increment $N$ holding $P$ and $K$ fixed.  Let $\kB \equiv
\kB_1^\kX(0)$.  By Lemma~\ref{l.technical2}, it is sufficient to
derive the claimed upper bound for the $\kC_{N-P-S,K-P-S}(\kU\times
\kB,\kI;\{\kW_j\})$ norm of the function which maps
$((u,\tilde{u}),\mu) \in (\kU\times\kB)\times \kI$ to
\begin{align}
  \D_u(\Pi\circ w-\Sigma\circ v) \, \tilde{u} 
  & = (\partial_u \Pi \, \tilde{u}) \circ w  -  
      (\partial_u \Sigma \, \tilde{u}) \circ v 
      \notag \\ 
  & \quad + \hat \Pi \circ (\D_u w \, \tilde{u}) 
          - \hat \Sigma \circ (\D_u v \, \tilde{u})  \,,
  \label{e.diff1}
\end{align}
where $\hat\Pi$ and $\hat\Sigma$ are defined in \eqref{e.hatPiSigma}
below.

To estimate the first line of the right-hand side of \eqref{e.diff1}, we define
\[
\Pi_1(w;(u, \tilde u),\mu) = \partial_u \Pi(w;u,\mu) \, \tilde{u} ~~\quad{and}~~
\Sigma_1(w;(u, \tilde u),\mu) = \partial_u \Sigma(w;u,\mu) \,
\tilde{u}.
\]  Then, by the induction hypothesis, there is a constant $c_1$
which is a polynomial in $\Norm{\Pi}{N+1,K,0,S} \geq
\Norm{\Pi_1}{N,K,0,S}$, $\Norm{\Sigma}{N+1,K,0,S}$, $\norm{w}{N+1,K,S}
\geq \norm{w}{N,K,S}$, and $\norm{v}{N+1,K,S}$ such that
\begin{align*}
  & \norm{\Pi_1 \circ w - \Sigma_1 \circ v}{N-P-S,K-P-S}
    \notag \\
  & \qquad \leq c_1 \, \Norm{\Pi_1 - \Sigma_1}{N,K,0,P+S} + 
                c_1 \, \norm{w-v}{N-P,K-P,S} \\
  & \qquad \leq c_1 \, \Norm{\Pi - \Sigma}{N+1,K,0,P+S} + 
                c_1 \, \norm{w-v}{N-P,K-P,S} \,.
\end{align*}
To estimate the second line of the right-hand side of \eqref{e.diff1}, fix
\begin{equation}
  \label{e.rDiff}
  r = \max \{ \norm{w}{N+1,K,S}, \norm{v}{N+1,K,S} \}
\end{equation}
and set $\kV_j = \kB_r^{\kZ_j}(0)$ for $j = S, \dots, K$.  By
Lemma~\ref{l.DiffhatPiSigma} (a) below, the maps $\hat\Pi$,
$\hat\Sigma$ satisfy condition (i), i.e.,
\[
  \hat \Pi, \hat \Sigma 
  \in \kC_{N+1,K,0,S} (\{\kV_j\}, \kU, \kI; \{\kZ_j\}) \,.
\]
For fixed $\tilde{u} \in \kB$, a direct estimate verifies that
$\hat{w}((u,\tilde{u}),\mu) = \D_u w(u,\mu) \, \tilde{u}$ and
$\hat{v}((u,\tilde{u}),\mu) = \D_u v(u,\mu) \, \tilde{u}$ map
$(\kU\times\kB) \times \kI$ into each of the domains $\kV_S, \dots,
\kV_K$ of $\hat\Pi$ and $\hat\Sigma$.  Hence, $\hat{w}$ and $\hat{v}$
satisfy assumption (ii).  Then, by  the induction hypothesis, there is some
constant $c_2$ such that
\begin{align}
  & \norm{\hat \Pi \circ \hat{w} -
          \hat \Sigma \circ \hat{w}}{N-P-S,K-P-S}
    \notag \\
  & \qquad \leq c_2 \, \Norm{\hat\Pi - \hat\Sigma}{N,K,0,P+S} +
                c_2 \, \norm{\hat{w} - \hat{v}}{N-P,K-P,S}
    \notag \\
  & \qquad \leq c_2 \, \Norm{\hat\Pi - \hat\Sigma}{N,K,0,P+S} +
                c_2 \, \norm{w-v}{N+1-P,K-P,S} \,.
  \label{e.diffhatPiSigma}
\end{align}
We note that $c_2$ is a polynomial in $\tnorm{w}{N,K,S}$,
$\tnorm{v}{N,K,S}$, $\Norm{\hat\Pi}{N+1,K,0,S}$, which, by 
Lemma~\ref{l.DiffhatPiSigma} (a),   is  bounded by a  polynomial in
$\tnorm{w}{N+1,K,S}$, $\Norm{\Pi}{N+2,K,0,S}$, and $r$,  and also in $\Norm{\hat\Sigma}{N+1,K,0,S}$,
which 
is bounded  by a polynomial in
$\tnorm{v}{N+1,K,S}$, $\Norm{\Sigma}{N+2,K,0,S}$, and $r$.

To estimate the term $\Norm{\hat\Pi-\hat\Sigma}{N,K,0,P+S}$ in the
last inequality above, note that the maps
\[
  \Pi_2(w; (\hat{w},u), \mu) = \D_w \Pi(w;u,\mu) \, \hat{w}
  \quad \text{and} \quad
  \Sigma_2(w; (\hat{w},u), \mu) = \D_w \Sigma(w;u,\mu) \, \hat{w}
 \]
satisfy, for $P+S \leq \kappa \leq K$,
\[
  \Pi_2, \Sigma_2 \in \kC_{N+1,\kappa,0,S}( \{\kW_j\},
  \kV_\kappa \times \kU, \kI; \{\kZ_j\}) \,.
\]
So the induction hypothesis applies once again, asserting that there
is a constant $c_3$ such that
\[
  \norm{\Pi_2 \circ w  - \Sigma_2 \circ v}{N-P-S,\kappa-P-S} 
  \leq c_3 \, \Norm{\Pi_2-\Sigma_2}{N,\kappa,0,P+S}  
       + c_3 \, \norm{w-v}{N-P,\kappa-P,S} 
  \]
where, for all $P+S \leq \kappa \leq K$,
\[
  \Norm{\Pi_2 - \Sigma_2}{N,\kappa,0,P+S} 
  \leq r \, \Norm{\Pi-\Sigma}{N+1,K,0,P+S}
\]
and $c_3$ is a polynomial in $\Norm{\Pi}{N+2,K,0,S}$,
$\Norm{\Sigma}{N+2,K,0,S}$, $\tnorm{w}{N+1,K}$, $\tnorm{v}{N+1,K}$,
and $r$.  Therefore, by Lemma~\ref{l.technical3}, there is some
constant $c_4$, which is independent of $\Pi_2$, $\Sigma_2$, $v$, and
$w$, such that
\begin{align*}
  \Norm{\hat\Pi-\hat\Sigma}{N,K,0,P+S} 
  & = \Norm{\Pi_2 \circ w - \Sigma_2 \circ v}{N,K,0,P+S} \\
  & \leq c_4 \, \max_{P+S \leq \kappa \leq K} 
      \Norm{\Pi_2 \circ w - \Sigma_2 \circ v}{N-P-S,\kappa-P-S} \\
  & \leq r \, c_3 \, c_4 \, \Norm{\Pi-\Sigma}{N+1,K,0,P+S}
         + c_3 \, c_4 \, \norm{w-v}{N-P,K-P,S} \,.
\end{align*}
This last assertion is summarized in Lemma~\ref{l.DiffhatPiSigma}
(with $N$ replaced by $N-1$).  This concludes the inductive step in
$N$.

Second, we prove that the conclusion also holds when we increment
$K-S$ when $K<N$, holding $N$ fixed.  By Lemma~\ref{l.technical1},
\begin{multline}
  \norm{\Pi\circ w-\Sigma \circ v}{N-P-S,K-P-S+1} 
  \leq \norm{\Pi\circ w-\Sigma\circ v}{N-P-S,K-P-S+1,1} \\
  + \norm{\Pi \circ w - \Sigma \circ v}{N-P-S,0}
  + \norm{\D_\mu (\Pi\circ w- \Sigma\circ v)}{N-P-S-1,K-P-S} \,.
  \label{e.norm-splitting-2}
\end{multline}
To estimate the first term on the right-hand side, note that we can apply the
induction hypothesis on the translated scale $\tilde\kZ_j =\kZ_{j+1}$,
$\tilde\kW_j=\kW_{j+1}$.  Thus, there is a constant $c_5$ with the
required polynomial dependence such that 
\begin{align*}
  & \norm{\Pi\circ w-\Sigma\circ v}%
         {\kC_{N-P-S,K-P-S+1,1}(\kU, \kI; \{ \kZ_j \})} \\
  & \quad = \norm{\Pi\circ w-\Sigma\circ v}%
                 {\kC_{N-P-S,K-P-S}(\kU, \kI; \{ \tilde \kZ_j \})} \\
  & \quad \leq c_5 \, 
             \Norm{\Pi-\Sigma}{\kC_{N,K,0,P+S}(\{ \tilde \kW_j \},
                   \kU, \kI; \{ \tilde \kZ_j \})} 
    + c_5 \, \norm{w-v}%
                  {\kC_{N-P,K-P,S}(\kU, \kI; \{ \tilde \kZ_j \})} \\
  & \quad =  c_5 \, \Norm{\Pi-\Sigma}
                  {\kC_{N,K+1,1,P+S}(\{\kW_j\}, \kU, \kI; \{\kZ_j\})} 
    + c_5 \, \norm{w-v}{\kC_{N-P,K+1-P,1+S}(\kU, \kI; \{\kZ_j\})} \,.
\end{align*}
For the second term on the right-hand side of \eqref{e.norm-splitting-2}, we
apply the induction hypothesis on the trivial scale, obtaining that
there is a constant $c_6$ with the required polynomial dependence such
that
\[
  \norm{\Pi\circ w-\Sigma\circ v}{N-P-S,0}
  \leq c_6 \, \Norm{\Pi - \Sigma}{N,P+S,0,P+S} + 
       c_6 \, \norm{v-w}{N-P,S,S} \,.
\]
For the third term on the right-hand side of \eqref{e.norm-splitting-2}, we
estimate
\begin{multline}
  \norm{\D_\mu (\Pi \circ w) - \D_\mu(\Sigma \circ v)}{N-P-S-1,K-P-S}
  \leq \norm{\partial_\mu \Pi\circ w - 
             \partial_\mu \Sigma \circ v}{N-P-S-1,K-P-S}  \\ +
       \norm{\hat\Pi \circ  \partial_\mu w -
             \hat\Sigma \circ \partial_\mu v}{N-P-S-1,K-P-S} \,.
  \label{e.diff3}
\end{multline}
To estimate the first term on the right-hand side of \eqref{e.diff3}, notice
that $\Pi,\Sigma\in \kC_{N+1,K+1,0,S}$ implies $\partial_\mu \Pi,
\partial_\mu \Sigma \in \kC_{N+1,K+1,0,S+1}$.  Since $w,v \in
\kC_{N,K+1,1+S}$, we conclude that $\partial_\mu \Pi$, $\partial_\mu
\Sigma$, $w$ and $v$ satisfy the assumptions of the lemma.  Since
$K-S$ is not incremented, the induction hypothesis applies and proves
that there is a constant $c_7$ with the required polynomial dependence
such that
\begin{align*}
  & \norm{\partial_\mu \Pi \circ w - 
          \partial_\mu \Sigma \circ v}{N-P-(S+1),K+1-P-(S+1)}
    \notag \\
  & \qquad \leq c_7 \, 
    \Norm{\partial_\mu \Pi - \partial_\mu \Sigma}{N,K+1,0,P+S+1} 
    + c_7 \, \norm{w-v}{N-P,K+1-P,S+1}
    \notag \\
  & \qquad \leq c_7 \, 
    \Norm{\Pi - \Sigma}{N,K+1,0,P+S}
    + c_7 \, \norm{w-v}{N-P,K+1-P,S} \,.
\end{align*}
To estimate the second term on the right-hand side of \eqref{e.diff3}, 
we fix
\[
  r = \max \{ \norm{w}{N,K+1,S}, \norm{v}{N,K+1,S} \} \,,
\]
set $\kV_j = \kB_r^{\kZ_j}(0)$, and recall from above that $\hat \Pi,
\hat \Sigma \in \kC_{N,K}(\{\kV_j\}, \kU,\kI;\{\kZ_j\})$, cf.\
Lemma~\ref{l.DiffhatPiSigma}.  Then, $\partial_\mu w$ and
$\partial_\mu v$ map $\kU\times\kI$ into each of the domains $\kV_S,
\dots, \kV_K$ of $\hat\Pi$, $\hat\Sigma$.  Applying the induction
hypothesis to $\hat\Pi$, $\hat\Sigma$ and $\partial_\mu w$,
$\partial_\mu v$, we obtain that there exists a constant $c_8$ such
that
\begin{align*}
  & \norm{\hat\Pi \circ \partial_\mu w -
          \hat\Sigma \circ \partial_\mu v}{N-P-S-1,K-P-S} \\
  & \qquad\qquad 
  \leq c_8 \, \Norm{\hat\Pi-\hat\Sigma}{N-1,K,0,P+S} +
       c_8 \, \norm{\partial_\mu w - \partial_\mu v}{N-1-P,K-P,S} \\
  & \qquad\qquad 
  \leq c_8 \, \Norm{\hat\Pi-\hat\Sigma}{N-1,K,0,P+S} +
       c_8 \, \norm{w-v}{N-P,K+1-P,S} \,.
\end{align*}
The first term on the right-hand side is estimated as before, yielding
a bound of the form \eqref{e.normdiffHatPiSigma}.

We have thus found the required upper bounds for all terms on the
right-hand side of \eqref{e.norm-splitting-2}, thereby completing the inductive
step also when $K$ is incremented.
\end{proof}

In the proof of Lemma~\ref{l.diff}, we used part (a) and proved
statement (b) of the following lemma which we state for later
reference.  A proof of part (a) can be found in
\cite[Lemma~A.7]{OW2010}.

\begin{lemma} \label{l.DiffhatPiSigma}
Let $\Pi$, $\Sigma$, $w$, and $v$ be as in Lemma~\ref{l.diff}; let
$r>0$, $\kV_j = \kB_r^{\kZ_j}(0)$ for $j = S, \dots, K$, $0 \leq P
\leq \min(N-1,K)$, and
\begin{equation}
  \label{e.hatPiSigma}
  \hat\Pi(\hat{w};u,\mu) 
  = (\partial_w \Pi \circ w)(u,\mu) \, \hat{w}
  \quad \text{and} \quad 
  \hat\Sigma(\hat{w};u,\mu) 
  = (\partial_w \Sigma \circ v)(u,\mu) \,\hat{w}
\end{equation}
   The following then hold:
\begin{itemize}
\item[(a)]
$
  \hat \Pi \in \kC_{N-1,K,0,S}(\{\kV_j\}, \kU,\kI; \{ \kZ_j\})
$
with $\Norm{\hat\Pi}{N-1,K,0,S}$ bounded by a polynomial in
$\tnorm{w}{N-1,K,S}$ and $r \, \Norm{\Pi}{N,K,0,S}$, and the same
holds true for $\hat\Sigma$.

\item[(b)] There is some polynomial $c \geq 0$ in
$\Norm{\Pi}{N+1,K,0,S}$, $\Norm{\Sigma}{N+1,K,0,S}$,
$\tnorm{w}{N,K,S}$, $\tnorm{v}{N,K,S}$, and $r$ such that
\begin{equation}
  \label{e.normdiffHatPiSigma}
  \Norm{\hat\Pi-\hat\Sigma}{N-1,K,0,P+S} 
  \leq c \, \Norm{\Pi-\Sigma}{N,K,0,P+S} + 
       c \, \norm{w-v}{N-1-P,K-P,S} \,.
\end{equation}
\end{itemize}
\end{lemma}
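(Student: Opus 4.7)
The plan is to prove part~(a) by direct appeal to \cite[Lemma~A.7]{OW2010}, so that the substance of the work lies in part~(b). For (b), the guiding idea is to reformulate $\hat\Pi-\hat\Sigma$ as a difference of compositions to which Lemma~\ref{l.diff}(b) itself applies.

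To this end, I would introduce the auxiliary maps
\begin{equation*}
  \Pi_2(w;(\hat w,u),\mu) = \D_w \Pi(w;u,\mu)\,\hat w,
  \qquad
  \Sigma_2(w;(\hat w,u),\mu) = \D_w \Sigma(w;u,\mu)\,\hat w,
\end{equation*}
treating $(\hat w,u)\in\kV_\kappa\times\kU$ as a combined domain variable for each $\kappa\in\{P+S,\ldots,K\}$. Since $\D_w$ costs one order of differentiability and multiplication by the linear factor $\hat w$ is free, the hypothesis $\Pi,\Sigma\in\kC_{N+1,K,0,S}$ inherited from Lemma~\ref{l.diff} yields $\Pi_2,\Sigma_2\in\kC_{N,\kappa,0,S}(\{\kW_j\},\kV_\kappa\times\kU,\kI;\{\kZ_j\})$. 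The ball bound $\tnorm{\hat w}{\kZ_\kappa}\leq r$ on $\kV_\kappa$ together with linearity in $\hat w$ further gives
\begin{equation*}
  \Norm{\Pi_2-\Sigma_2}{N-1,\kappa,0,P+S}
  \leq r\,\Norm{\Pi-\Sigma}{N,K,0,P+S}.
\end{equation*}
Under the natural identification in which $w,v$ are inserted into the first slot and depend only on $(u,\mu)$, we recover $\hat\Pi=\Pi_2\circ w$ and $\hat\Sigma=\Sigma_2\circ v$.

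Next, I would apply Lemma~\ref{l.diff}(b) to the pair $(\Pi_2,\Sigma_2,w,v)$ with its parameters $(N,K)$ replaced by $(N-1,\kappa)$ for each $\kappa\in\{P+S,\ldots,K\}$, producing
\begin{equation*}
  \norm{\Pi_2\circ w-\Sigma_2\circ v}{N-1-P-S,\kappa-P-S}
  \leq c_\kappa\,\Norm{\Pi_2-\Sigma_2}{N-1,\kappa,0,P+S}
     + c_\kappa\,\norm{w-v}{N-1-P,\kappa-P,S},
\end{equation*}
where $c_\kappa$ is a polynomial with non-negative coefficients in the quantities listed in the lemma statement. Combining these estimates over all admissible $\kappa$ via Lemma~\ref{l.technical3} then delivers the required bound \eqref{e.normdiffHatPiSigma} for $\Norm{\hat\Pi-\hat\Sigma}{N-1,K,0,P+S}$.

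The principal obstacle is \emph{circularity}: Lemma~\ref{l.DiffhatPiSigma}(b) is invoked inside the inductive step of Lemma~\ref{l.diff}(b). The resolution is to treat the two results as a joint induction on $(N,K)$: when advancing Lemma~\ref{l.diff} from level $N$ to level $N+1$, the strengthened hypothesis $\Pi,\Sigma\in\kC_{N+2,K,0,S}$ ensures $\Pi_2,\Sigma_2\in\kC_{N+1,\kappa,0,S}$, and the appeal to Lemma~\ref{l.diff}(b) on $\Pi_2\circ w-\Sigma_2\circ v$ uses strictly the previous induction level $N$, so the argument is well-founded. A secondary technical task is careful bookkeeping of polynomial dependencies so that the final constant $c$ involves only $\Norm{\Pi}{N+1,K,0,S}$, $\Norm{\Sigma}{N+1,K,0,S}$, $\tnorm{w}{N,K,S}$, $\tnorm{v}{N,K,S}$, and $r$, with the factor of $r$ stemming precisely from the multilinear bound $\tnorm{\hat w}{\kZ_\kappa}\leq r$.
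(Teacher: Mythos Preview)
Your proposal is correct and follows essentially the same route as the paper: part~(a) is deferred to \cite[Lemma~A.7]{OW2010}, and for part~(b) you introduce the same auxiliary maps $\Pi_2,\Sigma_2$, apply Lemma~\ref{l.diff}(b) at the lower induction level for each $\kappa\in\{P+S,\dots,K\}$, and reassemble via Lemma~\ref{l.technical3}. Your observation about circularity is exactly how the paper organizes things---it proves Lemma~\ref{l.DiffhatPiSigma}(b) \emph{inside} the inductive step of Lemma~\ref{l.diff}(b), so the two results are established by a single joint induction rather than as independent statements.
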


Now we are ready to prove the result on the stability of fixed points
of contraction mappings on scales of Banach spaces.

\begin{theorem}[Stability of contraction mappings] \label{t.cm-stab}
For $N, K \in \N_0$ with $N\geq K$, let ${\kZ} = {\kZ}_0 \supset
{\kZ}_1 \supset\ldots \supset {\kZ}_{K}$ be a scale of Banach spaces,
each continuously embedded in its predecessor, let $\kW_j\subset\kZ_j$
be a nested sequence of closures of open sets, let $\kX$ be a Banach
space, and let $\kU \subset \kX$ and $\kI \subset \R$ be open.   
Assume that 
\begin{itemize}
\item[(i)] $\Pi,\Sigma \in \kC_{N+1,K}(\{\kW_j\}, \kU,\kI; \{ \kW_j\})$;
\item[(ii)] $w \mapsto \Pi(w; u, \mu)$ and $v \to \Sigma(v; u, \mu)$
are contractions on ${\kW}_j$ with contraction constants $c_j'<1$, which are
uniformly for all $u \in {\kU}$ and $\mu \in \kI$,   for  $j = 0, 1,\dots, K$.
\end{itemize}
The following then hold.
\begin{itemize}
\item[(a)] The fixed point equation $\Pi(w; u,\mu) = w$ has a unique
solution
\[
  w \in \kC_{N+1,K} (\kU, \kI; \{  {\kW}_j \})
\]
and $\tnorm{w}{N+1,K}$ is bounded by a function which is a polynomial
with non-negative coefficients in $\Norm{\Pi}{N+1,K}$ and
$(1-c_j')^{-1}$, for $j=0,1,\ldots, K$.  The same holds true for the fixed point $v=\Sigma(v;
u, \mu)$.
\item[(b)] Let $P \leq K \leq N$.  Then there is some polynomial $c$
with nonnegative coefficients in $\Norm{\Pi}{N+1,K}$,
$\Norm{\Sigma}{N+1,K}$, and $(1-c_j')^{-1}$ for $j=0,1,\ldots, K$, such
that
\[
  \norm{w-v}{N-P,K-P} \leq c \, \Norm{\Pi-\Sigma}{N,K,0,P} \,.
\]
\end{itemize}
\end{theorem}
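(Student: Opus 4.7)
My plan is as follows. Part (a) is the standard contraction-mapping-theorem-on-scales argument already contained in essence in \cite{OW2010,Vand87,Wulff00}: I would apply the Banach fixed point theorem rung by rung on $\kW_0, \dots, \kW_K$, differentiate the fixed point equation $w = \Pi(w;u,\mu)$ implicitly, and obtain the bounds on $\tnorm{w}{N+1,K}$ by an induction on the total order of differentiation, using the chain rule on scales (Lemma~\ref{l.diff}(a)) to control compositions of the form $\Pi \circ w$ and absorbing the highest derivative of $w$ at each step via the contraction factor $(1-c_j')^{-1}$. The argument for $v$ is identical.

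For part (b), the key identity is
\[
  w - v = \Pi(w;u,\mu) - \Sigma(v;u,\mu)
        = \bigl[\Pi(w;u,\mu)-\Pi(v;u,\mu)\bigr]
        + \bigl[\Pi(v;u,\mu)-\Sigma(v;u,\mu)\bigr] \,.
\]
The second summand is estimated by Lemma~\ref{l.diff}(b) applied with both inner functions equal to $v$, which makes the $\norm{w-v}{}$ contribution there vanish and yields a bound of the form $c\,\Norm{\Pi-\Sigma}{N,K,0,P}$. The first summand is handled by an induction on $N$ and $K$ that mirrors the proof of Lemma~\ref{l.diff}(b) but is adapted so that the contraction structure is preserved. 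At the base case $N=K=P$ one has directly
\[
  \norm{w-v}{\kZ_0}
  \leq c_0' \, \norm{w-v}{\kZ_0} + \Norm{\Pi-\Sigma}{P,P,0,P} \,,
\]
whence $\norm{w-v}{0,0} \leq (1-c_0')^{-1}\,\Norm{\Pi-\Sigma}{P,P,0,P}$. In the induction step, differentiating $\Pi(w;u,\mu)-\Pi(v;u,\mu)$ via the chain rule on scales isolates a top-order contribution $(\partial_w \Pi \circ w) \cdot \D_u^j \partial_\mu^\ell (w-v)$ whose operator norm is bounded by $c'_{k-\ell}<1$, while all other terms involving $w-v$ turn out to be strictly of lower order in $N$ or $K$ and hence controllable by the induction hypothesis and by the accompanying estimate of Lemma~\ref{l.DiffhatPiSigma}(b) on $\hat\Pi - \hat\Sigma$.

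After collecting everything, one obtains a recursive inequality of the form
\[
  \norm{w-v}{N-P,K-P}
  \leq c'_{k-\ell}\,\norm{w-v}{N-P,K-P}
       + (\text{lower order in }N\text{ or }K)
       + c\,\Norm{\Pi-\Sigma}{N,K,0,P} \,,
\]
and absorption of the first term on the right produces the required $(1-c'_{k-\ell})^{-1}$ factors, closing the induction. The main obstacle will be the bookkeeping in the Fa\`a di Bruno--type expansion of the composition: it must be organized so that only the contraction coefficient $\partial_w \Pi \circ w$ (and not $\partial_w \Sigma \circ v$, nor any repeated higher $\partial_w^i$ terms) multiplies the top-order difference $\D_u^j \partial_\mu^\ell (w-v)$. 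This parallels the role of Lemma~\ref{l.DiffhatPiSigma}(b) in the proof of Lemma~\ref{l.diff}(b), and once this isolation is secured the remaining polynomial tracking of the constants in $\Norm{\Pi}{N+1,K}$, $\Norm{\Sigma}{N+1,K}$, and $(1-c_j')^{-1}$ is routine.
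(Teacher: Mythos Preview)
Your approach to part (b) is correct but the paper organizes the induction differently. Rather than differentiating the identity $w-v=\Pi(w;u,\mu)-\Sigma(v;u,\mu)$ and absorbing the top-order contribution via the contraction factor, the paper observes that the derivatives are themselves fixed points of \emph{derived} contraction maps: $\tilde w=\D_u w\,\tilde u$ solves $\tilde w=\tilde\Pi(\tilde w;(u,\tilde u),\mu)$ with
\[
  \tilde\Pi(\tilde w;(u,\tilde u),\mu)
  =\partial_w\Pi(w(u,\mu);u,\mu)\,\tilde w
   +\partial_u\Pi(w(u,\mu);u,\mu)\,\tilde u
\]
(a contraction with the same constants $c_j'$), and $\partial_\mu w$ solves an analogous linear fixed point equation. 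The inductive step in $N$ (resp.\ $K$, via Lemma~\ref{l.technical1}) then simply applies the induction hypothesis to the pair $(\tilde\Pi,\tilde\Sigma)$ and estimates $\Norm{\tilde\Pi-\tilde\Sigma}{N,K,0,P}$ by Lemmas~\ref{l.diff}(b) and~\ref{l.DiffhatPiSigma}(b). This sidesteps the Fa\`a di Bruno bookkeeping you anticipate: there is no need to isolate a single top-order term among many, because each derivative already solves a contraction problem to which the theorem applies at lower order.

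One small inconsistency in your write-up: if you split as $[\Pi(w)-\Pi(v)]+[\Pi(v)-\Sigma(v)]$ and differentiate only the first bracket, then $\hat\Sigma=\partial_w\Sigma\circ v$ never arises, so Lemma~\ref{l.DiffhatPiSigma}(b) on $\hat\Pi-\hat\Sigma$ is not the right tool there; what actually appears is $\partial_w\Pi(w)-\partial_w\Pi(v)$, controlled by the mean value theorem and the induction hypothesis. If instead you differentiate $\Pi(w)-\Sigma(v)$ directly, Lemma~\ref{l.DiffhatPiSigma}(b) is indeed relevant but your initial splitting becomes redundant. Either variant closes once straightened out; the paper's fixed-point-of-derivatives device is just the cleaner packaging.
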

 
Part (a) is a version of a contraction mapping theorem on a scale of
Banach spaces which was already proved in \cite[Theorem
A.9]{OW2010}.

\begin{proof}
We prove part (b) only.  It follows the same pattern as the proof of
part (a) with the additional difficulty that we need to carefully keep
track of differences in the various spaces.  As before, we use
induction in $N$ and $K$.  For $N=K=P$, we must estimate
\begin{align}
  \norm{w-v}\kZ 
  & \leq \norm{\Pi(w;u,\mu) - \Pi(v;u,\mu)}\kZ
         + \norm{\Pi(v;u,\mu) - \Sigma(v;u,\mu)}\kZ
         \notag \\
  & \leq c_0' \, \norm{w-v}\kZ 
         + \norm{\Pi(v;u,\mu) - \Sigma(v;u,\mu)}\kZ \,,
         \notag 
\end{align}
where $c_0'$ is the common contraction parameter with respect to the
$\kZ_0$ norm.  Therefore,
\[
  \norm{w-v}{\kC(\kU \times \kI; \kZ)}
  \leq \frac1{1-c_0'} \, \norm{\Pi - \Sigma}%
             {\kC (\kW_P \times \kU \times \kI; \kZ)} 
  = \frac1{1-c_0'} \, \Norm{\Pi - \Sigma}{P,P,0,P} \,. 
\]

We first prove that the conclusion also holds when we increment $N$,
holding $K$ fixed.  By Lemma~\ref{l.technical2},
\begin{equation}
  \norm{w - v}{N+1-P,K-P} 
  \leq \sup_{\|\tilde{u}\| \leq 1} \,
       \norm{(\D_u w - \D_u v) \, \tilde{u}}{N-P,K-P} + 
       \norm{w - v}{N-P,K-P} \,.
  \label{e.w-v1}
\end{equation}
By the induction hypothesis, there is a constant $c_1$ which is a
polynomial in $\Norm{\Pi}{N+1,K}$, $\Norm{\Sigma}{N+1,K}$, and
$(1-c_j')^{-1}$ for $j=0,\ldots, K$ such that
\[
  \norm{w-v}{N-P,K-P} \leq c_1 \, \Norm{\Pi - \Sigma}{N,K,0,P} \,.
\]
It remains to compute an appropriate bound for the first term on the
right-hand side of \eqref{e.w-v1}.

Note that $\tilde w (\hat u, \mu) = \D_u w(u, \mu) \, \tilde u$, where
$\hat u = (u,\tilde{u}) \in \kU \times\kB$, $\kB= \kB_1^{\kX}(0)$, is
a fixed point of the contraction map $\tilde \Pi$ given by
\begin{align}
  \tilde \Pi (\tilde w; (u,\tilde{u}), \mu)
  & = \partial_w \Pi (w(u,\mu); u, \mu) \, \tilde w
      + \partial_u \Pi (w(u,\mu); u, \mu) \, \tilde u 
    \notag \\
  & \equiv \hat \Pi (\tilde w; u, \mu) 
      + \partial_u \Pi (w(u,\mu); u, \mu) \, \tilde u \,.
  \label{e.tildePi1}
\end{align}
Similarly, $\tilde v (\hat u, \mu) = \D_u v(u, \mu) \, \tilde u$ is a
fixed point of $\tilde \Sigma$.  From part (a) we know that $w,v \in
\kC_{N+1,K}(\kU, \kI; \{\kW_j\})$.  Setting
\begin{equation}
  \label{e.r_uDeriv_stab}
  r = \max \{ \Norm{\Pi}{N+1,K}, \Norm{\Sigma}{N+1,K} \} \,
      \max_{j=0,\ldots, K} \frac1{1-c'_j}
\end{equation}
and $\kV_j = \kB_r^{\kZ_j}(0)$ for $j = 0, \dots, K$, we find by
Lemma~\ref{l.diff} (a) and Lemma~\ref{l.DiffhatPiSigma} (a) that
$\tilde\Pi, \tilde\Sigma \in \kC_{N+1,K}(\{\kV_j\},\kU \times \kB
,\kI; \{\kV_j\})$.  Hence, $\tilde\Pi$ and $\tilde\Sigma$ satisfy the
assumptions of the theorem and, by the  induction hypothesis, there is some
constant $c_2$, depending polynomially on $(1-c_j')^{-1}$ for
$j=0,\ldots, K$, $\Norm{\tilde\Pi}{N+1,K}$, and
$\Norm{\tilde\Sigma}{N+1,K}$ such that
\begin{align}
  \norm{\tilde w - \tilde v}{N-P,K-P} 
  & \leq c_2 \, \Norm{\tilde \Pi - \tilde \Sigma}{N,K,0,P} 
         \notag \\
  & \leq c_2 \, \Norm{\hat\Pi - \hat\Sigma}{N,K,0,P} 
         + c_2 \, \norm{(\partial_u \Pi \circ w - 
           \partial_u \Sigma \circ v) \, \tilde{u}}{N-P,K-P}
         \label{e.PiSigma-cm}
\end{align}
where, in the second inequality, we refer to definition
\eqref{e.hatPiSigma} of $\hat \Pi$ and $\hat \Sigma$ and to
Remark~\ref{r.no-w}.  By Lemma~\ref{l.diff} (a) and
Lemma~\ref{l.DiffhatPiSigma} (a), taking note of Remark~\ref{r.no-w},
the norms
\[ 
  \Norm{\tilde\Pi}{N+1,K} 
  \leq \Norm{\hat\Pi}{N+1,K} + 
       \norm{(\partial_u \Pi \circ w) \, \tilde{u}}{N+1,K}
\]
and $\Norm{\tilde\Sigma}{N+1,K}$ are polynomials in
$\Norm{\Pi}{N+2,K}$, $\Norm{\Sigma}{N+2,K}$, $\tnorm{w}{N+1,K}$,
$\tnorm{v}{N+1,K}$, and $r$.  Due to the definition of $r$ in
\eqref{e.r_uDeriv_stab} and part (a), these quantities, hence the
constants in \eqref{e.PiSigma-cm}, have bounds that can be chosen as
polynomials in $\Norm{\Pi}{N+2,K}$, $\Norm{\Sigma}{N+2,K}$, and
$(1-c_j')^{-1}$ for $j=0,\ldots, K$.

Applying Lemma~\ref{l.DiffhatPiSigma} to the first term on the right-hand side
of the second line of \eqref{e.PiSigma-cm} and
Lemma~\ref{l.diff} to the second term, both with $S=0$, we find that
there is a constant $c_3$ depending polynomially on
$\Norm{\Pi}{N+2,K}$, $\Norm{\Sigma}{N+2,K}$, $\norm{w}{N+1,K}$, and
$\norm{w}{N+1,K}$ such that
\begin{align*}
  \norm{\tilde w - \tilde v}{N-P,K-P} 
  & \leq c_3 \, \Norm{\Pi - \Sigma}{N+1,K,0,P} + 
         c_3 \, \norm{w-v}{N-P,K-P}
    \notag \\
  & \leq c_3 \, \Norm{\Pi - \Sigma}{N+1,K,0,P} + 
         c_4 \, \Norm{\Pi - \Sigma}{N,K,0,P}
    \notag \\
  & \leq c_5 \, \Norm{\Pi - \Sigma}{N+1,K,0,P} \,.
\end{align*}
In the second inequality we have used the induction hypothesis so that
$c_4$ and $c_5$ are polynomials in $\Norm{\Pi}{N+2,K}$,
$\Norm{\Sigma}{N+2,K}$, and $(1-c_j')^{-1}$ for $j=0,\ldots, K$.  This
concludes the inductive step in $N$.

Second, we prove that the conclusion also holds when we increment
$K<N$, holding $N$ fixed.  Recall from Lemma~\ref{l.technical1} 
that
\begin{align}  
  \norm{w-v}{N-P,K-P+1} 
  & \leq \norm{w-v}{N-P,K-P+1,1} 
    \notag \\
  & \quad + \norm{w-v}{N-P,0} 
          + \norm{\partial_\mu w - \partial_\mu v}{N-P-1,K-P} \,;
  \label{e.norm-splitting-3}
\end{align}
we will estimate the three norms on the right-hand side separately.
For the first norm note that a translation of the scale with
$\tilde\kZ_j = \kZ_{j+1}$ and the induction hypothesis show that
\begin{equation}
  \norm{w-v}{N-P,K-P+1,1}
 \leq c_6 \, \Norm{\Pi-\Sigma}{N,K+1,1,P} \,,
  \label{e.conclusion-shift}
\end{equation}
where $c_6$ is a polynomial in $\Norm{\Pi}{N+1,K+1} \geq
\Norm{\Pi}{N+1,K+1,1}$, $\Norm{\Sigma}{N+1,K+1}$, and $(1-c'_j)^{-1}$
for $j=0, \dots, K+1$.

For the second term on the right-hand side of \eqref{e.norm-splitting-3}, we
apply the induction hypothesis on the trivial scale, so that is a
constant $c_7$ such that 
\[
  \norm{w-v}{N-P,0} 
  \leq c_7 \, \Norm{\Pi - \Sigma}{N,P,0,P} \,.  
\]
For the third term on the right-hand side of \eqref{e.norm-splitting-3}, we note
that $\tilde w = \partial_\mu w$ and $\tilde v = \partial_\mu v$ are
fixed points of the respective contraction maps $\tilde \Pi$ and
$\tilde \Sigma$ of the form
\begin{align}
  \tilde \Pi (\tilde w; u, \mu)
  & = \partial_w \Pi (w(u,\mu); u, \mu) \, \tilde w
      + \partial_\mu \Pi (w(u,\mu); u, \mu) 
    \notag \\
  & \equiv \hat \Pi (\tilde w; u, \mu) 
      + \partial_\mu \Pi (w(u,\mu); u, \mu) \,.
  \label{e.tildePi2}
\end{align}
By part (a), $v,w \in \kC_{N,K+1}(\kU,\kI;\{\kZ_j\})$.  Setting
\[
  r = \max \{ \Norm{\Pi}{N,K+1}, \Norm{\Sigma}{N,K+1} \} \,
      \max_{j=0,\ldots, K} \frac1{1-c'_j}
\]
and $\kV_j = \kB_r^{\kZ_j}(0)$ for $j=0, \dots, K$, we find that, by
Lemma~\ref{l.diff} (a) and Lemma~\ref{l.DiffhatPiSigma} (a),
$\tilde\Pi, \tilde\Sigma \in \kC_{N,K}( \{\kV_j\},\kU,\kI;\{
\kV_j\})$.  Hence, $\tilde \Pi$ and $\tilde \Sigma$ satisfy the
assumptions of the theorem and, by the induction hypothesis, there is some
constant $c_8$, depending polynomially on $(1-c_j')^{-1}$,
$\Norm{\tilde\Pi}{N,K}$, and $\Norm{\tilde\Sigma}{N,K}$ such that
\begin{align}
  \norm{\tilde w - \tilde v}{N-P-1,K-P} 
  & \leq c_8 \, \Norm{\tilde \Pi - \tilde \Sigma}{N-1,K,0,P} 
         \notag \\
  & \leq c_8 \, \Norm{\hat\Pi - \hat\Sigma}{N-1,K,0,P} 
         + c_8 \, \norm{\partial_\mu \Pi \circ w - 
                        \partial_\mu \Sigma \circ v}{N-P-1,K-P}
     \label{e.PiSigma-cm2}
\end{align}
where, in the second inequality, we refer to definition
\eqref{e.hatPiSigma} of $\hat \Pi$ and $\hat \Sigma$ and to
Remark~\ref{r.no-w}.  By Lemma~\ref{l.diff} (a) and
Lemma~\ref{l.DiffhatPiSigma} (a), taking note of Remark~\ref{r.no-w},
the norms
\[ 
  \Norm{\tilde\Pi}{N,K} 
  \leq \Norm{\hat\Pi}{N,K} + 
       \norm{\partial_\mu \Pi \circ w}{N,K}
\]
and $\Norm{\tilde\Sigma}{N,K}$ are polynomials in
$\Norm{\Pi}{N+1,K+1}$, $\Norm{\Sigma}{N+1,K+1}$, $\tnorm{w}{N,K+1}$,
$\tnorm{v}{N,K+1}$, and $r$.  Due to the definition of $r$, these
quantities, and hence the constant $c_8$ in \eqref{e.PiSigma-cm2}, have a
bound that can be chosen as a polynomial in $\Norm{\Pi}{N+1,K+1}$,
$\Norm{\Sigma}{N+1,K+1}$, and $(1-c_j')^{-1}$ for $j=0,\ldots, K$.

The first term in the second line of \eqref{e.PiSigma-cm2} is
estimated by Lemma~\ref{l.DiffhatPiSigma}.  We obtain
\begin{align}
  \Norm{\hat\Pi - \hat\Sigma}{N-1,K,0,P}  
  & \leq c_9 \, \Norm{\Pi-\Sigma}{N,K,0,P}
       + c_9 \, \norm{w-v}{N-P,K-P} 
    \notag \\
  & \leq c_{10} \, \Norm{\Pi-\Sigma}{N,K,0,P} \,.
  \label{e.cmhatPi}
\end{align}
Here $c_9$ is a polynomial in $\Norm{\Pi}{N+1,K+1}\geq
\Norm{\Pi}{N,K}$, $\Norm{\Sigma}{N+1,K+1}$, $\norm{w }{N,K+1} \geq
\norm{w }{N-1,K}$ and $\norm{v}{N,K+1}$, and we have used the
induction hypothesis in the last inequality, with $c_{10}$ a
polynomial in $\Norm{\Pi}{N+1,K+1} $, $\Norm{\Sigma}{N+1,K+1}$, and
$(1-c_j')^{-1}$ for $j=0,\ldots K$.

For the second term on the right-hand side of \eqref{e.PiSigma-cm2}, note that
the hypothesis of the theorem, with $K$ replaced by $K+1$, implies
that
\[
  \partial_\mu \Pi, \partial_\mu \Sigma 
  \in \kC_{N,K+1,0,1}(\{\kW_j\},\kU,\kI;\{\kZ_j\}) \,,
\]
so that Lemma~\ref{l.diff} applied with $S=1$ yields a constant
$c_{11}$ which is a polynomial in $\Norm{\Pi}{N,K+1} \geq
\Norm{\partial_\mu\Pi}{N,K+1,0,1}$, $\Norm{\Sigma}{N,K+1}$,
$\norm{w}{N,K+1}\geq \norm{w}{N,K+1,1}$, and $\norm{v}{N,K+1}$ such
that
\begin{align}
  & \norm{(\partial_\mu \Pi)\circ w -
          (\partial_\mu \Sigma) \circ v}{N-P-1,K-P}  
    \notag   \\
  & \qquad \leq c_{11} \, \Norm{\partial_\mu \Pi - 
                             \partial_\mu \Sigma}{N,K+1,0,P+1}
              + c_{11}\, \norm{w-v}{N-P,K+1-P,1} 
    \notag \\
  & \qquad \leq c_{11} \, \Norm{\Pi - \Sigma}{N,K+1,0,P}  
    + c_{11} \, c_{12} \, \Norm{\Pi - \Sigma}{N,K+1,1,P} 
    \notag \\
  & \qquad  \leq c_{13} \, \Norm{\Pi - \Sigma}{N,K+1,0,P} \,,
  \label{e.cm_DmuPi} 
\end{align} 
where the second term in the third inequality is due to
\eqref{e.conclusion-shift}, and $c_{12}$ and $c_{13}$ depend
polynomially on the required quantities.  Inserting \eqref{e.cmhatPi}
and \eqref{e.cm_DmuPi} into \eqref{e.PiSigma-cm2} then concludes the
inductive step in $K$.
\end{proof}
 
\section*{Acknowledgments}

CW acknowledges funding by the Nuffield Foundation, by the Leverhulme
Foundation, and by EPSRC grant EP/D063906/1.  MO acknowledges support
through the ESF network Harmonic and Complex Analysis and Applications
(HCAA) and through the German Science Foundation (DFG).

%%%%%%%%%%%%%%%%%%%%%%%%%%%%%%%%%%%%%%%%%%%%%%%%%%%%%%%%%%%%%%%%%%%
%%%%%%%%%%%%%%%%%%%%%%%%%%%%%%%%%%%%%%%%%%%%%%%%%%%%%%%%%%%%%%%%%%%

\bibliographystyle{plain}

\end{document}